\newcommand{\inner}[2]{\left \langle #1, #2 \right \rangle}
\newcommand{\iid}{\overset{\text{i.i.d.}}{\sim}}
\newcommand{\NN}{\mathbb{N}}
\newcommand{\PP}{\mathbb{P}}
\newcommand{\RR}{\mathbb{R}}
\newcommand{\EE}{\mathbb{E}}
\newcommand{\ZZ}{\mathbb{Z}}
\newcommand\ind{\mathbbm{1}}
\newcommand\normm[1]{\left \lVert #1 \right \rVert}
\newcommand\normmo[1]{\left \lVert #1 \right \rVert_{\Psi_2}}
\newcommand\normmotwo[1]{\left \lVert #1 \right \rVert_{\Psi_1}}
\newcommand\normmop[1]{\left \lVert #1 \right \rVert_{\mathrm{op}}}
\newcommand{\Var}{\mathrm{Var}}
\newcommand{\Cov}{\mathrm{Cov}}
\newcommand{\Tr}{\mathrm{Tr}}
\newcommand{\rank}{\mathrm{rank}}
\newcommand{\PSD}{\mathrm{PSD}}
\newcommand{\PD}{\mathrm{PD}}
\theoremstyle{plain}
\newtheorem{theorem}{Theorem}
\newtheorem{proposition}{Proposition}
\newtheorem{lemma}{Lemma}
\theoremstyle{plain}
\newtheorem{assumption}{Assumption}
\newtheorem{definition}{Definition} 
\begin{document}

\title{Minimax rates in variance and covariance changepoint testing}
\author{Per August Jarval Moen$^{1}$}

\date{$^1$Department of Mathematics, University of Oslo}

\maketitle

\begin{abstract}
We study the detection of a change in the covariance matrix of $n$ independent sub-Gaussian random variables of dimension $p$. Our first contribution is to show that $\log\log(8n)$ is the exact minimax testing rate for a change in variance when $p=1$, thereby giving a complete characterization of the problem for univariate data. Our second contribution is to derive a lower bound on the minimax testing rate under the operator norm, taking a certain notion of sparsity into account. In the low- to moderate-dimensional region of the parameter space, we are able to match the lower bound from above with an optimal test based on sparse eigenvalues. In the remaining region of the parameter space, where the dimensionality is high, the minimax lower bound implies that changepoint testing is very difficult. As our third contribution, we propose a computationally feasible variant of the optimal multivariate test for a change in covariance, which is also adaptive to the nominal noise level and the sparsity level of the change.\end{abstract}

\section{Introduction}
Changepoint analysis, the problem of detecting or locating one or more distributional changes in a data sequence, has received renewed attention during the last decade. This surge in interest can be primarily attributed to its myriad applications in conjunction with the increasing availability of data. Distributional changes are of interest in several sciences as they can be interpreted as regime shifts. Examples include Economics \citep{econometrica}, Political Science \citep{djuve_patterns_2020}, Peace Research \citep{cunen_statistical_2020} and Climate Research \citep{climateresearch}, to mention a few. Another noteworthy application of changepoint analysis is condition monitoring, such as industrial \citep{industrial_monitoring} or medical \citep{health}, where a changepoint may indicate a critical fault or abnormal behavior.  

Several theoretical contributions regarding the minimax properties of changepoint problems have been made in the past five years. Importantly, these contributions offer critical insight into the inherent difficulty of changepoint problems and serve as natural benchmarks for statistical methods. Most of the existing minimax results in the changepoint literature relate to changes in the mean vector. For instance, \cite{liu_minimax_2021} derive the exact minimax rate of testing for a single (possibly sparse) change in the mean vector with Gaussian noise, while \cite{li2023robust} derive the minimax rate of testing in the more difficult scenario when the tails of the noise are either sub-Weibull or polynomially decaying. Other worthy mentions are the works of \cite{enikeeva_highdimensional}; \cite{cusumandoptimality}; \cite{pilliat} and \cite{verzelen_optimal}, all focusing on changes in the mean vector.

The story is somewhat different for problems involving variance or covariance changes. By a covariance change, we refer to a change in the spatial covariance matrix of the data, which for univariate data is simply a change in the variance. Such problems are fundamentally distinct from mean-change problems, as a covariance change alters the noise level of the data, unlike a change in the mean. To practitioners, covariance shifts are arguably of no less interest than mean shifts, as a covariance change entails a shift in either the variability of individual time series or their inter-dependencies. In finance, for example, the covariance between assets is of critical importance for portfolio construction (see, for instance,  \citealt{finance2} and \citealt{finance3}). In neuroscience, covariance and precision matrices are used quantify the functional connectivity between different regions of the brain, such as in \cite{varoquaux2010}; \cite{cribben2013} and \cite{jeong2016}. Despite the large number of proposed methods for covariance changepoint detection, such as those of \cite{aue2009}; \cite{galeano2014} and \cite{lili2023}, the fundamental difficulty of such problems have yet to be fully explored in the changepoint literature. 

Two exceptions are the recent works of \cite{covariancecusum} and \cite{enikeeva}. The former derive a finite sample minimax lower bound on the signal strength at which no statistical method can reasonably estimate the location of a change in covariance in independent multivariate sub-Gaussian data. The lower bound does not cover the whole region of the parameter space, and does not capture the dependence on the sample size, nor any notion of sparsity. The latter paper considers a different setting with low rank VAR processes, and derives the asymptotic minimax testing rate for a change in the transition matrix of the observed process. In the sequel, we will only compare our results with those of \cite{covariancecusum}, as the setting investigated in \cite{enikeeva} is too different from ours to obtain any meaningful comparisons. To the best of the author's knowledge, no other contributions have been made to characterize the statistical hardness of changepoint problems with a covariance change. 

For stationary models, the inherent statistical hardness of variance and covariance related problems have been much more studied than for changepoint models. The works most related to ours are that of \cite{berthet_optimal} and \cite{cai_optimal_2015}. The former propose a test for a sparse perturbation in the covariance matrix of i.i.d. Gaussian data based on $s$-sparse eigenvalues, and the latter show that this test is minimax rate optimal in the region of the parameter space where the signal strength is small or moderate. The current work builds upon the ideas and techniques in these two papers, although our problem setting, being a two-sample problem, is fundamentally different from theirs. For other works, we refer to \cite{covariance1}, \cite{covariance2} and \cite{covariance3}.

In this paper, we investigate the minimax rate of testing for a change in the covariance matrix of sequence of $n\geq 2$ independent, centered, sub-Gaussian observations of dimension $p$. We first consider the univariate case where $p=1$, in which case the covariance matrix is simply the variance, under the assumption that the variance-rescaled data have bounded densities and the sub-Gaussian norm of the data is of the same order as the variance. Given a sequence of random variables $X_1, \ldots, X_n$, we wish to test the null hypothesis that the sequence of variances $\EE( X_i^2)$ is constant versus the alternative that 
\begin{align}
    \EE (X_i^2) = \begin{cases}
        \sigma_1^2, &\text{if } i \leq t_0,\\
        \sigma_2^2, &\text{if } i \geq t_0+1,
    \end{cases}
\end{align}
for some unknown changepoint location $1\leq t_0 \leq n-1$ and unknown pre- and post-change variances $\sigma_1^2, \sigma_2^2>0$. A novelty of this work is to recognize that the difficulty of this problem is governed by a signal strength parameter $\rho$ that measures the quantity
\begin{align}
    \min(t_0, n - t_0) \left\{ \left( \frac{\left|\sigma_1^2 - \sigma_2^2\right| }{\min( \sigma_1^2,\sigma_2^2)}  \right) \wedge \left(  \frac{\left|\sigma_1^2 - \sigma_2^2\right| }{\min( \sigma_1^2,\sigma_2^2)}\right)^2\right\}. 
\end{align}
Here, the symbol $\wedge$ indicates the minimum value. The first factor $\min(t_0, n-t_0)$ is the effective sample size of the problem, which is equal to the smallest of the number of observations before and after the change. The second factor measures the absolute size of the variance change relative to the smallest of the pre- and post-change variances. We remark that $|\sigma_1^2 - \sigma_2^2| / \min( \sigma_1^2,\sigma_2^2)$ can equivalently be written as the variance ratio $\{(\sigma_1^2 / \sigma_2^2) \vee (\sigma_2^2 / \sigma_1^2)\} -1$. 

The minimax rate of testing, denoted $\rho^*(n)$, is a function of $n$ for which the worst-case sum of Type I and Type II errors of any test can be made arbitrarily large whenever $\rho \leq c \rho^*(n)$, and arbitrarily small for \textit{some} test whenever $\rho \geq C \rho^*(n)$, by choosing the constants $c,C>0$ appropriately. The first contribution of this paper is to show that
\begin{align}
    \rho^*(n) \asymp \log \log(8n), \label{skryt1}
\end{align}
thereby giving a complete characterization of this problem, which is done in Section \ref{univariatesec}.  As an aside, the optimal test constructed to reach the minimax rate is conceptually simple, computationally efficient and easy to implement. 

Next we consider multivariate data, assuming that the variance-rescaled data have bounded densities and the sub-Gaussian norm of the data is of the same order as the variance, now along any one-dimensional subspace of $\RR^p$. Given an independent sequence $X_1, \ldots, X_n$ of $p$-dimensional random variables, we wish to test the null hypothesis that $\EE( X_i X_i^\top)$ is constant and positive definite versus the alternative that 
\begin{align}
    \EE (X_i X_i^\top) = \begin{cases}
        \Sigma_1, &\text{if } i \leq t_0,\\
        \Sigma_2, &\text{if } i \geq t_0+1,
    \end{cases}
\end{align}
for some unknown changepoint location $1\leq t_0 \leq n-1$, where $\Sigma_1 \neq \Sigma_2$ are positive definite and unknown. The size of the change under the alternative is measured in terms of the operator norm of the matrix $\Sigma_1 - \Sigma_2$. Since $\Sigma_1 - \Sigma_2$ is symmetric, its operator norm is given by 
\begin{align}
    \lVert \Sigma_1 - \Sigma_2 \rVert_{\mathrm{op}} = \underset{v \in S^{p-1}}{\sup} \ | v^\top (\Sigma_1 - \Sigma_2 )v |, \label{operatornorm}
\end{align}
where $S^{p-1}$ denotes the Euclidean unit sphere in $\RR^p$. The right-hand side of \eqref{operatornorm} is the largest absolute change in the variance of the data along some one-dimensional axis of variation. If an axis maximizing the change in variance is spanned by a vector with at most $s$ non-zero entries, we say that the change in covariance is $s$-sparse. That is, whenever the $s$-sparse eigenvalue $\lambda_{\max}^s(\Sigma_1 - \Sigma_2)$ agrees with the operator norm of $\Sigma_1 - \Sigma_2$, defined by 
\begin{align}
    \lambda_{\max}^s(A) &= \underset{v \in S^{p-1}_s}{\sup} \ | v^\top A v | \label{firsteigsparse}
\end{align}
for any symmetric matrix $A$, where $S^{p-1}_s$ denotes the subspace of the Euclidean unit sphere containing only vectors with at most $s$ non-zero entries. As an example, the change in covariance is $s$-sparse whenever $\Sigma_2 = \Sigma_1 + v v^\top$, and $v$ has no more than $s$ non-zero entries. Similar notions of sparsity are used by e.g.~\cite{cai_optimal_2015} and \cite{berthet_optimal} for sparse PCA problems. 

As a natural extension to the univariate setting, we define the signal strength parameter $\rho$ to measure the quantity
\begin{align}
    \min(t_0, n - t_0)  \left\{ \left( \frac{\normmop{\Sigma_1 - \Sigma_2}}{\sigma^2 - \normmop{\Sigma_1 - \Sigma_2}}  \right) \wedge \left( \frac{\normmop{\Sigma_1 - \Sigma_2}}{\sigma^2 - \normmop{\Sigma_1 - \Sigma_2}}\right)^2\right\}. \label{sstrength} 
\end{align}
Here, $\sigma^2 = \normmop{\Sigma_1}\vee \normmop{\Sigma_2}$ denotes the largest of the operator norms of the pre- and post-change covariance matrices. As before, the factor $\min(t_0, n-t_0)$ is the effective sample size, while the second factor measures the operator norm of the change relative to the difference between the nominal noise level and the magnitude of the change. As the second contribution of this paper, we show that the worst-case sum of Type I and Type II errors of any test can be made arbitrarily large whenever 
\begin{align}
    \rho \leq c \left\{ s \log \left( \frac{ep}{s}\right) \vee \log \log(8n)\right\}\label{skryt3},\end{align}
by choosing $c>0$ appropriately small. 
This is done in Section \ref{multivariatesec}. 

In a large region of the parameter space we match the minimax lower bound in \eqref{skryt3} from above with an optimal test statistic. Specifically, this is the low- to moderate-dimensional region where the effective sample size $\min(t_0,n-t_0)$ is no less than $C\left\{s\log(ep/s)\vee\log\log(8n)\right\}$, where the constant $C>0$ depends only on the desired testing level. In the high-dimensional region, where the effective sample size is even smaller, the lower bound still has interesting implications, even though the lower bound is not matched and thus not known to be tight. In this region, the lower bound in \eqref{skryt3} implies that the worst-case sum of Type I and Type II errors of any test can be made arbitrarily large whenever
\begin{align}
    \underset{v \in S^{p-1}_s}{\sup} \ \frac{v^\top \Sigma_1 v}{v^\top \Sigma_2 v} \vee \frac{v^\top \Sigma_2 v}{v^\top \Sigma_1 v} &\leq 1 + c \frac{  s \log \left( \frac{ep}{s}\right) \vee \log \log(8n)}{\min(t_0, n-t_0)},\label{implicationskryt}
\end{align} by choosing the constant $c>0$ appropriately small. The left-hand side of \eqref{implicationskryt} is the maximal ratio of the pre- and post-change variances along any subspace of $\RR^p$ spanned by an $s$-sparse vector. When $\min(t_0, n-t_0)$ is constant, the maximal ratio of the pre- and post-change variances must be of order $s \log(ep/s) \vee \log\log(8n)$ for changepoint detection to be possible, corresponding a stringent requirement on the signal strength of the changepoint. 

The optimal test statistic that we use to match the minimax lower bound in \eqref{skryt3} is based on $s$-sparse eigenvalues of empirical covariance matrices, which are known to be NP-hard to compute \citep{nphard}. As the third contribution of this paper, we propose a computationally feasible, near-optimal test for a change in covariance that is adaptive to the sparsity and noise level of the data. The test is constructed via a convex relaxation of the sparse eigenvalue problem using Semidefinite Programming, which has a computational cost that is polynomial in $p$. This is done in Section \ref{adaptivecompsec}.

Proofs of all mathematical results in the paper, along with various auxiliary lemmas, are given in the Supplementary Material. We use the following notation throughout the paper. For any set ${A}$, we let ${A}^\complement$ denote the complement of $A$ with respect to a universe $U \supseteq A$ determined from context. For any $d \in \NN$ we let $[d] = \{1,2, \ldots, d\}$. For any pair $x,y \in \RR$ we let $x \vee y = \max\{x,y\}$ and $x \wedge y = \min\{x,y\}$. Further, we let $\lfloor x \rfloor$ denote the largest integer no larger than $x$ and $\lceil x \rceil$ the smallest integer no smaller than $x$. We also write $x \lesssim y$ if $x \leq C y$ for some context-dependent absolute constant $C>0$, and we write $x \asymp y$ if $y \lesssim x$ holds as well. 
For any vector $v = (v(1), \ldots, v(d))^\top \in \RR^d$ and $p\geq 1$, we let $\normm{v}_p$ denote the $\ell_p$-norm of $v$ given by $\normm{v}_p = \{|v(1)|^p + \ldots + |v(d)|^p\}^{1/p}$. Moreover,  we let $\normm{v}_{\infty} = \max_{i \in [d]}  |v(i)|$ denote the $\ell_{\infty}$ norm of $v$, and we let $\normm{v}_0$ denote the $\ell_0$ "norm" of $v$, which is given by number of non-zero elements in $v$. For any matrix $A \in \RR^{d_1 \times d_2}$ and $p \in \{0\} \cup [1, \infty]$, we let $\normm{A}_p$ denote the $\ell_p$ norm of the vector formed by the entries of $A$. We define the operator norm of $A$ as $\normmop{A} = \sup_{v \in S^{d_2-1}} \normm{Ax}_2$, where $S^{d}$ denotes the Euclidean $d$-sphere. Moreover, if $A$ is symmetric, we define the largest absolute $s$-sparse eigenvalue of $A$ as $\lambda_{\max}^s(A) = \sup_{v \in S^{d_2-1}_s} |v^\top A v|$,  where $S^{d}_s = \{v \in S^{d} ; \normm{v}_0 \leq s\}$ denotes the $s$-sparse Euclidean unit sphere. We also write $A \succcurlyeq 0$ to mean that $A$ is positive semi-definite.
For any random variable $X$ taking values in $\RR$, we let $\normmo{X} = \inf \left\{ t>0: \ \EE \exp(X^2/t^2)\leq 2 \right\}$ denote the Orlicz-$\Psi_2$ norm of $X$, and we say that $X$ is sub-Gaussian if $\normmo{X}< \infty$.
For any $\RR^d$-valued random variable $X$, we let $\normmo{X} = \sup_{v \in S^{d-1}} \normmo{v^\top X}$ denote the Orlicz-$\Psi_2$ norm of $X$, saying that $X$ is sub-Gaussian if $\normmo{X}< \infty$. For any two probability measures $\mathbb{P}$ and $\mathbb{Q}$ on some measurable space $(\mathcal{X}, \mathcal{A})$, we define the total variation distance to be $\text{TV}(\mathbb{P}, \mathbb{Q}) = \sup_{A \in \mathcal{A}} \ \left | \mathbb{P}(A) - \mathbb{Q}(A)\right|$. If $\mathbb{P}$ is absolutely continuous with respect to $\mathbb{Q}$, we define the Chi-square divergence to be $\chi^2( \mathbb{P} \ \| \ \mathbb{Q}) = \int_{\mathcal{X}} ( \mathrm{d} \mathbb{P}^2 / \mathrm{d} \mathbb{Q}^2) \mathrm{d} \mathbb{Q} -1$. 
Throughout we let $\PP$ denote a generic probability measure determined from context, with associated expectation operator $\EE$. When the probability measure is specified, say to be $P$, we let the expectation operator $\EE_P$ and related functionals such as $\Cov_P(\cdot)$ be with respect to $P$.

\section{Results for univariate data}\label{univariatesec}
In this section we assume the observations $X_1, \ldots, X_n$ to have dimension $p=1$, in which case the covariance of $X_i$ is simply the variance.
We make the following assumption on the distribution of the $X_i$. 
\begin{assumption}\label{assunivariate} \phantom{linebreak} 
\begin{enumerate}[label={{\Alph*:}},
  ref={\theassumption.\Alph*}]
    \item \label{assunivariate-a}
        The $X_i$ are independent and mean-zero.
    \item \label{assunivariate-b}
        For some $w>0$ and all $i \in [n]$, the density of $X_i / (\EE X_i^2)^{1/2}$ is bounded above by $w$.
    \item \label{assunivariate-c}
        For some $u>0$ and all $i \in [n]$, it holds that $\normmo{X_i}^2 \leq u \EE X_i^2$.
\end{enumerate}
\end{assumption}

Note that Assumption \ref{assunivariate-b} guarantees that $X_i^2$ is bounded away from zero with high probability, needed for variance estimation, while Assumption \ref{assunivariate-c} ensures that the sub-Gaussian norm of the data is of the same order as the variance. 
 
The testing problem at hand is to determine whether the sequence of variances $\Var(X_i)$ is constant (the null hypothesis) or piece-wise constant with a single changepoint (the alternative hypothesis). Writing $X = (X_1, \ldots, X_n)^\top$, we formalize the testing problem by defining two sets of parameter spaces for $\text{Cov}(X)$, corresponding to each of the hypotheses. Fix any noise level $\sigma>0$. For the null hypothesis, we define
\begin{align}
    \Theta_0(n,\sigma) &= \left\{ \sigma^2 I_{n\times n} \right\},
\end{align}
which is a singleton set. For any $t_0 \in [n-1]$ and $\rho>0$, define
\begin{align}
    \Theta^{(t_0)}(n,\rho) &= \left\{ \vphantom{\frac{\normmop{\Sigma_1 - \Sigma_2}\sqrt{t_0}}{(\sigma^2)}}
                                \text{Diag} \left(  \left\{ V_i \right\}_{\in[n]}\right)  \ \in \RR^{n \times n} \ ; \ V_i = \sigma_1^2>0 \text{ for }  1\leq i \leq t_0,\right. \\
                                & \quad \ \ \ V_i = \sigma_{2}^2>0 \text{ for } t+1\leq i \leq n,\\
                                & \quad \ \ \ \left. \min(t_0, n - t_0)  \left[ \left( \frac{\left| \sigma_1^2 - \sigma_2^2\right| }{ \sigma_1^2 \wedge \sigma^2_2} \right) \wedge \left( \frac{\left| \sigma_1^2 - \sigma_2^2\right| }{ \sigma_1^2 \wedge \sigma^2_2}\right)^2\right] = \rho \right\},\label{altparamspaceuni}
\end{align}

The set $\Theta^{(t_0)}(n,\rho)$ is the space of covariance matrices of $X$ for which there is a change in variance at observation $t_0$ with signal strength $\rho$, and is non-empty for all combinations of $n \geq 2$, $t_0 \in [n-1]$ and $\rho>0$. The signal strength defined in \eqref{altparamspaceuni} depends on the ratio between the absolute change in variance and the smallest of the pre- and post-change variances. Interestingly, the signal strength depends quadratically on this ratio for weak signal strengths, and linearly for strong signal strengths. The signal strength is normalized by the effective sample size $\min(t_0, n-t_0)$ to ensure a common signal strength parameter across different changepoint locations. 
As our alternative hypothesis parameter space, we take
\begin{align}
    \Theta(n, \rho) = \bigcup_{t_0=1}^{n-1} \Theta^{(t_0)}(n,\rho).
\end{align}

We consider the problem of testing between $H_0: \Cov(X) \in \Theta_0(n,\sigma)$ and $H_1 : \Cov(X) \in \Theta(n,\rho)$. Let $\mathcal{P}(w,u)$ denote the class of distributions of $X = (X_1, \ldots, X_n)^\top$ for which the $X_i$ satisfy Assumption \ref{assunivariate} with $w,u>0$, and define
\begin{align}
    \mathcal{P}_0(n,w,u,\sigma) &= \{ P \in \mathcal{P}(w,u) \ ; \ \Cov_P(X) \in \Theta_0(n,\sigma) \}, \label{p0univariate}\\
   \mathcal{P}(n,w,u,\rho) &= \{ P \in \mathcal{P}(w,u) \ ; \ \Cov_P(X) \in \Theta(n,\rho)\}\label{punivariate},
\end{align}
i.e. the sub-classes of $\mathcal{P}(w,u)$ in accordance with the null hypothesis and the alternative hypothesis, respectively.  
Let $\Psi$ denote the class of measurable functions $\psi : \RR^{n} \mapsto \{0,1\}$. We define the minimax testing error $\mathcal{M}(\rho) = \mathcal{M}(\rho, n, \sigma,w,u)$ by 
\begin{align}
    \mathcal{M}(\rho)= \underset{\psi \in \Psi}{\inf} \left\{ \underset{P  \in \mathcal{P}_0(n,w,u,\sigma)}{\sup} \EE_{P} \psi(X) + \underset{P \in \mathcal{P}(n,w,u,\rho)}{\sup} \EE_{P} \left(1 - \psi(X)\right) \right\},
\end{align}
which measures the optimal "worst-case" performance of all testing procedures over the parameter space and the class of distributions satisfying Assumption \ref{assunivariate}. Our goal is to identify the minimax rate of testing, i.e. the boundary between feasible and unfeasible testing problems, defined as follows.
\begin{definition}
Fix any $w,u>0$. We say that $\rho^*(n) = \rho^*(n, \sigma, w,u)$ is the minimax rate of testing if the following conditions are satisfied:
\begin{enumerate}[label = {\Alph*:}]
\item For any $\delta \in (0,1)$, there exists some $c_{\delta}>0$ depending only on $\delta$, such that \\ $\mathcal{M}( c\rho^*(n), n, \sigma, w,u) \geq 1-\delta$ for any $c \in (0, c_{\delta})$.
\item For any $\delta \in (0,1)$, there exists some $C_{\delta}>0$ depending only on $\delta$, such that \\ $\mathcal{M}(C \rho^*(n), n, \sigma,w,u) \leq \delta$ for any $C \in (C_{\delta}, \infty)$.
\end{enumerate}
\end{definition}
The first main result of this paper is the following. 
\begin{theorem}\label{theorem1}
For any fixed $w\geq(2\pi)^{-1/2}$ and any fixed $u>0$ sufficiently large, the minimax rate of testing is given by 
\begin{align} 
\rho^*(n) \asymp \log\log(8n).
\end{align}
\end{theorem}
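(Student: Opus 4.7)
The theorem has two directions: a lower bound $\mathcal{M}(c \log\log(8n)) \geq 1 - \delta$ for small $c$, and a matching upper bound exhibiting a test with error at most $\delta$ whenever $\rho \geq C \log\log(8n)$. The mean-zero Gaussian family satisfies Assumption \ref{assunivariate} with $w = (2\pi)^{-1/2}$ and $u = 8/3$ (both attained by $N(0, \sigma^2)$), so Gaussian constructions are admissible in both $\mathcal{P}_0(n, w, u, \sigma)$ and $\mathcal{P}(n, w, u, \rho)$ under the stated hypotheses on $w, u$. Throughout, write $r := |\sigma_1^2 - \sigma_2^2|/(\sigma_1^2 \wedge \sigma_2^2)$ so that $\rho = \min(t_0, n - t_0) (r \wedge r^2)$; I will separately handle the \emph{weak regime} $r \leq 1$ (where $\rho \asymp \min(t_0, n - t_0) r^2$) and the \emph{strong regime} $r \geq 1$ (where $\rho \asymp \min(t_0, n - t_0) r$).

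\textbf{Lower bound via a dyadic Gaussian mixture.} Let $P_0$ be the law of $(X_i)_{i \leq n} \iid N(0, \sigma^2)$, set $K := \lfloor \log_2(n/2) \rfloor$ and $a_k := 2^k$ for $k = 0, 1, \ldots, K$, and let $Q_k$ be the product measure that is $N(0, \sigma^2)$ on the first $n - a_k$ coordinates and $N(0, \sigma^2(1 + \Delta_k))$ on the remaining $a_k$, with $\Delta_k := c \sqrt{\log\log(8n)/a_k}$ in the weak regime (and a parallel $\Delta_k \asymp c \log\log(8n)/a_k$ covering the strong regime for the finitely many small $k$). Each $Q_k$ lies in $\mathcal{P}(n, w, u, C_0 c^2 \log\log(8n))$ for an absolute constant $C_0$. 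Averaging to $\bar Q := K^{-1} \sum_k Q_k$ and invoking Le Cam together with $\mathrm{TV}(P_0, \bar Q) \leq \tfrac{1}{2}\sqrt{\chi^2(\bar Q \, \| \, P_0)}$, it suffices to control
\begin{align}
    1 + \chi^2(\bar Q \, \| \, P_0) = K^{-2} \sum_{k, k' = 0}^{K} \int \frac{\mathrm{d} Q_k \, \mathrm{d} Q_{k'}}{\mathrm{d} P_0}.
\end{align}
All three measures are products of centered Gaussians, and the coordinatewise identity
\begin{align}
    \int \frac{\phi(x; 0, \sigma^2(1 + \Delta)) \phi(x; 0, \sigma^2(1 + \Delta'))}{\phi(x; 0, \sigma^2)} \, \mathrm{d} x = (1 - \Delta \Delta')^{-1/2},
\end{align}
applied on the $a_{k \wedge k'}$ post-change indices shared by $Q_k$ and $Q_{k'}$, yields $\int \mathrm{d} Q_k \mathrm{d} Q_{k'}/\mathrm{d} P_0 = (1 - \Delta_k \Delta_{k'})^{-a_{k \wedge k'}/2}$. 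Using $-\log(1 - x) \leq 2x$ for $x \leq 1/2$, the log-exponent is bounded by $a_{k \wedge k'} \Delta_k \Delta_{k'} \lesssim c^2 \sqrt{a_{k \wedge k'}/a_{k \vee k'}} \log\log(8n)$, which sums geometrically off the diagonal; after dividing by $K^2 \asymp \log^2 n$, one obtains $\chi^2(\bar Q \, \| \, P_0) \to 0$ for $c$ small enough.

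\textbf{Upper bound via a multi-scale scan.} The test $\psi^\star$ rejects when some normalized local variance-difference statistic exceeds a scale-calibrated threshold. Concretely, for each $k = 0, \ldots, K$ with $a_k = 2^k$ and each $t$ on a dyadic grid with $t, n - t \geq a_k$, define
\begin{align}
    T_{t, k} = \frac{\bigl| a_k^{-1} \sum_{i = t - a_k + 1}^{t} X_i^2 - a_k^{-1} \sum_{i = t + 1}^{t + a_k} X_i^2 \bigr|}{\widehat{V}_{t, k}},
\end{align}
where $\widehat{V}_{t, k}$ is the pooled second-moment estimator on the $2 a_k$-sized window. Reject $H_0$ if $\max_{t, k} T_{t, k} > \tau_k := C_1 \bigl( \sqrt{\log\log(8n)/a_k} \vee \log\log(8n)/a_k \bigr)$. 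Assumption \ref{assunivariate-c} makes $X_i^2$ sub-exponential with Orlicz-$\Psi_1$ norm $\lesssim \sigma^2$, so Bernstein's inequality gives two-regime deviations with exponent $-c(a_k \tau_k^2 \wedge a_k \tau_k) = -c \log\log(8n)$; a union bound over the $O(\log^2 n)$ pairs $(t, k)$ still leaves Type I error $\to 0$ for large $C_1$. Assumption \ref{assunivariate-b} ensures $\widehat{V}_{t, k}$ is bounded below with high probability (via an anti-concentration argument), legitimizing the studentization. For Type II control, given an alternative with change at $t_0$ and signal $\rho \geq C_2 \log\log(8n)$, choose $k^\star$ with $a_{k^\star} \asymp \min(t_0, n - t_0)$ and the nearest admissible $t^\star$: the numerator of $T_{t^\star, k^\star}$ concentrates around its mean $\asymp a_{k^\star} |\sigma_1^2 - \sigma_2^2|$, while $\widehat{V}_{t^\star, k^\star} \asymp \sigma_1^2 \vee \sigma_2^2$, so $T_{t^\star, k^\star} \asymp r$ exceeds $\tau_{k^\star}$ by the signal-strength hypothesis, again via Bernstein.

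\textbf{Main obstacle.} The delicate step is the chi-squared bound: a single diagonal cross-term $(1 - \Delta_k^2)^{-a_k / 2}$ can be as large as $(\log n)^{O(c^2)}$, so the bound $\chi^2 = O(1)$ only emerges because the dyadic spacing $a_k = 2^k$ forces off-diagonal decay $\sqrt{a_{k \wedge k'}/a_{k \vee k'}}$, which after averaging against $K^2 \asymp \log^2 n$ leaves the remainder bounded. The weak/strong regime transition also demands care: the lower bound uses two parallel families $\{Q_k\}$, and the upper bound uses the piecewise-linear threshold $\tau_k$ to mirror the Bernstein crossover between Gaussian and exponential tails; this matching is what yields the unified rate $\log\log(8n)$.
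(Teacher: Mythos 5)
Your lower bound follows the same route as the paper: a Gaussian mixture over dyadic changepoint locations, the coordinatewise identity $\int \phi_{a}\phi_{b}/\phi_{c}\,\mathrm{d}x = (1-\Delta\Delta')^{-1/2}$ for the $\chi^2$ cross-term (this is exactly the univariate case of the paper's Lemma on the $\chi^2$ bound), and geometric off-diagonal decay from the dyadic spacing. The parameterization is mirrored (post-change variance $\sigma^2(1+\Delta_k)$ versus the paper's pre-change $\sigma^2 - \kappa$), which is immaterial. Your sketch is less detailed than the paper's case analysis, but the key mechanism is the same.

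Your upper bound, however, has a genuine flaw. The statistic $T_{t,k} = |a - b| / \widehat V_{t,k}$ with $a, b$ the two window means and $\widehat V_{t,k} = (a+b)/2$ the pooled second-moment estimator satisfies the universal bound
\begin{align}
T_{t,k} = \frac{2|a - b|}{a + b} \leq 2,
\end{align}
since $a, b \geq 0$. Your claim that $T_{t^\star,k^\star} \asymp r$ is therefore only valid for $r \lesssim 1$; in fact $T_{t^\star,k^\star} \asymp |\sigma_1^2-\sigma_2^2|/(\sigma_1^2\vee\sigma_2^2) = r/(1+r)$, which saturates at $1$ as $r \to \infty$. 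Meanwhile your scale-adaptive threshold $\tau_k \asymp \log\log(8n)/a_k$ diverges for small $a_k$. So at small scales the test simply cannot reject, yet the strong regime of the parameter space --- $\min(t_0, n - t_0)$ constant-sized and $r \gtrsim \log\log(8n)$, giving $\rho \geq C\log\log(8n)$ --- requires exactly those scales, since using a larger window dilutes the mean-difference by the factor $\min(t_0,n-t_0)/a_k$. Concretely, with $t_0 = 1$ and $r = C\log\log(8n)$ your test has $T_{t,k} \leq 2 < \tau_k$ at every admissible $(t,k)$ for all large $n$, so it never rejects and has trivial Type II error. The paper's statistic $S_t = \widehat\sigma_{1,t}^2/\widehat\sigma_{2,t}^2 \vee \widehat\sigma_{2,t}^2/\widehat\sigma_{1,t}^2 - 1$ is a ratio and hence unbounded; a single observation with a large relative change drives $S_1$ to order $r$, which exceeds $\lambda\log\log(8n)$ for $C$ large. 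Your test is fixable by replacing the pooled-mean denominator with $\min(a,b)$ (equivalently, reformulating $T$ as a ratio), and the lower-tail control needed for the denominator is precisely what Assumption \ref{assunivariate-b} buys you, but as written the argument does not establish the upper half of Theorem~\ref{theorem1}.
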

Note that the minimax rate of testing in Theorem \ref{theorem1} does not depend on the noise level $\sigma^2$ specified in the alternative hypothesis. In fact, we could also have taken $\cup_{\sigma>0} \Theta_0(\sigma,n)$ as the null hypothesis parameter space, obtaining the same result. We consider the restrictions on $w$ and $u$ to be artefacts of our proofs, as they are needed to ensure that Assumption \ref{assunivariate} is satisfied for Gaussian data, which is the distribution used to prove the lower bound in Theorem \ref{theorem1}. 

Theorem \ref{theorem1} shows an interesting similarity between the change-in-variance problem and the change-in-mean problem for univariate data. The signal strength of a single change in mean from $\mu_1$ to $\mu_2$ at observation $t_0$ with noise level $\sigma^2$ is given by $\min(t_0, n-t_0) (\mu_1 - \mu_2)^2 \sigma^{-2}$ \citep[see][]{liu_minimax_2021}, and the minimax rate of testing is given by $\log\log(8n)$ for this problem as well. 

However, it is important to realize that the above minimax result for the change-in-variance problem is not a consequence or corollary of existing results for mean changes. Indeed, the two problems are fundamentally different, as a change in variance alters the noise level of the data, unlike a change in the mean. In practice, this complicates the construction of a test statistic for a change in variance, as any variance estimate will typically have a variance depending on the very quantity it estimates. For instance, if $t_0$ were known, a natural estimate of the pre-change variance $\sigma_1^2$ would be $\widehat{\sigma}_1^2 = n^{-1}\sum_{i=1}^{t_0} X_i^2$, which has variance $2 \sigma_1^4 /t_0$ whenever the $X_i$ are independently Gaussian.

A noteworthy distinction between the change-in-variance and change-in-mean problems is the forms of the signal strengths. For the change-in-variance problem, the signal strength in  \eqref{altparamspaceuni} depends quadratically on a variance ratio for weak signals, and linearly for strong signals. Meanwhile, the signal strength for the change-in-mean problem does not exhibit such a phase transition. This discrepancy is intuitively explained by the sub-exponential tails of $X_i^2$ being heavier than the tails of $X_i$, the former only exhibiting sub-Gaussian behavior near the center of the distribution.

\subsection{Upper bound}\label{univariateuppersec}
To show Theorem \ref{theorem1}, we first consider the upper bound, for which an optimal test statistic is needed. Since we are only interested in testing for the presence of a changepoint, our approach is to apply location-specific tests over a sparse grid of candidate changepoint locations (as opposed to testing all candidate changepoint locations). If the changepoint location were known to be between $t$ and $n-t+1$, a natural approach would be to use the variance ratio statistic given by
\begin{align}
    S_t &= \frac{\widehat{\sigma}_{1,t}^2}{\widehat{\sigma}_{2,t}^2} \vee \frac{\widehat{\sigma}_{2,t}^2}{\widehat{\sigma}_{1,t}^2} -1,
\end{align}
where $\widehat{\sigma}_{1,t}^2$ and $\widehat{\sigma}_{t,t}^2$ are the empirical variances of $X_1, \ldots, X_t$ and $X_{n-t+1}, \ldots, X_n$, respectively, given by
\begin{align}
    \widehat{\sigma}^2_{1,t} &= \frac{1}{t}\sum_{i=1}^t X_i^2, & \widehat{\sigma}^2_{2,t} = \frac{1}{t} \sum_{i=1}^t X_{n-i+1}^2.
\end{align}
Under the null hypothesis, $S_t$ is a ratio of two sub-Exponential random variables with  Orlicz-1 norms of the same order. Since also $X_i / \{\EE X_i^2\}^{1/2}$ has a bounded density for each $i$, both the lower and upper tails of $\widehat{\sigma}^2_{1,t}$ and $\widehat{\sigma}^2_{2,t}$ can be well controlled, allowing for high-probability control on the tails of $S_t$. Under the alternative hypothesis, $S_t$ grows linearly with the signal strength, since both $\widehat{\sigma}^2_{1,t}$ and $\widehat{\sigma}^2_{2,t}$ are unbiased estimators of the pre- and post-change variances, respectively. A natural testing procedure is therefore to reject the null hypothesis for large values of $S_t$. 

However, the true changepoint location $t_0$ under the alternative is unknown, which motivates the application of the aforementioned testing procedure across a range of values of $t$. To attain the level of precision needed to reach the minimax testing rate, we use the geometrically increasing grid in \cite{liu_minimax_2021}, given by
\begin{align}
    \mathcal{T} = \left\{ 2^0, 2^1, \ldots, 2^{\lfloor \log_2(n/2)\rfloor}\right\}\label{mathcalt},
\end{align}
The main advantage of $\mathcal{T}$ is its small size, having a cardinality of order $\log n$. Still, due to the spacing of $\mathcal{T}$, the signal strength of the data is preserved for at least one $t \in \mathcal{T}$, regardless of the true changepoint location $t_0$. Given a tuning parameter $\lambda>0$, our testing procedure for a change in variance is given by
\begin{align}
    \psi(X) = \psi_{\lambda}(X) = \max_{t \in \mathcal{T}} \ind\left\{       S_t > \lambda \left( \sqrt{\frac{\log\log(8n)}{t}} \vee  \frac{\log\log(8n)}{t}\right)\right\}.\label{univariatetest}
\end{align}

The following theorem gives the theoretical performance of the testing procedure defined in \eqref{univariatetest}. 

\begin{proposition}\label{univariateupper}
    Fix any $\delta \in(0,1)$, $\sigma>0$, $w>0$ and $u>0$. Let $\mathcal{P}_0(n,\sigma,w,u)$ and $\mathcal{P}(n,\rho,w,u)$ be as in \eqref{p0univariate} and \eqref{punivariate}, respectively. Then for some constant $\lambda_0>0$ depending only on $\delta, w$ and $u$, the testing procedure in \eqref{univariatetest} satisfies 
    \begin{align}
       \underset{P  \in \mathcal{P}_0(n,\sigma,w,u)}{\sup} \EE_{P} \psi(X) + \underset{P \in \mathcal{P}(n,\rho,w,u)}{\sup} \EE_{P} \left(1 - \psi(X)\right)  \leq \delta,
    \end{align}
    whenever $\lambda \geq \lambda_0$ and $\rho \geq C \log\log(8n)$, where $C$ is a constant depending only on $\lambda$, $\delta$,  $w$ and $u$.
\end{proposition}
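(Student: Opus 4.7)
The plan is to analyze the Type I and Type II errors of $\psi_\lambda$ separately. Throughout I will exploit that under Assumption \ref{assunivariate}, the normalized variables $Y_i = X_i^2 / \EE(X_i^2)$ are independent and non-negative with mean $1$, sub-exponential norm bounded by a constant multiple of $u$ (by Assumption \ref{assunivariate-c}), and density bounded by $w / \sqrt{y}$ (a direct consequence of Assumption \ref{assunivariate-b}, since $p_{Y_i}(y) = f_{X_i/\sigma_i}(\sqrt{y})/\sqrt{y}$).

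For the Type II error, fix any $P \in \mathcal{P}(n, \rho, w, u)$ with changepoint $t_0$; without loss of generality $t_0 \leq n - t_0$ (since $S_t$ is symmetric in its two halves). Writing $r = |\sigma_1^2 - \sigma_2^2| / (\sigma_1^2 \wedge \sigma_2^2)$, so that $\rho = t_0 (r \wedge r^2)$, the geometric spacing of $\mathcal{T}$ furnishes $t^* \in \mathcal{T}$ with $t_0/2 < t^* \leq t_0$, so that $\widehat{\sigma}^2_{1, t^*}$ and $\widehat{\sigma}^2_{2, t^*}$ are unbiased for $\sigma_1^2$ and $\sigma_2^2$ respectively. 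Bernstein on both $\widehat{\sigma}^2_{j, t^*}/\sigma_j^2$ at relative deviation $\asymp (r \wedge 1)$ succeeds with probability $\geq 1 - \delta/2$ as soon as $t^* (r \wedge r^2) \gtrsim \log\log(8n)$, which is guaranteed by $\rho \geq C\log\log(8n)$ for $C$ sufficiently large. On that event $S_{t^*} \gtrsim r \gtrsim \lambda(\sqrt{\log\log(8n)/t^*} \vee \log\log(8n)/t^*)$, so $\psi(X) = 1$.

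The harder half is the Type I error. I would split $\mathcal{T}$ at $t \asymp \log\log(8n)$. For $t \geq \log\log(8n)$ (``Bernstein regime''), the threshold is $\lambda \sqrt{\log\log(8n)/t}$, and Bernstein applied to both $\widehat{\sigma}^2_{j, t}$ gives a tail of order $\exp(-c\lambda^2 \log\log(8n))$; summing over the at most $\log_2 n$ such $t$ contributes $\lesssim (\log n)^{1 - c\lambda^2}$, which is $\leq \delta/4$ for $\lambda$ large. For $t < \log\log(8n)$ (``polynomial regime'') the threshold $\lambda\log\log(8n)/t$ exceeds $1$, and the key ingredient is the small-ball estimate $\PP(\widehat{\sigma}^2_{j, t}/\sigma^2 \leq \eta) \leq (C_{w,u}\, \eta)^{t/2}$ for $\eta$ small, obtained via exponential Markov from the bound $\EE e^{-c Y_i} \leq w\sqrt{\pi/c}$ that follows from the density bound. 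Combining this lower-tail control on the denominator with a sub-exponential Bernstein upper-tail bound on the numerator, and optimizing the splitting parameter $\eta$, yields $\PP(\widehat{\sigma}^2_{1, t}/\widehat{\sigma}^2_{2, t} > 1 + M) \lesssim (C'_{w,u}/M)^{t/2}$ for $M$ sufficiently large. Substituting $M = \lambda\log\log(8n)/t$ and summing over $t \in \{1, 2, 4, \ldots\} \cap [1, \log\log(8n))$ gives a geometric series dominated by its $t=1$ term, of order $(\lambda \log\log(8n))^{-1/2}$, which is again $\leq \delta/4$ for $\lambda$ sufficiently large.

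The principal obstacle is the slow polynomial tail of $S_t$ at small $t$, most acutely at $t = 1$, where $S_1 + 1 = X_1^2/X_n^2 \vee X_n^2/X_1^2$ has only an $M^{-1/2}$ tail (analogous to an $F_{1,1}$ distribution). A naive union bound over $\mathcal{T}$ fails at this endpoint, and the argument goes through only because (i) the threshold $\lambda \log\log(8n)/t$ is as large as $\lambda\log\log(8n)$ at $t = 1$, (ii) Assumption \ref{assunivariate-b} supplies a polynomial small-ball bound with favourable exponent $t/2$, and (iii) the per-$t$ contributions in the polynomial regime decay so rapidly in $t$ that their sum is dominated by the $t = 1$ term. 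The constant $\lambda_0 = \lambda_0(\delta, w, u)$ is then fixed by requiring each regime's contribution to be at most $\delta/4$.
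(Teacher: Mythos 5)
Your overall strategy---separate Type~I and Type~II analyses, with a density-based small-ball estimate on $\widehat{\sigma}^2_{j,t}$ (from Assumption~\ref{assunivariate-b}) supplementing sub-exponential concentration---matches the paper's ingredients, but your handling of the two errors differs from the paper's and, in one place, has a gap.

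\textbf{Type I error: correct, but more involved than the paper's route.} The paper avoids your regime-split at $t \asymp \log\log(8n)$ by taking a single uniform event $\mathcal{E}_1 = \bigcap_{i,t}\{\widehat{\sigma}^2_{i,t} \geq c_1 \sigma^2\}$ with $c_1 = \delta^2(16w)^{-2}(2e\pi)^{-1}$; Lemma~\ref{lowerchernoff} gives $\PP(\widehat{\sigma}^2_{i,t}\le c_1\sigma^2)\le(\delta/16)^t$, so $\PP(\mathcal{E}_1^{\complement})\le\delta/4$ by geometric summation over $\mathcal{T}$. Together with a two-sided Bernstein event $\mathcal{E}_2$ valid for all $t$, one bounds $S_t = (\widehat{\sigma}^2_{2,t}-\widehat{\sigma}^2_{1,t})/\widehat{\sigma}^2_{1,t}$ directly: Bernstein for the numerator, the $c_1$ floor for the denominator. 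Your polynomial-regime argument works but is heavier, and one detail is off: after optimizing the splitting parameter $\eta$ you get
$\PP(\widehat{\sigma}^2_{1,t}/\widehat{\sigma}^2_{2,t} > 1+M) \lesssim (C'_{w,u}\log M/M)^{t/2}$ rather than $(C'_{w,u}/M)^{t/2}$ (you must take $(1+M)\eta-1 \asymp \log M$ so the sub-exponential term matches the small-ball term); this extra factor does not hurt the final conclusion.

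\textbf{Type II error: a genuine gap.} You invoke Bernstein alone to control $\widehat{\sigma}^2_{2,t^*}/\sigma_2^2$ with probability $\ge 1-\delta/2$, but when $t^*$ is a small constant and $r$ is large---which the hypothesis $\rho\ge C\log\log(8n)$ permits, e.g.\ $t_0=1$ and $r = C\log\log(8n)$---Bernstein fails. For $t^*=1$, $\PP(\widehat{\sigma}^2_{2,1}/\sigma_2^2<\epsilon)$ is a universal constant for any fixed $\epsilon\in(0,1)$, which exceeds $\delta/4$ once $\delta$ is small. You need the same density-based small-ball bound you already deployed in the Type I polynomial regime. The paper does this through the event $\mathcal{E}_3 = \{\widehat{\sigma}^2_{2,t}\geq\sigma_2^2(c_1\vee\cdots)\}$ combined with $\mathcal{E}_4 = \{\widehat{\sigma}^2_{1,t}\leq c_3\sigma_1^2\}$, and then splits on $t \lessgtr (16c_0^2\vee1)\log(8/\delta)$: for small $t$ one has $S_t \geq (c_1/c_3)(\sigma_2^2/\sigma_1^2)-1$, which beats the threshold for $C$ large enough precisely because $c_1$ is bounded away from zero. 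The missing small-ball step is the same tool you used elsewhere, so the fix is routine, but as written your Type~II argument does not close.
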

Some remarks are in order. Firstly, the explicit values of constants in Proposition \ref{univariateupper} can be found in the proof, although we note that these have not been optimized. Secondly, we remark that these constants do not depend on $\sigma$, meaning that the testing procedure in \eqref{univariatetest} is adaptive to the nominal noise level of the data. The test can therefore be used to test the composite null hypothesis $H_0 \ : V \in \cup_{\sigma>0} \Theta_0(n, \sigma)$ versus the alternative $H_1$ without altering the tuning parameter $\lambda$. Lastly, we remark that the testing procedure in \eqref{univariatetest} is easy to implement and has computational cost. Indeed, the test is simply a ratio of empirical variances evaluated over a grid. Since the empirical variances can be computed from cumulative sums of the squared $X_i$, the computational cost of the testing procedure is of order $n$.

\subsection{Lower bound}
The following lower bound matches the upper bound on $\rho^*(n)$ implied by Proposition \ref{univariateupper}. 
\begin{proposition}\label{univariatelower}
    For any fixed $\delta \in (0,1)$, $\sigma>0$, $w \geq (2\pi)^{-1/2}$ and sufficiently large $u>0$, there exists some $c_{\delta}>0$ depending only on $\delta$, such that $\mathcal{M}(\rho) \geq 1-\delta$ whenever $\rho \leq c \log \log(8n)$ and $c \in (0, c_{\delta})$.
\end{proposition}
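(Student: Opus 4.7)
The strategy is Le Cam's method with a uniform mixture over Gaussian alternatives indexed by dyadic candidate changepoints, in the spirit of the lower bound in \cite{liu_minimax_2021}. First I would restrict to the sub-class of $\mathcal{P}(n, w, u, \rho)$ consisting of product-of-Gaussian laws: a centered Gaussian $N(0, V)$ has density bounded by $(2\pi V)^{-1/2}$ and satisfies $\normmo{X}^2 = 8V/3$ under the paper's convention, so Assumption \ref{assunivariate} holds whenever $w \geq (2\pi)^{-1/2}$ and $u \geq 8/3$. Passing to this sub-class can only lower the inner supremum in the definition of $\mathcal{M}$, so it suffices to establish the lower bound for Gaussian data.

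Set $K = \lfloor \log_2(n/2) \rfloor$, $k_\star = \lceil \log_2(\rho \vee 1) \rceil \wedge K$, and $M = K - k_\star + 1$. For each $k \in \{k_\star, \ldots, K\}$, let $t_k = 2^k$ and $\Delta_k = \sqrt{\rho/t_k} \in (0, 1]$, and let $P_k$ be the product law under which $X_1, \ldots, X_{t_k} \iid N(0, \sigma^2(1+\Delta_k))$ and $X_{t_k+1}, \ldots, X_n \iid N(0, \sigma^2)$. Since $t_k \leq n/2$ and $\Delta_k \leq 1$, the signal strength at $P_k$ equals $t_k \Delta_k^2 = \rho$, so $P_k \in \mathcal{P}(n, w, u, \rho)$. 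With $\bar P = M^{-1}\sum_{k=k_\star}^K P_k$, the standard reduction to Bayes risk combined with $\text{TV} \leq \tfrac{1}{2}\sqrt{\chi^2}$ yields
\begin{align}
\mathcal{M}(\rho) \geq 1 - \text{TV}(P_0, \bar P) \geq 1 - \tfrac{1}{2}\sqrt{\chi^2(\bar P \ \| \ P_0)},
\end{align}
so it suffices to show $\sup_{n \geq 2}\chi^2(\bar P \ \| \ P_0) \to 0$ as $c \to 0$.

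Writing $L_k = dP_k/dP_0$ and using that the Gaussian likelihood ratio factorizes over coordinates (with nontrivial factor only on $\{1, \ldots, t_{k \wedge k'}\}$), a direct computation yields the exact formula
\begin{align}
\chi^2(\bar P \ \| \ P_0) + 1 = \frac{1}{M^2}\sum_{k,k' = k_\star}^{K} \EE_{P_0}[L_k L_{k'}], \qquad \EE_{P_0}[L_k L_{k'}] = (1 - \Delta_k \Delta_{k'})^{-t_{k \wedge k'}/2}.
\end{align}
Combined with the key identity $t_{k \wedge k'} \Delta_k \Delta_{k'} = \rho \, 2^{-|k-k'|/2}$ and $-\log(1 - x) \leq 2x$ on $[0, 1/2]$, this gives $\EE_{P_0}[L_k L_{k'}] \leq \exp(\rho\, 2^{-|k-k'|/2})$ as long as $c$ is small enough to ensure $\Delta_k \Delta_{k'} \leq 1/2$.

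The final step is to bound this double sum. Splitting at $d^\star = \lceil 2 \log_2(\rho \vee 2)\rceil$, pairs with $|k-k'| \leq d^\star$ contribute at most $e^\rho$ each (and number at most $M(2d^\star + 1)$), while pairs with $|k-k'| > d^\star$ satisfy $\rho 2^{-|k-k'|/2} \leq 1$ and are controlled using $e^x - 1 \leq 2x$ and the geometric sum $\sum_{d > d^\star} 2^{-d/2}$, giving
\begin{align}
\chi^2(\bar P \ \| \ P_0) \lesssim \frac{(d^\star + 1) e^\rho + \rho}{M}.
\end{align}
When $\rho \leq 1/2$, every pair is in the ``small $x$'' regime and applying $e^x - 1 \leq 2x$ throughout yields the sharper bound $\chi^2 \lesssim \rho/M = c\log\log(8n)/M$; since $\log\log(8n)/M$ is uniformly bounded over $n \geq 2$ (the maximum is attained near $n = 2$ where $M = 1$, and the ratio decreases once $M \asymp \log n$), this gives $\chi^2 \lesssim c$. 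When $\rho > 1/2$, which requires $\log(8n) \geq e^{1/(2c)}$, one has $M \asymp \log(8n)$, $e^\rho = [\log(8n)]^c$, and $d^\star \lesssim \log \rho$, and direct maximization in $\rho \in [1/2, \infty)$ (equivalently in $n$) yields a contribution of order $c\, e^{-1/c}$, negligible for small $c$. Combining the two regimes, $\sup_{n \geq 2}\chi^2(\bar P \ \| \ P_0) \lesssim c$, so choosing $c_\delta$ small enough makes the right-hand side at most $4\delta^2$, proving the proposition. The main technical obstacle is precisely this uniformity step: the diagonal terms $\EE_{P_0}[L_k^2] = e^\rho$ grow exponentially in $\rho$, so a careless bound does not yield an $O(c)$-rate uniformly in $n$; one must simultaneously exploit that diagonals number only $M$ out of $M^2$ pairs and that the off-diagonal terms decay geometrically in $|k-k'|$.
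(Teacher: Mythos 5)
Your overall strategy — a uniform mixture over dyadic changepoint locations, an exact Gaussian chi-square computation, and a geometric-decay bound on the off-diagonal terms — is essentially the same as the paper's proof, up to the cosmetic difference that you invoke Le Cam plus $\mathrm{TV}\le\frac12\sqrt{\chi^2}$ instead of the paper's Lemma~\ref{minimaxlemma}, and you restrict to the quadratic regime of the signal strength (which is a valid simplification).

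However, there is a genuine gap in the step where you claim that ``$c$ small enough ensures $\Delta_k\Delta_{k'}\le 1/2$.'' That claim is false. By your own construction $k_\star=\lceil\log_2(\rho\vee 1)\rceil\wedge K$, so $t_{k_\star}$ is the smallest dyadic value with $t_{k_\star}\ge\rho\vee 1$, and hence $\Delta_{k_\star}^2=\rho/t_{k_\star}\in(1/2,1]$ whenever $\rho\ge 1$. This can happen for arbitrarily small $c$ (one only needs $\log\log(8n)\ge 1/c$). In particular when $\rho$ lands at or just below a power of two, the diagonal term $\EE_{P_0}[L_{k_\star}^2]=(1-\Delta_{k_\star}^2)^{-t_{k_\star}/2}$ is not bounded by $e^{\rho}$ and in the limiting case $\rho=t_{k_\star}$ it is $+\infty$, so the claimed uniform bound $\sup_{n\ge 2}\chi^2(\bar P\,\|\,P_0)\lesssim c$ is not established. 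The root cause is the variance-\emph{increase} parameterisation $\sigma^2(1+\Delta_k)$: the per-coordinate second-moment factor $(1-\Delta_k\Delta_{k'})^{-1/2}$ has a singularity at $\Delta_k\Delta_{k'}=1$ which your grid does not stay bounded away from. The paper avoids this by using a variance-\emph{decrease} alternative $\sigma^2-\kappa$, for which the analogous factor $\{(1+\alpha_1)(1+\alpha_2)\}/(1+\alpha_1+\alpha_2)$ is uniformly finite (see Lemma~\ref{lemmachisqbound}), and by letting $\kappa$ switch between the linear and quadratic signal-strength regimes. Your argument can be repaired without changing its structure: either replace $k_\star$ by $\lceil\log_2(2\rho\vee 1)\rceil\wedge K$, which forces $\Delta_{k_\star}^2\le 1/2$ and hence $\Delta_k\Delta_{k'}\le 1/2$ for all pairs in the grid (one should also check, as you implicitly need, that $2\rho\le 2^{K}$ holds for all $n\ge 2$ once $c$ is below an absolute threshold, so the cap $\wedge K$ is inactive); or switch to the variance-decrease construction. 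With either fix, the remainder of your bounding argument — the split at $d^\star$, the $e^{\rho}=[\log(8n)]^c$ vs.\ $M\asymp\log(8n)$ trade-off, and the geometric tail — goes through.
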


The proof of Proposition \ref{univariatelower} adopts techniques from \cite{liu_minimax_2021}, and the proposition is shown by bounding the Chi square divergence between two probability measures $\PP_0$ and $\PP$ on $\RR^{n}$, where $\PP_0$ is consistent with the null hypothesis and $\PP$ is consistent with the alternative. Specifically, $\PP_0$ is the distribution of $X = (X_1,\ldots, X_n)^\top$ when $X_i~\sim~\text{N}(0, \sigma^2)$ independently for $i=1, \ldots, n$, and $\PP$ is the distribution of $X$ induced by sampling the changepoint location $t_0$ uniformly from $\{2^0,2^1, \ldots, 2^{\lfloor \log_2(n/2)\rfloor}\}$, and conditionally on $t_0$, sampling $X_i \sim\text{N}(0, \sigma^2 - \kappa)$ independently for $i \leq t_0$ and $X_i \sim \text{N}(0, \sigma^2)$ independently for $i>t_0$. The scaling factor $\kappa < \sigma^2$ is chosen so that $\text{Cov}(X | t_0) \in \Theta(n, \rho)$ for all $t_0$.

\section{Results for multivariate data}\label{multivariatesec}
In this section we turn to multivariate data, assuming now that the data $X_1, \ldots, X_n$ are $p$-dimensional with $p\geq 1$. 
Similar to the univariate case, we impose the following assumption on the distribution of the $X_i$.
\begin{assumption}\label{assmultivariate} \phantom{linebreak} 

\begin{enumerate}[label={{\Alph*:}},
  ref={\theassumption.\Alph*}]
    \item \label{assmultivariate-a}
        The $X_i$ are independent and mean-zero.
    \item \label{assmultivariate-b}
        For some $u>0$ and all $i \in [n]$, it holds that $\normmo{X_i}^2 \leq u \lVert {\EE X_i X_i^\top}\rVert_{\mathrm{op}}$.
\end{enumerate}
\end{assumption}

The testing problem at hand is to determine whether the sequence of covariances $\EE X_i X_i^\top$ is constant (the null hypothesis), or piece-wise constant with a single change point (the alternative hypothesis). Write $X = (X_1^\top, \ldots, X_n^\top)^\top$ shorthand for the stacked vector consisting of all the $X_i$. We define two sets of parameter spaces for $\Cov(X)$, corresponding to each of the hypotheses. Let $\PD(p)$ denote the positive definite cone, consisting of all symmetric and positive definite $p \times p$ matrices. As the parameter space for the null hypothesis, for any $\sigma>0$ we define
\begin{align}
    \Theta_0(p,n,\sigma) &= \left\{  \vphantom{\normmop{\Sigma} \leq \sigma^2}  \text{Diag} \left( \left\{ \Sigma \right\}_{i\in[n]}\right)  \ \in \RR^{pn \times pn} \ ; \ \Sigma \in \PD(p) \ , \ \normmop{\Sigma} = \sigma^2 \right\}.\
\end{align}

Recall that we say a symmetric matrix is $s$-sparse if its operator norm agrees with its $s$-sparse largest absolute eigenvalue, i.e. 
\begin{align}
    \underset{v \in S^{p-1}_s}{\sup} \ |v^\top (\Sigma_1 - \Sigma_2)v | &= \underset{v \in S^{p-1}}{\sup} \ |v^\top (\Sigma_1 - \Sigma_2)v |.
\end{align}
Now, let $M(p, s)$ denote the space of $s$-sparse matrices in $\RR^{p\times p}$. For any $t_0 \in [n-1]$,  $s \in [p]$ and $\rho>0$, we define $\Theta^{(t_0)}(p,n, s,\sigma,\rho)$ to be the set given by
\begin{align}
\left\{ \vphantom{\frac{\normmop{\Sigma_1 - \Sigma_2}\sqrt{t_0}}{(\sigma^2)}} \right.  & \text{Diag} \left(  \left\{ V_i \right\}_{i\in [n]}\right)  \ \in \RR^{pn \times pn}, \  V_i = \Sigma_1 \in \PD(p) \text{ for } 1\leq i \leq t_0, \\ 
                                & V_i = \Sigma_{2} \in \PD(p)  \text{ for } t_0+1\leq i \leq n,\  \normmop{\Sigma_1} \vee \normmop{\Sigma_2} \leq \sigma^2,\\
                                 &  \Sigma_1 - \Sigma_2 \in M(p,s),\\
                                & \left. \min(t_0, n - t_0) \left[  \left( \frac{\normmop{\Sigma_1 - \Sigma_2}}{\sigma^2 - \normmop{\Sigma_1 - \Sigma_2}}  \right) \wedge \left( \frac{\normmop{\Sigma_1 - \Sigma_2}}{\sigma^2 - \normmop{\Sigma_1 - \Sigma_2}}\right)^2\right] = \rho \right\}. \ \ \  \label{altparammulti}
\end{align}
The set $\Theta^{(t_0)}(p,n, s, \sigma,\rho)$ contains the space of covariance matrices of $X$ with signal strength $\rho$ and nominal noise level at most $\sigma^2$, for which there is an $s$-sparse change in covariance at time $t_0$, and is non-empty for all combinations of $n\geq 2$, $p\geq 1$, $s \in [p]$,  $t_0 \in [n-1]$, $\sigma>0$, and $\rho>0$. Here, $n$ and $p$ determine the sample size and dimension of the problem, while $\rho$ and $s$ determine the signal strength and the sparsity level, respectively. 
The signal strength is given by the operator norm of the covariance change, normalized by the inverse difference between the noise level of the data and the norm of the covariance change. As in the univariate setting, the signal strength is normalized by the effective sample size $\min(t_0, n-t_0)$, ensuring a common signal strength parameter across different changepoint locations. 
As our alternative hypothesis parameter space, we take
\begin{align}
    \Theta(p,n,s, \sigma, \rho) = \bigcup_{t_0=1}^{n-1} \Theta^{(t_0)}(p,n, s,\sigma, \rho).
\end{align}

We consider the problem of testing between $H_0: \Cov(X) \in \Theta_0(p,n,\sigma)$ and $H_1 : \Cov(X) \in \Theta(p,n,s,\sigma, \rho)$. Let $\mathcal{P}(u)$ denote the set of distributions $X = (X_1^\top, \ldots, X_n^\top)^\top$ for which the $X_i$ satisfy Assumption \ref{assmultivariate} with $u>0$, and define
\begin{align}
    \mathcal{P}_0(p,n,u,\sigma) &= \{ P \in \mathcal{P}(u) \ ; \ \Cov_P(X) \in \Theta_0(p,n,\sigma) \}, \label{p0multivariate}\\
    \mathcal{P}(p,n,s,u,\sigma,\rho) &= \{ P \in \mathcal{P}(u) \ ; \ \Cov_P(X) \in \Theta(p,n,s,\sigma,\rho)\}\label{pmultivariate},
\end{align}
i.e. the sub-classes of $\mathcal{P}(u)$ in accordance with the null hypothesis and the alternative hypothesis, respectively. 
Let $\Psi$ denote the class of measurable functions $\psi : \RR^{pn} \mapsto \{0,1\}$. We define the minimax testing error $\mathcal{M}(\rho) = \mathcal{M}(p,n,s,u,\sigma,\rho)$ by 
\begin{align}
   \mathcal{M}(\rho) = \underset{\psi \in \Psi}{\inf} \left\{ \underset{P \in \mathcal{P}_0(p,n,u,\sigma)}{\sup} \EE_{P} \psi(X) + \underset{P \in \mathcal{P}(p,n,s,u,\sigma,\rho)}{\sup} \EE_{P} \left(1 - \psi(X)\right) \right\}.
\end{align}

\subsection{A lower bound on the minimax testing error}\label{multivariatelowersec}
The following result gives a lower bound on the minimax testing error. 
\begin{proposition}\label{multivariatelowerbound}
    Fix any $\delta \in (0,1)$, $\sigma>0$, $s \in [p]$ and sufficiently large $u>0$. Then there exists some $c_{\delta}>0$ depending only on $\delta$, such that $\mathcal{M}(\rho) \geq 1-\delta$ whenever \\$\rho \leq c \left\{s \log(ep/s) \vee \log \log(8n) \right\}$ and $c \in (0, c_{\delta})$.
\end{proposition}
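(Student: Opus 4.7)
The plan is to prove the two terms of the maximum separately, and then combine. For the $\log\log(8n)$ term I would reduce to the univariate lower bound in Proposition~\ref{univariatelower}. Specifically, let $\mathbb{P}_0$ denote the law of $X_1,\dots,X_n \iid N(0,\sigma^2 I_p)$, and let $\mathbb{P}$ be the mixture obtained by drawing $t_0$ uniformly from the geometric grid $\mathcal{T}$ of \eqref{mathcalt} and, conditionally on $t_0$, generating the first coordinate $X_{i,1}$ from the univariate construction underlying Proposition~\ref{univariatelower} while keeping all other coordinates independently distributed as $N(0,\sigma^2)$. The induced covariance change has rank one along $e_1$ and is thus $s$-sparse for every $s\geq 1$; because only the first coordinate is informative, the Chi-squared bound from the univariate proof transfers verbatim and yields $\mathcal{M}(\rho)\geq 1-\delta$ whenever $\rho\leq c \log\log(8n)$ with $c$ small enough.

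For the $s\log(ep/s)$ term I would apply Le Cam's method with a uniform prior on $s$-sparse rank-one perturbations. Fix $t_0=\lceil n/2\rceil$, let $m=n-t_0$, and for each $S\subset [p]$ with $|S|=s$ set $v_S = s^{-1/2}\sum_{j\in S} e_j$. Take
\begin{align}
    &\mathbb{P}_0 \ : \ X_i \iid N(0,\sigma^2 I_p),\\
    &\mathbb{P} \ : \ S \sim \text{Unif}\,\binom{[p]}{s}, \ X_i \iid N(0,\sigma^2 I_p) \text{ for } i\leq t_0, \ X_i \iid N(0,\sigma^2 I_p - \kappa v_S v_S^\top) \text{ for } i>t_0,
\end{align}
with $\kappa\in(0,\sigma^2)$ chosen so that $m(\kappa/(\sigma^2-\kappa))^2 = \rho$. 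The perturbation $\kappa v_S v_S^\top$ is supported on $S\times S$ and has operator norm and $s$-sparse largest absolute eigenvalue both equal to $\kappa$, so each component of the mixture lies in $\mathcal{P}(p,n,s,u,\sigma,\rho)$ (Assumption~\ref{assmultivariate} holds for Gaussian data with $u$ sufficiently large). Since we aim at $\rho\lesssim s\log(ep/s)$, the ``weak signal'' branch of the minimum is active and $\kappa<\sigma^2/2$ by taking $c$ small.

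The crux is the Chi-squared computation. Applying Sherman--Morrison to $(\sigma^2 I_p - \kappa v_S v_S^\top)^{-1}$ and using the Gaussian quadratic-form identity
\begin{align}
    \mathbb{E}\exp\!\left(-\alpha(v_S^\top Y)^2-\alpha(v_{S'}^\top Y)^2\right) = \left[(1+2\alpha)^2 - 4\alpha^2(|S\cap S'|/s)^2\right]^{-1/2}
\end{align}
for $Y\sim N(0,\sigma^2 I_p)$ with $\alpha=\kappa/\{2(\sigma^2-\kappa)\}$, a direct calculation yields, with $\beta=\kappa/\sigma^2$,
\begin{align}
    1+\chi^2(\mathbb{P}\,\|\,\mathbb{P}_0) = \mathbb{E}_{S,S'}\left(1 - \beta^2(|S\cap S'|/s)^2\right)^{-m/2},
\end{align}
where $S,S'$ are independent uniform draws from $\binom{[p]}{s}$, so that $T=|S\cap S'|$ is hypergeometric with parameters $(p,s,s)$. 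Using $(1-x)^{-m/2}\leq \exp(mx)$ for $x\leq 1/2$ together with standard moment bounds for the hypergeometric distribution, the right-hand side is at most $1+O(\delta^2)$ whenever $m\beta^2\lesssim s\log(ep/s)$, which holds by the choice of $\kappa$ and $c$. Pinsker's inequality then gives $\text{TV}(\mathbb{P},\mathbb{P}_0)\leq \delta$ and hence $\mathcal{M}(\rho)\geq 1-\delta$.

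I expect the main obstacle to be the combinatorial bound on this Chi-squared: the $s\log(ep/s)$ rate has to be extracted from $\mathbb{E}_T\exp(c'mT^2/s^2)$ by truncating at a threshold close to the typical overlap $s^2/p$ and controlling large overlaps via the hypergeometric tail, in the spirit of the sparse detection lower bounds of \cite{berthet_optimal} and \cite{cai_optimal_2015}. A secondary technicality is verifying that the chosen $\kappa$ stays strictly below $\sigma^2$ uniformly over the stipulated range of $\rho$; this is routine once $c$ is taken small, since in the weak-signal regime $\kappa\asymp \sigma^2\sqrt{\rho/m}$ and we only consider $\rho\lesssim s\log(ep/s)\lesssim m$ (the bound being vacuous otherwise).
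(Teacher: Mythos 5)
Your overall strategy --- splitting the $\vee$ into two separate priors and handling each by a $\chi^2$ calculation --- is sound, and your exact formula
\begin{align}
    1+\chi^2(\mathbb{P}\,\|\,\mathbb{P}_0) = \mathbb{E}_{S,S'}\left(1 - \beta^2\langle v_S,v_{S'}\rangle^2\right)^{-m/2}
\end{align}
is in fact the exact identity that the paper's Lemma~\ref{lemmachisqbound} specializes to when the changepoint location is deterministic (note: in your display for the quadratic-form identity the exponent should carry a $\sigma^{-2}$, or equivalently $Y$ should be standardized; the resulting $\chi^2$ formula you write is nonetheless correct). The embedding of the univariate construction in the first coordinate to extract the $\log\log(8n)$ term is also fine. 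The paper instead runs a single unified mixture over the changepoint location, the support, and the signs, and then invokes Lemma~\ref{lemmachisqbound} plus Lemma~\ref{rademacherlemma}; your two-pronged approach is a legitimate alternative organization.

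However, there is a genuine gap in the sparse term. You fix the perturbation direction on each support to be $v_S = s^{-1/2}\sum_{j\in S}e_j$, i.e.\ you randomize only the support and not the signs. Then $\langle v_S,v_{S'}\rangle = T/s$ with $T=|S\cap S'|\sim\mathrm{Hyp}(p,s,s)$, and the $\chi^2$ (under your $(1-x)^{-m/2}\le e^{mx}$ bound) reduces to $\mathbb{E}\exp\{c\log(ep/s)\,T^2/s\}$. This does not stay bounded for all $s\in[p]$: at $s=p$ the support is forced to be $[p]$, so $T=p$ deterministically, $\log(ep/s)=1$, and the quantity equals $e^{cp}$ --- it diverges with $p$. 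More generally, when $s$ is of order $p$ the overlap $T$ is nearly deterministic and of order $s$, killing the randomization, and the $s\log(ep/s)$ rate cannot be extracted from $\mathbb{E}\exp(cT^2/s)$. The paper avoids this by drawing $u(i)\iid\mathrm{Unif}\{-s^{-1/2},s^{-1/2}\}$ on the random support, so $s\langle u_1,u_2\rangle = G(H) = \sum_{i=1}^H R_i$ is a \emph{signed} Rademacher walk of hypergeometric length $H=T$; since $G(H)^2$ scales like $H$ rather than $H^2$, the exponential moment $\mathbb{E}\exp\{\frac{a}{s}\log(ep/s)G(H)^2\}$ stays bounded for small $a$ uniformly in $s\in[p]$ --- this is exactly the content of Lemma~\ref{rademacherlemma} (which is Lemma~1 of \cite{cai_optimal_2015}), and both \cite{berthet_optimal} and \cite{cai_optimal_2015}, which you cite, crucially rely on this sign randomization. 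To repair your proof, replace $v_S$ by a random-sign vector on the random support $S$, after which $s\langle v_S,v_{S'}\rangle$ has the Rademacher-walk law and the needed bound follows verbatim from Lemma~\ref{rademacherlemma}.
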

Proposition \ref{multivariatelowerbound} implies that no changepoint procedure can discriminate between $H_0$ and $H_1$ with a worst-case testing error less than any fixed $\delta\in (0,1)$ whenever the signal strength defined in \eqref{altparammulti} is smaller than $s \log(ep/s) \vee \log \log(8n)$ by a sufficiently small factor. This lower bound is increasing sub-linearly in the sparsity $s$, reducing to $\log(ep)\vee \log\log(8n)$ when $s=1$ and growing to as much as $p \vee \log\log(8n)$ when $s=p$. Note that the constant in Proposition \ref{multivariatelowerbound} does not depend on the noise level $\sigma$. In fact, we could also have taken $\cup_{\sigma>0} \Theta_0(p,n,\sigma)$ as the null hypothesis parameter space and $\cup_{\sigma>0} \Theta(p,n,s,\sigma,\rho)$ as the alternative, obtaining the same result.

In the sequel, we will match the minimax lower bound in Proposition \ref{multivariatelowerbound} from above for all changepoints whose location $t_0$ satisfies $\min(t_0, n-t_0) \gtrsim s \log\left(\frac{ep}{s}\right) \vee \log \log(8n)$, which we call the low- to moderate-dimensional region of the parameter space. For high-dimensional problems where $\min(t_0, n-t_0) \lesssim \log\left(\frac{ep}{s}\right) \vee \log \log(8n)$, Proposition \ref{multivariatelowerbound} has interesting implications, even though the lower bound there is not matched from above. Indeed, due to Lemma \ref{boundimplication} in the Supplementary Material, Proposition \ref{multivariatelowerbound} implies that for any test to discriminate between $H_0$ and $H_1$ with minimax testing error smaller than some $\delta \in (0,1)$, a necessary condition is that
\begin{align}
    \underset{v \in S^{p-1}_s}{\sup} \ \frac{v^\top \Sigma_1 v}{v^\top \Sigma_2 v} \vee \frac{v^\top \Sigma_2 v}{v^\top \Sigma_1 v} &\geq 1 + c \frac{ \gamma}{\Delta} \vee \sqrt{c \frac{ \gamma}{\Delta}},\label{variancedirection}
\end{align}
for some absolute constant $c>0$ depending only on $\delta$, where $\gamma = \gamma(p,n,s) = s \log\left(\frac{ep}{s}\right) \vee \log \log(8n)$ and $\Delta = \Delta(n,t_0) = \min(t_0, n-t_0)$ is the effective sample size. The left-hand side of \eqref{variancedirection} is the largest relative change in the variance of the data along some subspace of $\RR^p$ spanned by an $s$-sparse vector. In the asymptotic regime where $n,s$ or $p$ diverge, the right-hand side of \eqref{variancedirection} diverges if (and only if) $\Delta / \gamma \rightarrow 0$. If $\Delta$ is fixed, the right-hand side of \eqref{variancedirection} grows at the rate of $\gamma = s \log\left(\frac{ep}{s}\right) \vee \log \log(8n)$. In this case, the relative change in variance along some axis of variation must change by a factor of order $\gamma$ for changepoint detection to be possible, amounting to a very stringent requirement, especially when $s$ or $p$ become large. To see this, consider for instance the case where $s=p$, in which the necessary relative change must be of order at least $p$ for any test to successfully discriminate between $H_0$ and $H_1$ over the whole parameter space.

The proof of Proposition \ref{multivariatelowerbound} is similar to that of Proposition \ref{univariatelower}. The main strategy is to bound the Chi square divergence between two probability measures $\PP_0$ and $\PP$ on $\RR^{pn}$, where $\PP_0$ is consistent with the null hypothesis and $\PP$ is consistent with the alternative. Specifically, $\PP_0$ is the distribution of $X= (X_1^\top,\ldots, X_n^\top)^\top$ when $X_i \sim \text{N}_p(0, \sigma^2I)$ independently for all $i$. Moreover, $\PP$ is the mixture distribution of $(X_1, \ldots, X_n)$, which conditional on the changepoint location $t_0$ and the pre-change covariance matrix $\Sigma$ satisfies $X_i \sim \text{N}_p(0,\Sigma)$ independently for $i \leq t_0$ and $X_i \sim \text{N}_p(0, \sigma^2I)$ independently for $i \geq t_0+1$. To generate $t_0$ and $\Sigma$, we sample $t_0$ uniformly from $\{2^0,2^1, \ldots, 2^{\lfloor \log_2(n/2)\rfloor}\}$, and independently sample $\Sigma = \sigma^2 I - \kappa u u^\top$, where $\kappa <\sigma^2$ is deterministic chosen to satisfy the signal strength condition, and $u$ is suitably sampled from the $s$-sparse unit sphere $S^{p-1}_s$. Here, the stochastic choice of $t_0$ contributes with the $\log \log(8n)$ term in the lower bound, while the stochastic choice of $u$ contributes with the $s \log(ep/s)$ term. 

Let us compare Proposition \ref{multivariatelowerbound} to the minimax result in \cite{covariancecusum}. Their result concerns changepoint location estimation, which is a slightly different from the testing problem considered in this paper. \citeauthor{covariancecusum} assume that $\normmop{\Sigma_1 - \Sigma_2}\leq \sigma^2/4$, where $\sigma^2$ denotes the maximum sub-Gaussian norm of the $X_i$. The latter assumption implicitly restricts the effective sample size to satisfy $\min(t_0, n-t_0)\gtrsim p$. Within this setup, they show that consistent estimation of a changepoint location is impossible as long as
\begin{align}
\min(t_0, n-t_0) \frac{\normmop{\Sigma_1 - \Sigma_2}^2}{\sigma^4} \lesssim p.\label{wangbound} 
\end{align}
Under the assumption that $\normmop{\Sigma_1 - \Sigma_2}\leq \sigma^2/4$, the left-hand side of \eqref{wangbound} agrees with the signal strength defined in \eqref{altparammulti}, up to absolute constants. Thus, Proposition \ref{multivariatelowerbound} significantly refines the minimax result in \cite{covariancecusum} by accounting for the dependence on both the sparsity and the sample size. 

Next we compare the lower bound in Proposition \ref{multivariatelowerbound} to the minimax testing rate for changes in the mean vector. Assuming independent data, consider testing the null hypothesis 
\begin{align}
    H_{0,\text{mean}} \ : \ X_i \sim \text{N}_p(\mu, \sigma^2I), \text{ for } i \in [n],\\
    \intertext{where $\mu \in \RR^p$ is unknown, versus the alternative $H_{1,\text{mean}}$ that}
    X_i\sim \begin{cases} \text{N}_p(\mu_1, \sigma^2 I) & \text{for } 1 \leq i \leq t_0,\\
    \text{N}_p(\mu_2, \sigma^2 I) & \text{for } t_0+1 \leq i \leq n,
    \end{cases}
\end{align}
where $\normm{\mu_1 - \mu_2}_0 = s$ is fixed and $\mu_1, \mu_2\in \RR^p$ and $t_0\in [n-1]$ are unknown. Given any $\delta \in (0,1)$, \cite{liu_minimax_2021} show that no test can discriminate between $H_{0,\text{mean}}$ and $H_{1,\text{mean}}$ with minimax testing error less than $\delta$ whenever
\begin{align}
    \Delta \frac{\normm{\mu_1 - \mu_2}^2}{\sigma^2} \lesssim \begin{cases} \sqrt{p \log\log(8n)}, &\text{ if } s \geq \sqrt{p\log\log(8n)},\\
    s \log\left(\frac{ep\log\log(8n)}{s^2}\right) \vee \log\log(8n), &\text{ if } s < \sqrt{p\log\log(8n)}, \end{cases} \ \ \ \ \label{meanminimax}
\end{align}
where $\Delta = \min(t_0, n-t_0)$.
Within the mean-change model, the regime $s < \sqrt{p\log\log(8n)}$ is called a sparse regime, and $s \geq \sqrt{p\log\log(8n)}$ is called a dense regime. In comparison, Proposition \ref{multivariatelowerbound} implies that no test can discriminate between $H_0$ and $H_1$ with minimax testing error less than $\delta$ whenever
\begin{align}
& \Delta \left\{ \left( \frac{\normmop{\Sigma_1 - \Sigma_2}}{\sigma^2 - \normmop{\Sigma_1 - \Sigma_2}}  \right) \wedge \left( \frac{\normmop{\Sigma_1 - \Sigma_2}}{\sigma^2 - \normmop{\Sigma_1 - \Sigma_2}}\right)^2\right\} \\
\lesssim & s \log\left(\frac{ep}{s}\right) \vee \log \log(8n),\label{lowerboundimplication}
\end{align}
where $\Delta$ is defined as above. 
Interestingly, the minimax testing rate for a change in the mean features a phase transition between the dense and sparse regimes, resulting in an elbow effect in the minimax rate in \eqref{meanminimax}. Such a phase transition is not present for the covariance changepoint problem, in which no phase transition occurs with respect to the sparsity $s$. A different phase transition occurs, however, in the effect of the variance ratio in the left-hand side of \eqref{lowerboundimplication}, being linear for large values of the variance ratio and quadratic for smaller values.

\subsection{Upper bound on the minimax testing error}\label{multivariateuppersec}
In this section we present a test statistic that attains the minimax testing rate for low- to moderate-dimensional problems, thereby matching the minimax lower bound from Section \ref{multivariatelowersec} in this region of the parameter space. To keep the presentation simple, we assume both the variance parameter $\sigma^2$ and the sparsity $s$ under the alternative to be known. Note that these assumptions can be removed, which is done in the next subsection. As in Section \ref{univariateuppersec}, our approach is to apply location-specific tests over a grid of candidate changepoint locations. To detect changepoints between $t$ and $n-t+1$, we construct the following statistic: 
\begin{align}
    S_{t,s} &= \lambda_{\max}^s (\widehat{\Sigma}_{1,t} - \widehat{\Sigma}_{2,t}),\label{stsdef}
    \end{align}
    where
    \begin{align}
    \widehat{\Sigma}_{1,t} &= \sum_{i=1}^t X_i X_i^\top, &\widehat{\Sigma}_{2,t} &= \sum_{i=1}^t X_{n-i+1} X_{n-i+1}^\top\label{sigmahats}.
\end{align}

The statistic $S_{t,s}$ measures the absolute difference between the empirical covariance matrices of $X_1, \ldots, X_t$ and $X_{n-t+1}, \ldots, X_n$ in terms of the largest absolute $s$-sparse eigenvalue. When $s=p$, the statistic $S_{t,s}$ is reminiscent of the Covariance CUSUM statistic defined in \cite{covariancecusum}.  Due to the sub-Gaussianity of the $X_i$, high-probability control over the tails of $S_{t,s}$ can be obtained under the null hypothesis using standard concentration inequalities for empirical covariance matrices. Meanwhile, under the alternative hypothesis, $S_{t,s}$ grows linearly with the signal strength, as both $\widehat{\Sigma}_{1,t}$ and $\widehat{\Sigma}_{2,t}$ are unbiased estimators of the pre- and post-change covariances, respectively. A natural testing procedure is therefore to reject the null hypothesis for large values of $S_{t,s}$. To guarantee statistical power over all possible changepoint locations, we apply $S_{t,s}$ over the grid $\mathcal{T}$ given in \eqref{mathcalt}. 

Given a tuning parameter $\lambda>0$, our testing procedure for a change in covariance is given by 
\begin{align}
    \psi(X) &= \psi_{\lambda}(X) = \max_{t \in \mathcal{T}} \ind\left\{       S_{t,s} > \lambda \sigma^2 r(p,n,s,t)\right\},\label{multivariatetest}
    \end{align}
where
\begin{align}
    r(p,n,s,t) &=  \sqrt{\frac{s \log(ep/s) \vee \log\log(8n)}{t}} \vee  \frac{s \log (ep/s) \vee \log\log(8n)}{t}.\label{rdef}
\end{align}

The following theorem gives the theoretical performance of the test in \eqref{multivariatetest}. 
\begin{proposition}\label{multivariateupperbound}
    Fix any $\delta \in (0,1)$, $\sigma>0$, $s \in [p]$ and $u>0$. Let $P_0 \in \mathcal{P}_0(p,n,u,\sigma)$ and $P \in \mathcal{P}(p,n,s,u,\sigma,\rho)$, where $ \rho \geq C \left\{ s \log(ep/s) \vee \log\log(8n)\right\}$ for some $C>0$ and the sets $ \mathcal{P}_0(p,n,u,\sigma)$ and $\mathcal{P}(p,n,s,u,\sigma,\rho)$ are defined in \eqref{p0multivariate} and \eqref{pmultivariate}, respectively. Then there exists some $\lambda_0>0$ depending only on $\delta$ and $u$, such that the testing procedure in \eqref{multivariatetest} with $\lambda \geq \lambda_0$ satisfies
    \begin{align}
         \EE_{P_0} \psi(X) +  \EE_{P} \left(1 - \psi(X)\right) \leq \delta, 
    \end{align}
    as long as $C \geq 8 \lambda^2$ and the distribution $P$ under the alternative additionally satisfies
    \begin{align}
        \min(t_0,n-t_0) \frac{\normmop{\Sigma_1 - \Sigma_2}^2}{\sigma^4}  \geq  C \left\{ s \log(ep/s) \vee \log\log(8n)\right\} \label{prop4condition},
    \end{align}
    where $t_0, \Sigma_1$ and $\Sigma_2$ respectively denote the changepoint location and pre- and post-change covariance matrices of the $X_i$ implied by $P$. 
\end{proposition}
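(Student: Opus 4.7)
The plan is to bound the Type I and Type II errors of $\psi$ separately, both relying on a single high-probability concentration inequality for the $s$-sparse eigenvalue of a centered empirical covariance matrix. That concentration inequality is the backbone of the argument: for any collection of independent, mean-zero, sub-Gaussian vectors $Y_1,\ldots, Y_t$ in $\RR^p$ with $\normmo{Y_i}^2 \leq u\normmop{\EE Y_i Y_i^\top}$ and $\normmop{\EE Y_i Y_i^\top}\leq \sigma^2$, the inequality
\begin{align}
\PP\!\left( \lambda_{\max}^s\!\left( \tfrac{1}{t}\sum_{i=1}^t Y_iY_i^\top - \EE Y_iY_i^\top\right) \geq c\, u\sigma^2\, r(p,n,s,t)\right) \leq \frac{\delta}{6|\mathcal{T}|}
\end{align}
should hold for a sufficiently large constant $c$ depending only on $\delta$ and $u$. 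I would establish this by writing $\lambda_{\max}^s = \sup_{v \in S^{p-1}_s} |v^\top(\cdot)v|$, passing to a $1/4$-net of $S^{p-1}_s$ whose cardinality is $\binom{p}{s}9^s \leq (9ep/s)^s$, applying Bernstein's inequality to the sub-exponential variables $(v^\top Y_i)^2 - \EE(v^\top Y_i)^2$ (whose $\Psi_1$ norm is of order $u\sigma^2$ by Assumption \ref{assmultivariate-b}), and union bounding over the net. The two regimes of the Bernstein tail give rise to the two regimes in $r(p,n,s,t)$: the $\sqrt{\gamma/t}$ term from the sub-Gaussian part and the $\gamma/t$ term from the sub-exponential part, where $\gamma = s\log(ep/s) \vee \log\log(8n)$ and where the $\log\log(8n)$ piece comes from the final union bound over $t \in \mathcal{T}$ (noting $|\mathcal{T}| \lesssim \log n$ and absorbing the logarithm into $\gamma$).

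For Type I control, the observations share a common covariance $\Sigma$ with $\normmop{\Sigma}=\sigma^2$, so $\EE \widehat{\Sigma}_{1,t} = \EE \widehat{\Sigma}_{2,t} = \Sigma$. The subadditivity $\lambda_{\max}^s(A-B) \leq \lambda_{\max}^s(A-\Sigma) + \lambda_{\max}^s(B-\Sigma)$ together with the above concentration bound applied to each of $\widehat{\Sigma}_{1,t}-\Sigma$ and $\widehat{\Sigma}_{2,t}-\Sigma$, and a union bound over $t \in \mathcal{T}$, shows $S_{t,s} \leq \lambda \sigma^2 r(p,n,s,t)$ for all $t$ with probability at least $1-\delta/2$, provided $\lambda \geq \lambda_0$ for a suitable $\lambda_0(\delta,u)$.

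For Type II control, set $\Delta = \min(t_0,n-t_0)$ and pick $t^* \in \mathcal{T}$ with $\Delta/2 \leq t^* \leq \Delta$, which exists by the geometric spacing of $\mathcal{T}$. This choice guarantees that $\widehat{\Sigma}_{1,t^*}$ is built entirely from pre-change observations (so $\EE \widehat{\Sigma}_{1,t^*} = \Sigma_1$) and $\widehat{\Sigma}_{2,t^*}$ entirely from post-change observations ($\EE \widehat{\Sigma}_{2,t^*} = \Sigma_2$). The reverse triangle inequality and the $s$-sparsity of $\Sigma_1-\Sigma_2$ then yield
\begin{align}
S_{t^*,s} \geq \lambda_{\max}^s(\Sigma_1 - \Sigma_2) - \lambda_{\max}^s(\widehat{\Sigma}_{1,t^*} - \Sigma_1) - \lambda_{\max}^s(\widehat{\Sigma}_{2,t^*} - \Sigma_2) = \normmop{\Sigma_1 - \Sigma_2} - \text{error}.
\end{align}
Condition \eqref{prop4condition} with $C \geq 8\lambda^2$ gives $\normmop{\Sigma_1-\Sigma_2} \geq 2\sqrt{2}\lambda\sigma^2 \sqrt{\gamma/\Delta}$, which (since $t^* \asymp \Delta$) exceeds the threshold $\lambda\sigma^2 r(p,n,s,t^*)$ by a factor of at least $2$ in both regimes of $r$. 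The concentration bound applied to the two error terms (each contributing mass $\leq \delta/4$) shows the errors are at most half of $\normmop{\Sigma_1-\Sigma_2}$ with the desired probability, so $\psi(X)=1$ with probability $\geq 1-\delta/2$.

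The main obstacle is the sparse-eigenvalue concentration inequality: one must carefully balance the size of the $1/4$-net of $S^{p-1}_s$ against the Bernstein tail so that both regimes of $r(p,n,s,t)$ emerge with the correct dependence on $s\log(ep/s)$, and then ensure that the subsequent union bound over $\mathcal{T}$ only inflates the bound by the $\log\log(8n)$ term that is already absorbed in $\gamma$. Once this concentration result is in place, the remainder of the argument is straightforward triangle-inequality bookkeeping.
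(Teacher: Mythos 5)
Your overall architecture matches the paper's proof almost exactly: the same sparse-eigenvalue concentration inequality via a $1/4$-net of $S^{p-1}_s$ of cardinality $\binom{p}{s}9^s$ plus Bernstein (this is the paper's Lemma~\ref{covarianceconcentrationlemma}), the same subadditivity for Type~I, the same choice of $t^*\in\mathcal{T}$ with $\Delta/2 \le t^* \le \Delta$ and reverse triangle inequality for Type~II.

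There is one small but real gap in the Type~II step. You write that condition~\eqref{prop4condition} with $C\ge 8\lambda^2$ gives $\normmop{\Sigma_1-\Sigma_2}\ge 2\sqrt{2}\lambda\sigma^2\sqrt{\gamma/\Delta}$, and then assert this exceeds $\lambda\sigma^2 r(p,n,s,t^*)$ by a factor $2$ ``in both regimes of $r$.'' That does not follow by itself: if $t^*<\gamma$, the threshold is $\lambda\sigma^2\gamma/t^*$, which is strictly larger than $\lambda\sigma^2\sqrt{\gamma/t^*}$, and the bound you derived only controls the square-root regime. The missing observation — which the paper supplies explicitly — is that $\Sigma_1,\Sigma_2$ being positive definite with $\normmop{\Sigma_1}\vee\normmop{\Sigma_2}\le\sigma^2$ forces $\normmop{\Sigma_1-\Sigma_2}/\sigma^2\le 1$, so~\eqref{prop4condition} with $C\ge 2$ already implies $t_0\ge C\gamma\ge 2\gamma$, hence $t^*\ge t_0/2\ge\gamma$, which rules out the $\gamma/t^*$ regime altogether. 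You should add this step; without it the ``both regimes'' claim is unjustified, though the fix is one line.
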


Up to absolute constants, Proposition \ref{multivariateupperbound} matches the minimax lower bound in Proposition \ref{multivariatelowerbound} in the low- to moderate-dimensional region of the alternative parameter space where $\min(t_0, n-t_0) \geq  C \left\{ s \log(ep/s) \vee \log\log(8n)\right\}$. Here, $C$ is the constant from Proposition \ref{multivariateupperbound} depending only on the desired testing level $\delta \in (0,1)$ and the ratio between the sub-Gaussian norm and the variance of the $X_i$. Indeed, in this region, Proposition \ref{multivariatelowerbound} implies that the minimax testing error is at least $1-\delta$ whenever
\begin{align}
    \min(t_0, n-t_0) \frac{\normmop{\Sigma_1 - \Sigma_2}^2}{\sigma^4} &\leq c \left\{ s\log(ep/s)\vee \log\log(8n)\right\},
\end{align}
and the constant $c>0$ is sufficiently small.

\subsection{Adaptivity to noise level and sparsity}\label{adaptivesec}
The testing procedure in \eqref{multivariatetest} is minimax rate optimal in a large region of the parameter space, but requires knowledge of the sparsity $s$ and variance parameter $\sigma^2$. In this section we present a modified test statistic that is adaptive to these quantities. Since the test in \eqref{multivariatetest} is only guaranteed to detect an $s$-sparse change in covariance when the effective sample size is of the same order as $\gamma(s) = \gamma(p,n,s) = s \log(ep/s)\vee \log\log(8n)$, a natural estimate of the (scaled) variance of $S_{t,s}$ is given by 
\begin{align}
    \widehat{\sigma}^2_s = \lambda_{\max}^s (\widehat{\Sigma}_{1,\lceil \gamma(s) \rceil}) \wedge \lambda_{\max}^s (\widehat{\Sigma}_{2,\lceil \gamma(s) \rceil}) \label{sigmahatsdef} ,
\end{align}
where $\widehat{\Sigma}_{i,t}$ is defined in \eqref{sigmahats}, for $i=1,2$ and $t \in [n]$. In \eqref{sigmahatsdef}, the first and last $\lceil \gamma(p,n,s)\rceil$ data points contribute to the variance estimate, as no changepoints in these segments can be guaranteed to be detected by our testing procedure anyway. It is therefore in \eqref{sigmahatsdef} implicitly assumed that $n \geq  \gamma(s)$. Given a tuning parameter $\lambda >0$, our adaptive variant of the test in \eqref{multivariatetest} is defined by 
\begin{align} 
    \psi_{\mathrm{adaptive}}(X) &= \psi_{\mathrm{adaptive},\lambda}(X) = \underset{t \in \mathcal{T}}{\max} \ \underset{\substack{s \in \mathcal{S}; \\ \gamma(s) \leq n}}{\max} \ \ind\left\{       S_{t,s} > \lambda \widehat{\sigma}^2_s r(p,n,s,t)\right\}, \ \ \label{adaptivemultivariatetest}
    \end{align}
    where $S_{t,s}$ is given in \eqref{stsdef},  $r(p,n,s,t)$ is given in \eqref{rdef}, $\mathcal{T}$ is given in \eqref{mathcalt} and 
    \begin{align}
    \mathcal{S} &= \left\{ 2^0, 2^1, \ldots, 2^{\lfloor \log_2(p)\rfloor}\right\}\label{mathcals}
\end{align}
is a geometric grid of candidate sparsities. Note that this geometric grid is sufficient to retain power for all possible sparsities $s \in [p]$. Unlike the test in \eqref{multivariatetest},  the test in \eqref{adaptivemultivariatetest} is adaptive to the noise level by using an estimate for the noise level, and is adaptive to the sparsity level by testing over all sparsities in the grid $\mathcal{S}$. 

For the theoretical analysis of the testing procedure in \eqref{adaptivemultivariatetest}, we impose the following assumption, which is slightly stronger than Assumption \ref{assmultivariate}:
\begin{assumption}\label{assmultivariate2} \phantom{linebreak} 

\begin{enumerate}[label={{\Alph*:}},
  ref={\theassumption.\Alph*}]
    \item \label{assmultivariate2-a}
        The $X_i$ are independent and mean-zero.
    \item \label{assmultivariate2-b}
        For some $w>0$, all $i \in [n]$ and all $v \in S^{p-1}$, the random variable \\$v^\top X_i /  \{ \EE (v^\top X_i X_i^\top v)\}^{1/2}$ has a continuous density bounded above by $w$.
    \item \label{assmultivariate2-c}
        For some $u>0$, all $i \in [n]$, and all $v \in S^{p-1}$, we have\\$ \lVert v^\top X_i \rVert_{\Psi_2}^2 \leq u \EE  \{ (v^\top X_i)^2 \}$.
\end{enumerate}
\end{assumption}
Assumption \ref{assmultivariate2-b} ensures that $\widehat{\sigma}^2_s$ is bounded away from zero with high probability, while Assumption \ref{assmultivariate2-c} ensures that the sub-Gaussian norm of the data, along any axis of variation, is of the same order as the variance. In particular, this allows testing procedure to adapt to models in which $\lambda_{\max}^s(\EE X_i X_i^\top)$ is of smaller order than $\lVert{\EE X_i X_i^\top}\rVert_{\mathrm{op}}$. The following theorem gives the theoretical performance of $ \psi_{\mathrm{adaptive}}$. 
\begin{proposition}\label{adaptivemultivariateupperbound}

Assume that $X_1, \ldots, X_n$ satisfy Assumption \ref{assmultivariate2} for some $w,u>0$. Let $\EE_0$ denote the expectation operator with respect to the distribution of $X = (X_1^\top, \ldots, X_n^\top)^\top$ when $\Cov(X_i)$ is constant and positive definite, and let $\EE_1$ denote the expectation operator when $\Cov(X_i) = \Sigma_1$ for $i \leq t_0$ and $\Cov(X_i) = \Sigma_2$ for $i> t_0$, where $\Sigma_1$ and $\Sigma_2$ are some positive definite  
matrices.

    Given any $\delta \in (0,1)$,  there exists some $\lambda_0>0$ depending only on $\delta$, $w$ and $u$, such that the test in \eqref{adaptivemultivariatetest} with $\lambda \geq \lambda_0$ satisfies
    \begin{align}
         \EE_{0} \psi_{\mathrm{adaptive}}(X) +  \EE_{1} \left(1 - \psi_{\mathrm{adaptive}}(X)\right) \leq \delta
    \end{align}
    as long as
    \begin{align}
        \min(t_0,n-t_0)\left( \frac{ \lambda_{\max}^s(\Sigma_1 - \Sigma_2)}{\lambda_{\max}^s(\Sigma_1) \vee \lambda_{\max}^s(\Sigma_2)} \right)^2 \geq  C \left\{ s \log(ep/s) \vee \log\log(8n)\right\}, \ \ \ \label{prop5condition}
    \end{align}
    for some $s \in [p]$, where $C>0$ is some positive constant depending only on $\lambda, \delta, w$ and $u$. 
\end{proposition}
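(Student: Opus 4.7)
The plan is to mirror the argument for Proposition~\ref{multivariateupperbound}, adapting it in two respects: a union bound over the grid $\mathcal{S} \times \mathcal{T}$ to handle adaptivity to $s$, and a two-sided high-probability control of $\widehat{\sigma}^2_s$ to handle adaptivity to the noise level. The central concentration ingredient is
\begin{align*}
\lambda_{\max}^s(\widehat{\Sigma}_{j,t} - \EE \widehat{\Sigma}_{j,t}) \leq C_u\, \lambda_{\max}^s(\EE \widehat{\Sigma}_{j,t}) \cdot r(p,n,s,t), \qquad j \in \{1,2\},
\end{align*}
valid with high probability uniformly over $(t,s) \in \mathcal{T} \times \mathcal{S}$. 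This is essentially the sub-Gaussian covariance concentration already used for Proposition~\ref{multivariateupperbound}, combined with an $\varepsilon$-net on $S^{p-1}_s$ whose metric entropy contributes the $s\log(ep/s)$ factor in $r$; the extra union bound over $\mathcal{T} \times \mathcal{S}$ costs only $\log\log n + \log\log p$, which is absorbed into $\gamma(p,n,s)$ after inflating constants.

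The main obstacle is the matching lower bound on the noise estimator. For each fixed $v \in S^{p-1}_s$, Assumption~\ref{assmultivariate2-b} yields a Paley--Zygmund-type anti-concentration for $(v^\top X_i)^2$: the bounded density of the standardized projection forces a constant fraction of its mass to lie away from zero, giving $v^\top \widehat{\Sigma}^{(j)} v \geq c_{w,u}\,v^\top \EE \widehat{\Sigma}^{(j)} v$ with probability $1 - e^{-c'\gamma(p,n,s)}$. A covering argument over $S^{p-1}_s$ together with a union bound over $s \in \mathcal{S}$ then produces
\begin{align*}
c_{w,u}\, \lambda_{\max}^s(\EE \widehat{\Sigma}^{(j)}) \leq \lambda_{\max}^s(\widehat{\Sigma}^{(j)}) \leq C_{w,u}\, \lambda_{\max}^s(\EE \widehat{\Sigma}^{(j)}), \qquad j \in \{1,2\}, \ s \in \mathcal{S},
\end{align*}
on a high-probability event. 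Concentration from above is standard; the anti-concentration from below uniformly over $s$-sparse directions is the genuinely new ingredient relative to Proposition~\ref{multivariateupperbound}, and is what forces Assumption~\ref{assmultivariate2-b} to be slightly stronger than Assumption~\ref{assmultivariate}.

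Granted these two events, Type~I control is immediate: under the null, $\EE \widehat{\Sigma}_{1,t} = \EE \widehat{\Sigma}_{2,t} = \EE \widehat{\Sigma}^{(j)} = \Sigma$, so the triangle inequality gives $S_{t,s} \leq 2 C_u \lambda_{\max}^s(\Sigma)\, r(p,n,s,t) \leq (2C_u/c_{w,u})\, \widehat{\sigma}^2_s\, r(p,n,s,t)$, and the test does not reject provided $\lambda \geq 2C_u/c_{w,u}$. For Type~II control, let $s^*$ satisfy \eqref{prop5condition} and pick $s \in \mathcal{S}$ with $s^* \leq s \leq 2 s^*$, so that $\lambda_{\max}^s(\Sigma_1 - \Sigma_2) \geq \lambda_{\max}^{s^*}(\Sigma_1 - \Sigma_2)$ and $\gamma(p,n,s) \asymp \gamma(p,n,s^*)$. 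Because the ratio in \eqref{prop5condition} is at most one for PSD $\Sigma_1,\Sigma_2$, the condition forces $\min(t_0, n-t_0) \gtrsim \lceil \gamma(p,n,s) \rceil$, so both segments defining $\widehat{\sigma}^2_s$ consist of iid observations from a single distribution and $\widehat{\sigma}^2_s \leq C_{w,u}(\lambda_{\max}^s(\Sigma_1) \vee \lambda_{\max}^s(\Sigma_2))$. Choosing the largest $t \in \mathcal{T}$ with $t \leq \min(t_0, n-t_0)/2$ gives $t \asymp \min(t_0, n-t_0)$, and \eqref{prop5condition} can be read as $\lambda_{\max}^s(\Sigma_1 - \Sigma_2) \geq C'(\lambda_{\max}^s(\Sigma_1) \vee \lambda_{\max}^s(\Sigma_2))\, r(p,n,s,t)$. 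A triangle inequality and the concentration bound of the first step then yield $S_{t,s} \geq \tfrac{1}{2}\lambda_{\max}^s(\Sigma_1 - \Sigma_2) > \lambda\, \widehat{\sigma}^2_s\, r(p,n,s,t)$ whenever $C$ in \eqref{prop5condition} is chosen large enough relative to $\lambda$, $C_u$ and $C_{w,u}$, so the test rejects on the intersection of the two events above.
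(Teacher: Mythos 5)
Your high-level architecture matches the paper's: concentration of $\lambda_{\max}^s$ of empirical covariance differences (uniformly over $\mathcal{T}\times\mathcal{S}$ via the union bound), plus anti-concentration from the bounded-density assumption to lower bound $\widehat{\sigma}^2_s$, then a triangle inequality for each error type. One small structural inefficiency: for the lower bound on $\widehat{\sigma}^2_s=\lambda_{\max}^s(\widehat{\Sigma}^{(1)})\wedge\lambda_{\max}^s(\widehat{\Sigma}^{(2)})$ you do not need a covering argument over $S_s^{p-1}$ at all --- since $\lambda_{\max}^s$ is a supremum, it suffices to control $v^\top\widehat{\Sigma}^{(j)}v$ at a single fixed $v$ achieving $\lambda_{\max}^s(\Sigma)$, which is what the paper does via Lemma~\ref{lowerchernoff}.

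The real gap is in the Type~II step, in the direction you choose $s$ on the grid. You pick $s\in\mathcal{S}$ with $s^*\le s\le 2s^*$, but this has two problems. First, such an $s$ need not exist: $\mathcal{S}=\{2^0,\dots,2^{\lfloor\log_2 p\rfloor}\}$ tops out below $p$, so for $s^*>2^{\lfloor\log_2 p\rfloor}$ the interval $[s^*,2s^*]$ misses $\mathcal{S}$. Second, and more seriously, you then assert that \eqref{prop5condition} "can be read as" $\lambda_{\max}^s(\Sigma_1-\Sigma_2)\ge C'(\lambda_{\max}^s(\Sigma_1)\vee\lambda_{\max}^s(\Sigma_2))r(p,n,s,t)$. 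Monotonicity gives $\lambda_{\max}^s(\Sigma_1-\Sigma_2)\ge\lambda_{\max}^{s^*}(\Sigma_1-\Sigma_2)$, which is in your favour, but it also gives $\lambda_{\max}^s(\Sigma_i)\ge\lambda_{\max}^{s^*}(\Sigma_i)$ with no reverse control, so the ratio in \eqref{prop5condition} can shrink as $s$ increases past $s^*$. For instance, with $\Sigma_1=\bigl(\begin{smallmatrix}1&0.99\\0.99&1\end{smallmatrix}\bigr)$, $\Sigma_2=\bigl(\begin{smallmatrix}1.1&0.99\\0.99&1.1\end{smallmatrix}\bigr)$, the ratio roughly halves going from $s^*=1$ to $s=2$, so a condition that is marginally satisfied at $s^*$ fails at the grid point you chose. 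The paper avoids this by going the other way: it picks $s\in\mathcal{S}$ with $s_0/2\le s\le s_0$, which shrinks the denominator via monotonicity and then uses Lemma~\ref{tullelemma} to show the numerator loses at most a factor of $4$. You should adopt that choice; with it, the rest of your argument goes through essentially as you outline it.
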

Note that the theoretical performance of the adaptive testing procedure in \eqref{adaptivemultivariatetest} is stronger than that of the testing procedure in \eqref{multivariatetest}, since the condition in \eqref{prop5condition} is weaker than the condition in \eqref{prop4condition}. This is due to the adaptivity of \eqref{adaptivemultivariatetest} to both the sparsity and noise level of the data.

\section{An adaptive and computationally tractable multivariate changepoint procedure} \label{adaptivecompsec}

\subsection{A convex relaxation of the k-sparse eigenvalue problem}
The testing procedures presented in the previous section have provably strong theoretical performance, but are computationally intractable for all but small values of $p$. Indeed, a key ingredient in these tests is the $s$-sparse eigenvalue of the $p\times p$ matrix  $\widehat{\Sigma}_{1,t} - \widehat{\Sigma}_{2,t}$. Seeing as the sparse eigenvalue is NP hard to compute in general \citep{nphard}, the changepoint testing procedures in the previous section are thus prohibitively computationally costly unless $p$ is very small. 

As a remedy to the large computational cost, the $s$-sparse eigenvalue can be approximated by a convex relaxation of the implicit optimization problem via Semidefinite Programming, as is done in \cite{berthet_optimal} for testing for a rank one perturbation in an isotropic covariance matrix.  Following \cite{berthet_optimal} and \cite{convex_relaxation}, we can reformulate the sparse eigenvalue problem as follows. For any symmetric matrix $A \in \RR^{p\times p}$, recalling that 
\begin{align}
    \lambda_{\max}^s(A) &= \underset{v \in S^{p-1}_s}{\sup} \ |v^\top A v|,\label{sparseeigreiteration} 
\end{align}
we can for any $v \in S^{p-1}_s$ make a change of variables $Z = v v^\top$. Noticing that $v^\top A v =  \Tr(A v v^\top) = \Tr(AZ)$,  $\mathrm{Tr}(Z) = 1$, $\mathrm{rank}(Z)=1$,  $\normm{Z}_0 \leq s^2$ and $Z \succcurlyeq0$, the optimization problem in \eqref{sparseeigreiteration} can be rewritten as
\begin{align}
    \lambda_{\max}^s(A) = \underset{Z \in M(p,s)}{\sup} \  | \mathrm{Tr}(AZ) |,\label{sparseeigalt}
\end{align}
where
\begin{align}
    M(p,s) = \left\{ Z \in \RR^{p\times p} \ ; \ Z \succcurlyeq 0, \rank(Z) = 1, \Tr(Z) = 1, \normm{Z}_0 \leq s^2 \right\}.
\end{align}
The optimization problem in \eqref{sparseeigalt} features two sources of non-convexity; the $\ell_0$ constraint and the rank constraint. To relax the $\ell_0$ constraint, notice that $\normm{Z}_1 \leq s$ whenever $\Tr(Z)=1$ and $\normm{Z}_0\leq s^2$, since $\normm{Z}_1 \leq s\normm{Z}_2 = s \Tr(Z)^{1/2}=s \Tr(v^\top v)^{1/2}= s$. Thus, a convex relaxation of the $\ell_0$ constraint is given by $\normm{Z}_1 \leq s$. For the rank constraint, however, there is no obvious convex relaxation. By simply dropping the rank constraint, the optimization problem in \eqref{sparseeigalt} can be relaxed to the semidefinite program
\begin{align}
    \widehat{\lambda}_{\max}^s(A) &= \underset{\substack{Z \in N(p,s) }}{\sup} \  | \mathrm{Tr}(AZ) | \label{conveigdef},
\end{align}
where $ N(p,s) = \{Z \in \RR^{p \times p} \ ; \ Z \succcurlyeq 0, \mathrm{Tr}(Z)=1, \normm{Z}_1\leq s\}$. As a convex problem, it can be solved efficiently using e.g. interior point methods \citep[see][]{Boyd_2004} or first order methods \citep[see][]{bach2010convex}, the latter in polynomial time in $p$.

Due to the results in \cite{bach2010convex}, formalized in Lemma \ref{duallemma} in the Supplementary Material, we have
\begin{align}
    \lambda_{\max}^s (A) \leq  \widehat{\lambda}_{\max}^s(A) \leq s\normm{A}_{\infty},\label{convexcontrol}
\end{align}
which allows for high-probability control over the relaxed sparse eigenvalue of $\widehat{\Sigma}_{1,t} - \widehat{\Sigma}_{2,t}$. 

\subsection{The testing procedure}
We now modify the adaptive changepoint test from Section \ref{adaptivesec} by using the convex relaxation of the $s$-sparse largest absolute eigenvalue. Replacing the $s$-sparse largest absolute eigenvalue in \eqref{stsdef} with the convex relaxation defined in \eqref{conveigdef}, we define
\begin{align}
    \widehat{S}_{t,s} = \widehat{\lambda}_{\max}^s ( \widehat{\Sigma}_{1,t} - \widehat{\Sigma}_{2,t}) \label{hatstsdef},
\end{align}
where the $\widehat{\Sigma}_{i,t}$ for $i=1,2$ are defined in \eqref{sigmahats}. Since we will use \eqref{convexcontrol} to control the behavior of $\widehat{S}_{t,s}$, it suffices to estimate the noise level of $\widehat{S}_{t,s}$ by 
\begin{align}
    \widehat{\sigma}^2_{\mathrm{con}} &= \widehat{\lambda}^1_{\max} (\widehat{\Sigma}_{1,\lceil \log(ep)\rceil})  \wedge \widehat{\lambda}^1_{\max} (\widehat{\Sigma}_{2,\lceil \log(ep)\rceil}) \\
    &= \lVert\widehat{\Sigma}_{1,\lceil \log(ep)\rceil}\rVert_{\infty} \wedge \lVert\widehat{\Sigma}_{2,\lceil \log(ep)\rceil}\rVert_{\infty}\label{convsigmahat},
\end{align}
where $\widehat{\Sigma}_{i,t}$ is defined in \eqref{sigmahats} for $i=1,2$ and $t \in [n]$. 
We remark that $\widehat{\sigma}^2_{\mathrm{con}}$ uses the first and last $\lceil \log(ep) \rceil$ observations to estimate the order of the noise level of $\widehat{S}_{t,s}$, which implicitly restricts the sample size to satisfy $n \geq \log(ep)$. 

We define the computationally tractable variant of the testing procedure in \eqref{adaptivemultivariatetest} by
\begin{align} 
    \widehat{\psi}_{\mathrm{adaptive}}(X) &= \widehat{\psi}_{\mathrm{adaptive},\lambda}(X) = \underset{t \in \mathcal{T}}{\max} \ \underset{s \in \mathcal{S}}{\max} \ \ind\left\{       \widehat{S}_{t,s} > \lambda \widehat{\sigma}^2_{\mathrm{con}} h(p,n,s,t)\right\}, \ \ \ \label{computationaltest}
    \end{align}
    where $\widehat{S}_{t,s}$ is given in \eqref{hatstsdef},  $\mathcal{T}$ is given in \eqref{mathcalt} and $\mathcal{S}$ is given in \eqref{mathcals}, and
    \begin{align}
        h(p,n,s,t) = s \left\{   \sqrt{\frac{\log(ep) \vee \log\log(8n)}{t}} \vee \frac{\log(ep)\vee \log\log(8n)}{t}  \right\} \label{hdef}.
    \end{align}

The following theorem gives the theoretical performance of $ \widehat{\psi}_{\mathrm{adaptive}}$. 
\begin{proposition}\label{comuptationaltestresult}
Assume that $X_1, \ldots, X_n$ satisfy Assumption \ref{assmultivariate2} for some $w,u>0$. Let $\EE_0$ denote the expectation operator with respect to the distribution of $X= (X_1^\top, \ldots, X_n^\top)^\top$ when $\Cov(X_i)$ is constant, and let $\EE_1$ denote the expectation operator when $\Cov(X_i) = \Sigma_1$ for $i \leq t_0$ and $\Cov(X_i) = \Sigma_2$ for $i> t_0$, where $\Sigma_1$ and $\Sigma_2$ are some positive definite matrices.

    Given any $\delta \in (0,1)$,  there exists some $\lambda_0>0$ depending only on $\delta$, $w$ and $u$, such that the test in \eqref{computationaltest} with $\lambda \geq \lambda_0$ satisfies
    \begin{align}
         \EE_{0} \widehat{\psi}_{\mathrm{adaptive}}(X) +  \EE_{1} \left(1 - \widehat{\psi}_{\mathrm{adaptive}}(X)\right) \leq \delta
    \end{align}
    as long as
    \begin{align}
        \min(t_0,n-t_0)\left( \frac{\lambda_{\max}^s(\Sigma_1 - \Sigma_2)}{\lambda_{\max}^s(\Sigma_1) \vee \lambda_{\max}^s(\Sigma_1)} \right)^2 \geq  C s^2 \left\{ \log(ep) \vee \log\log(8n)\right\}, \ \ \ \label{prop6condition}
    \end{align}
    for some $s \in [p]$, where $C>0$ is a constant depending only on $\lambda, \delta, w$ and $u$. 
\end{proposition}

Comparing Proposition \ref{comuptationaltestresult} to Proposition \ref{adaptivemultivariateupperbound}, the computational feasibility of the testing procedure in \eqref{computationaltest} comes at the price of an increased signal strength required for guaranteed detection of a changepoint. To see this, recall that the computationally unfeasible testing procedure in \eqref{multivariatetest} has a signal strength requirement of order $s\log(ep/s) \vee \log\log(8n)$. In comparison, the computationally feasible variant in \eqref{computationaltest} has a signal strength requirement of order $s^2\{\log(ep)\vee \log\log(8n)\}$. However, when $p \geq \log n$, this discrepancy is likely to be unavoidable for any polynomial-time algorithm. Indeed, in the special case where the changepoint location is known, $\Sigma_1 = I$ and $\Sigma_2 = I + \theta v v^\top$ for some $\theta>0$ and $v \in S^{p-1}_s$, the covariance changepoint problem reduces to a sparse PCA problem. For this problem, \cite{Berthet13} showed that if \eqref{prop6condition} were to be improved by a randomized polynomial-time algorithm, then such an algorithm could be modified to detect a Planted Clique in a regime widely conjectured to be computationally infeasible \citep[see also][]{brennan2019}. For the case where $n > e^p$, however, it is unclear whether the rate $s^2 \log\log(8n)$ in \eqref{prop6condition} is improvable for polynomial-time algorithms.

\section{Proofs of main results}\label{proofs}
\subsection{Proofs of results from Section \ref{univariatesec}}
\begin{proof}[Proof of Proposition \ref{univariateupper}]
Fix any $\delta \in (0,1)$,  $\sigma>0$, and $w,u>0$.  
Write $X=(X_1,\ldots,X_n)^\top$ for the random vector in $\RR^{n}$ consisting of the observed data. 
    
We first control the Type I error under the null hypothesis. Assume that the distribution of $X$ belongs to the class $\mathcal{P}_0(n,w,u,\sigma)$ given in Equation \eqref{p0univariate}, so that $\Var(X_i) = \sigma^2$ for all $i \in [n]$. Consider the events 
\begin{align}
    \mathcal{E}_1 &= \bigcap_{i=1,2} \bigcap_{ t \in \mathcal{T}} \left\{ \widehat{\sigma}_{i,t}^2 \geq \sigma^2 c_1  \right\},\\
    \mathcal{E}_2 &= \bigcap_{i=1,2} \bigcap_{ t \in \mathcal{T}} \left\{  \left| \widehat{\sigma}_{i,t}^2 - \sigma^2\right| \leq \sigma^2  c_2 \left( \frac{\log\log(8n)}{t} \vee \sqrt{\frac{\log\log(8n)}{t}}\right)  \right\},
\end{align}
where $c_1 = \delta^2 (16 w)^{-2} (2e\pi)^{-1}$, $c_2 = c_0 \left\{ 2 + \log\left(8/\delta\right)\right\} \vee 1$ and $c_0$ is the absolute constant in Lemma \ref{covarianceconcentrationlemma}, depending only on $u$. Define $\mathcal{E} = \mathcal{E}_1 \cap \mathcal{E}_2$. Then Lemma \ref{eventbound1} implies that $\PP(\mathcal{E}) \geq 1 - \delta/2$. 

We claim that $S_t \leq \lambda \left( \frac{\log\log(8n)}{t} \vee \sqrt{\frac{\log\log(8n)}{t}}\right)$ for all $t \in \mathcal{T}$ on $\mathcal{E}$ whenever $\lambda >0$ is chosen sufficiently large. To see this, observe that
\begin{align}
    \frac{\widehat{\sigma}_{2,t}^2}{\widehat{\sigma}_{1,t}^2} - 1 &= \frac{\widehat{\sigma}_{2,t}^2 - \widehat{\sigma}_{1,t}^2}{\widehat{\sigma}_{1,t}^2}\\
    &\leq \frac{2c_2}{c_1} \left( \frac{\log\log(8n)}{t} \vee \sqrt{\frac{\log\log(8n)}{t}}\right),
\end{align}
on $\mathcal{E}$, for any $t \in \mathcal{T}$. Since the same holds true for ${\widehat{\sigma}_{1,t}^2} / {\widehat{\sigma}_{2,t}^2} - 1$, the claim holds by choosing $\lambda \geq 2c_2 /c_1$. Since the distribution of $X$ was arbitrarily chosen from $\mathcal{P}_0(n,w,u,\sigma)$, it follows that
\begin{align}
    \underset{P \in \mathcal{P}_0(n,w,u,\sigma) }{\sup} \ \EE_P \left(   \psi_{\lambda}(X)   \right) \leq \delta/2, 
\end{align}
for any such $\lambda$.

Next, we consider the Type II error. Let $\lambda$ remain unchanged, and assume now that the distribution of $X$ belongs to the class $\mathcal{P}(n,w,u,\rho)$ given in Equation \eqref{punivariate}, where $\rho \geq C \log\log(8n)$ and
\begin{align}
     C > \left\{4\lambda + 8c_0 \sqrt{\log(8/\delta)}\right\}^2 \vee  \frac{4c_3^2}{c_1^2} \left\{ \lambda + (4c_0 \vee 1) \sqrt{\log(8/\delta)}\right\}^2,
\end{align}
where $c_0$ is the absolute constant from Lemma \ref{covarianceconcentrationlemma}, depending only on $u$, while $c_1$ is as before and $c_3 = 1+ c_0 \log(8/ \delta)$. Note that $C$ only depends on $\lambda, \delta$, $w$ and $u$.  Let $t_0 \in [n-1]$, $\sigma_1^2$ and $\sigma_2^2$ respectively be the changepoint location and pre- and post-change variances of the $X_i$, so that $\Var(X_i) = \sigma_1^2$ for $i\leq t_0$ and $\Var(X_i) = \sigma_2^2$ for $i>t_0$, for some $\sigma_1^2, \sigma_2^2>0$.  Without loss of generality, we may assume that $\sigma^2_2 > \sigma_1^2$ and $t_0 \leq n-t_0$. 

By the definition of $\mathcal{T}$, there exists some $t \in \mathcal{T}$ such that $t_0/2 \leq t \leq t_0$. For this $t$, define the events 
\begin{align}
    \mathcal{E}_3 &= \left\{   \widehat{\sigma}_{2,t}^2 \geq  \sigma_2^2 \left( c_1 \vee  \left[1 -c_0\left\{ \frac{\log(8/\delta)}{t} \vee \sqrt{\frac{\log(8/\delta)}{t}}\right\}  \right]   \right)  \right\}\\
    \mathcal{E}_4 &= \left\{ \widehat{\sigma}_{1,t}^2 \leq \sigma_1^2 \left[ 1 + c_0\left\{ \frac{\log(8/\delta)}{t} \vee \sqrt{\frac{\log(8/\delta)}{t}}\right\} \right] \right\}, 
\end{align}
where $c_0$ and $c_1$ are as before. Now set $\mathcal{E} = \mathcal{E}_3 \cap \mathcal{E}_4$, and note that $\PP\left( \mathcal{E}\right) \geq 1 - \delta/2$ due to Lemma \ref{eventbound2}.  
We first show that $\psi_{\lambda}(X)=1$ on the event $\mathcal{E}$ whenever $t \geq (16 c_0^2 \vee 1) \log(8/\delta)$. On $\mathcal{E}$, we have
\begin{align}
    S_t &\geq \frac{\widehat{\sigma}_{2,t}^2}{\widehat{\sigma}_{1,t}^2} - 1\\
    &\geq \frac{\sigma_2^2 \left(    1 -c_0 \sqrt{\frac{\log(8/\delta)}{t}}  \right)}{ \sigma_1^2 \left( 1 + c_0\sqrt{\frac{\log(8/\delta)}{t}}  \right)} -1\\
    &\geq \frac{\sigma_2^2}{\sigma_1^2} \left( 1 - 2c_0 \sqrt{\frac{\log(8/\delta)}{t}}  \right) -1.
\end{align}
Now, since $\Cov(X) \in \Theta(n,\rho)$, we have that
    \begin{align}
        \frac{\sigma_2^2}{\sigma_1^2} - 1 &\geq C^{1/2}  \left( \frac{\log\log(8n)}{t_0} \vee \sqrt{\frac{\log\log(8n)}{t_0}}\right)\\
        &\geq \frac{1}{2} C^{1/2}  \left( \frac{\log\log(8n)}{t} \vee \sqrt{\frac{\log\log(8n)}{t}}\right)
    \end{align}
    where we used that $t \geq t_0/2$. It follows that
\begin{align}
    \frac{\sigma_2^2}{\sigma_1^2} &\geq 1 +  \frac{C^{1/2}}{2}  \left( \frac{\log\log(8n)}{t} \vee \sqrt{\frac{\log\log(8n)}{t}}\right),
\end{align}
and therefore 
\begin{align}
    S_t &\geq \left(  1 +  \frac{C^{1/2}}{2}  \left( \frac{\log\log(8n)}{t} \vee \sqrt{\frac{\log\log(8n)}{t}}\right)   \right) \left( 1 - 2c_0 \sqrt{\frac{\log(8/\delta)}{t}}  \right) -1.
    \intertext{On the event $\mathcal{E}$. Since $2 c_0 \sqrt{\frac{\log(8/\delta)}{t}} \leq 1/2$, we thus have}
    S_t &\geq  \frac{1}{4} C^{1/2}  \left( \frac{\log\log(8n)}{t} \vee \sqrt{\frac{\log\log(8n)}{t}}\right)  - 2c_0 \sqrt{\frac{\log(8/\delta)}{t}} . 
\end{align}
It follows that 
\begin{align}
    &S_t - \lambda  \left( \frac{\log\log(8n)}{t} \vee \sqrt{\frac{\log\log(8n)}{t}}\right) \\
    &\geq  \left( \frac{1}{4} C^{1/2} - \lambda\right)  \left( \frac{\log\log(8n)}{t} \vee \sqrt{\frac{\log\log(8n)}{t}}\right)  - 2c_0 \sqrt{\frac{\log(8/\delta)}{t}},\\
    &\geq \left( \frac{1}{4} C^{1/2} - \lambda -  2c_0 \sqrt{ \log(8/\delta)}\right) \frac{1}{\sqrt{t}} \\
    &>0,
\end{align}
on the event $\mathcal{E}$, where we used that $C^{1/2}/4 - \lambda>0$ in the second inequality and $C^{1/2} >  4\lambda + 8 c_0 \sqrt{\log(8/\delta)}$ in the third. We conclude that $\psi_{\lambda}(X)=1$ on $\mathcal{E}$ whenever $t \geq (16c_0^2 \vee 1) \log(8/\delta)$. 

Assume now that $t \leq (16c_0^2\vee 1) \log(8/\delta)$. On $\mathcal{E}$, we have
\begin{align}
    S_t  &\geq \frac{\widehat{\sigma}_{2,t}^2}{\widehat{\sigma}_{1,t}^2} - 1\\
    &\geq \frac{c_1}{c_3} \frac{\sigma_2^2}{\sigma_1^2} - 1,
    \end{align}
    where $c_3 = 1+ c_0 \log(8/ \delta)$. As before, we have
    \begin{align}
        \frac{\sigma_2^2}{\sigma_1^2} - 1 &\geq \frac{1}{2} C^{1/2}  \left( \frac{\log\log(8n)}{t} \vee \sqrt{\frac{\log\log(8n)}{t}}\right),
    \end{align}
    and thus
    \begin{align}
     &S_t  - \lambda \left( \frac{\log\log(8n)}{t} \vee \sqrt{\frac{\log\log(8n)}{t}}\right) \\
     &\geq \left (  \frac{c_1}{2c_3}  C^{1/2} - \lambda\right)   \left( \frac{\log\log(8n)}{t} \vee \sqrt{\frac{\log\log(8n)}{t}}\right) -1 \\
     &\geq  \left (  \frac{c_1}{2c_3}  C^{1/2} - \lambda\right)t^{-1/2}  -1 \\
     &\geq \left (  \frac{c_1}{2c_3}  C^{1/2} - \lambda\right)\left\{ (16c_0^2\vee 1) \log(8/\delta) \right\}^{-1/2}  -1  \label{ineq1},
\end{align}
where we in the second inequality used that $\log \log(8n)\geq 1$, and last inequality used that $c_1 C^{1/2} /(2c_3) - \lambda>0$ and $t \leq (16c_0^2\vee 1) \log(8/\delta)$. Due to the definition of $C$, the right-hand side of \eqref{ineq1} is positive, and therefore $\psi_{\lambda}(X)=1$ holds on the event $\mathcal{E}$ also when $t \leq (16 c_0^2 \vee 1) \log(8/\delta)$.

Since the distribution of $X$ was chosen arbitrarily from $\mathcal{P}(n,w,u,\rho)$, it follows that
\begin{align}
    \underset{P \in \mathcal{P}(n,w,u,\rho) }{\sup} \EE_{P} \left(1 - \psi_{\lambda}(X)\right) \leq \delta/2, 
\end{align}
and the proof is complete.

\end{proof}
\begin{proof}[Proof of Proposition \ref{univariatelower}]
The proof of Proposition \ref{univariatelower} adopts techniques from \cite{liu_minimax_2021}. Fix any $\sigma>0, w \geq (2\pi)^{-1/2}$ and $u>0$ sufficiently large. To prove Proposition \ref{univariatelower} we will impose a prior on the alternative hypothesis parameter space and bound the total variation distance between the null distribution and the mixture distribution induced by the prior. Since $w \geq (2\pi)^{-1/2}$ and $u$ is sufficiently large, we can take the distribution of the $X_i$ to be Gaussian. Given any $0 \leq \eta \leq 1$, it then suffices by Lemma \ref{minimaxlemma} to find an absolute constant $c>0$ and a probability measure $\nu$ with $\text{supp}(\nu) \subseteq \Theta(n, c\log\log(8n))$ such that
$\chi^2(f_1,f_0) \leq \eta,$ where $f_0$ and $f_1$ are the densities of $X \sim \text{N}_n(0,V)$ when $V= \sigma^2I$ and $V \sim \nu$, respectively. 

Fix $0<c\leq 1$, to be specified later, and set $\rho = c \log \log(8n)$. Define $\nu$ to be the distribution of $V\in \RR^{n\times n}$ generated according to the following process: 
\begin{enumerate}
    \item Let $\Delta = 2^l$, where $l$ is sampled uniformly from the set \\ $\left\{0, 1, 2,\ldots, {\lfloor \log_2(n/2)\rfloor\}}\right\}$. \label{unisampstep1}
    \item Given $\Delta$, set $V = V(\Delta) = \text{Diag}(\left\{ V_j \right\}_{j=1}^n )$, where $V_j = \sigma_1^2 = \sigma^2  - \kappa$ for $j \leq \Delta$ and $V_j = \sigma_2^2 = \sigma^2$ for $j > \Delta$, where $\kappa = \kappa(\Delta)$ is defined by
    \begin{align}
        \kappa &= \begin{cases}
            \sigma^2 \frac{ \rho }{\Delta + \rho}, &\text{ if } \Delta \leq \rho \\
            \sigma^2 \frac{ \sqrt{ \rho} }{\sqrt{\Delta} + \sqrt{\rho}},&\text{ otherwise.}
            \end{cases}
    \end{align} \label{unisampstep2}
\end{enumerate}
In the sampling process above, $\sigma_1^2 < \sigma_2^2$ are the pre- and post-change variances, respectively. One can easily verify that
\begin{align}
    \min(\Delta, n - \Delta) \left\{ \left( \frac{\left| \sigma_1^2 - \sigma_2^2\right| }{ \sigma_1^2 \wedge \sigma^2_2} \right) \wedge \left( \frac{\left| \sigma_1^2 - \sigma_2^2\right| }{ \sigma_1^2 \wedge \sigma^2_2}\right)^2 \right\}= \rho, \label{snrok}
\end{align}
given any $\Delta$, where $\sigma_1^2 = \sigma_1^2(\Delta)$,  and hence $\text{supp}(\nu) \subseteq \Theta(n,\rho)$. 

Let $f_0$ denote the density of $\text{N}_n(0, \sigma^2I_n)$ and $f_1$ denote the density of the mixture distribution $\text{N}_n(0, V)$ when $V \sim \nu$. For any positive definite matrix $M$, let $\phi_{M}(\cdot)$ denote the density of $\text{N}(0, M)$, so that $f_0(\cdot) = \phi_{\sigma^2I}(\cdot)$ and $f_1(\cdot) = \EE_{V \sim \nu}\left\{ \phi_{V}(\cdot)\right\}$. We have that
\begin{align}
    \chi^2(f_1, f_0) +1 &= \EE_{x \sim f_0} \left\{   \frac{f_1(x)^2}{f_0(x)^2}      \right\}\\
    &=  \EE_{x \sim f_0} \left[   \frac{\left[ \EE_{V \sim \nu}\left\{ \phi_{V}(x)\right\}\right]^2}{\phi^2_{\sigma^2I}(x)}      \right]\\
    &= \EE_{x \sim f_0} \left[   \frac{ \EE_{(V_1, V_2) \sim \nu \otimes \nu}\left\{ \phi_{V_1}(x) \phi_{V_2}(x)\right\}}{\phi^2_{\sigma^2I}(x)}      \right].
\end{align}
Here, $V_1 = V(\Delta_1)$ and $V_2 = V(\Delta_2)$ denote independent samples from $\nu$. By Fubini's Theorem we thus have
\begin{align}
    \chi^2(f_1, f_0) +1 &= \EE_{(V_1, V_2) \sim \nu \otimes \nu} \EE_{x \sim f_0} \left\{   \frac{\phi_{V_1}(x) \phi_{V_2}(x)}{\phi^2_{\sigma^2I}(x)}      \right\}. 
\end{align}
By Lemma \ref{lemmachisqbound}, it follows that
\begin{align}
    \chi^2(f_1, f_0) +1 &\leq \EE_{(V_1, V_2) \sim \nu \otimes \nu} \exp\left[   \frac{1}{2}\left\{  \sqrt{\frac{\Delta_-}{\Delta_+}} (\Delta_+ \alpha_+^2)^{1/2} (\Delta_- \alpha_-^2)^{1/2}  \wedge   \Delta_- \alpha_- \right\} \right],
\end{align}
where $\alpha_i = \kappa_i (\sigma^2-\kappa_i)^{-1}$, using subscripts $+$ and $-$ to indicate the index of the largest and smallest value of the $\Delta_i$, respectively, for $i=1,2$. Here, $(\Delta_i, \kappa_i)$ are there variables from from Step \ref{unisampstep1} and \ref{unisampstep2} for generating $V_i$ for $i=1,2$. Due to the definition of $\kappa$, we have that $\Delta_- \alpha_- = \rho$ whenever $\Delta_- \leq \rho$. If on the other hand $\Delta_- \geq \rho$, then we also have $\Delta_+ \geq \rho$, in which case $\Delta_i \alpha_i^2 =  \rho$ for $i=1,2$. Hence, 
\begin{align}
    \chi^2(f_1, f_0) +1 &\leq \EE_{(V_1, V_2) \sim \nu \otimes \nu} \exp\left(   \frac{1}{2}  \rho \right) \ind\left \{  \Delta_- \leq  \rho \right\}  \label{boundc}
    \\ &+ \EE_{(V_1, V_2) \sim \nu \otimes \nu} \exp\left(   2^{- |l_1 - l_2|/2 -1}  \rho \right),
\end{align}
where $l_i = \log_2(\Delta_i)$ for $i=1,2$. We bound the two terms on the right-hand side of \eqref{boundc} separately. For the first term, we have
\begin{align}
\EE_{(V_1, V_2) \sim \nu \otimes \nu} \exp\left(   \frac{1}{2} \rho \right) \ind\left \{  \Delta_- \leq  \rho \right\} &\leq \log(8n)^{c/2} \PP\left[l_1 \leq \log_2\left\{  c\log\log(8n)\right\} \right]. \ \ \label{boundfirst}
\end{align}
Now choose $c \leq \log(16/\eta)^{-1} /2 \wedge 1$. Then if $\log \log(8n) < c^{-1}$, the right-hand side of \eqref{boundfirst} is zero since $c \log\log(8n)<1$. If we instead have $\log\log(8n)\geq c^{-1}$, the right-hand size of \eqref{boundfirst} is bounded above by 
\begin{align}
&\quad \ \ \ \log(8n)^{c/2} \frac{ 1+ \left \lfloor  \log_2\{ c \log \log(8n) \}  \right \rfloor}{1 + \left \lfloor \log_2(n/2)\right \rfloor}\\
&\leq \left[ \underset{n \in \NN; \ n\geq 2}{\sup} \ \frac{ \log(8n) ^{1/2}\left\{ 1+ \left \lfloor  \log_2  \log \log(8n) \right \rfloor\right\}}{1 + \left \lfloor \log_2(n/2)\right \rfloor} \right]  \log(8n)^{(c-1)/2} \\
&\leq 2 \log(8n)^{(c-1)/2}\\
&= 2 \exp \left\{ \frac{c-1}{2} \log\log(8n)\right\}\\
&\leq 2 \exp \left( \frac{c-1}{2c}\right) \leq 4 \exp \left( - \frac{1}{2c}\right) \leq \eta/4,
\end{align}
where we used the fact that $c \leq \log(16/\eta)^{-1}$ in the last inequality.

Now we bound the second term in the right-hand side of \eqref{boundc}. We have
\begin{align}
    &\EE_{(V_1, V_2) \sim \nu \otimes \nu} \exp\left(   2^{- |l_1 - l_2|/2 -1} \rho \right) \label{bound2} \\
    &= \EE_{l_1, l_2} \exp\left\{   \frac{c}{2} \log \log(8n)\right\} \ind \left\{ \left| l_1 - l_2\right| =0  \right\}\\
    &+ \EE_{l_1, l_2} \exp\left\{   2^{- |l_1 - l_2|/2 -1} c \log \log(8n)\right\} \ind \left\{ 0 < \left| l_1 - l_2\right| \leq (\eta/18) \log\log(8n)  \right\}\\
    &+ \EE_{l_1, l_2} \exp\left\{   2^{- |l_1 - l_2|/2 -1} c \log \log(8n)\right\} \ind \left\{ \left| l_1 - l_2\right| > (\eta/18) \log\log(8n)  \right\}.
\end{align}
Note that $l_1,l_2$ are sampled independently and uniformly from an integer interval with cardinality $a(n) = \left \lfloor \log_2(n/2) \right\rfloor +1$, from which it follows that $\PP(|l_1 - l_2| = x)\leq 2  a(n)^{-1}$ for any $x \in \NN\cup \{0\}$. We claim that the first term at the right-hand side of \eqref{bound2} is bounded above by
\begin{align}
    \left\{ \left( 1 + \frac{\eta}{4} \right)  \PP_{l_1, l_2}(|l_1 - l_2| = 0)\right\} \vee \frac{\eta}{4},
\end{align}
provided that $c$ is lowered (if necessary) to satisfy $c \leq \eta \log(1+\eta/4)/6 \wedge 1$. Indeed, for $n \geq \exp \{ \exp(12/\eta)\}/8$, we have
\begin{align}
    \EE_{l_1, l_2} \exp\left\{   \frac{c}{2} \log \log(8n)\right\} \ind \left\{ \left| l_1 - l_2\right| =0  \right\} &\leq \frac{\log(8n)^{c/2}}{1 + \lfloor \log_2(n/2)\rfloor} \\
    &\leq \frac{\eta}{12} \ \underset{n \in \NN ; \ n\geq 2}{\sup} \ \frac{\log(8n)^{1/2}\log\log(8n)}{1 + \lfloor \log_2(n/2)\rfloor} \\
    &\leq \frac{\eta}{4}.
\end{align}
If we instead have $n < \exp \{ \exp(12/\eta)\}/8$, then 
\begin{align}
    &\EE_{l_1, l_2} \exp\left\{   \frac{c}{2} \log \log(8n)\right\} \ind \left\{ \left| l_1 - l_2\right| =0  \right\} \\
    =& \exp\left\{ \frac{c}{2} \log\log(8n)\right\} \PP_{l_1, l_2}( |l_1 - l_2| = 0)\\
    \leq & \exp\left\{ \frac{6c}{\eta} \right\} \PP_{l_1, l_2}( |l_1 - l_2| = 0)\\
    \leq & \left( 1 + \frac{\eta}{4}\right) \PP_{l_1, l_2}( |l_1 - l_2| = 0),
\end{align}
using that $c \leq \eta \log(1+\eta/4)/6\wedge 1$ in the last inequality.

For the second term at the right-hand side of \eqref{bound2}, using that $c \leq 1$, we have
\begin{align}
    &\EE_{l_1, l_2}  \exp \left( 2^{-|l_1 - l_2|/2-1} c\log \log(8n)  \right) \ind\left\{ 0 < \left| l_1 - l_2\right| \leq (\eta/18) \log \log (8n)\right\} \\
    \leq &  \log(8n)^{1/2} \PP_{l_1, l_2}\left\{ 0 < \left| l_1 - l_2\right| \leq (\eta/18) \log \log (8n) \right\}  \\
    \leq & (\eta / 9) \underset{n \in \NN; n\geq 2}{\sup} \ \frac{ \log \log (8n) \log(8n)^{1/2}}{a(n)} \\
    \leq & \eta/4.
\end{align}
Now let $b_{\eta} = \ \underset{n \geq 2}{\sup} \ \frac{\log \log(8n)}{\log(8n)^{\eta \log(2)/18}} <\infty$. Then for the second term on the right-hand side of \eqref{bound2}, we have
\begin{align}
    &\EE_{l_1, l_2}  \exp \left( 2^{-|l_1 - l_2|/2}\frac{c}{2} \log \log(8n)  \right) \ind\left\{\left| l_1 - l_2\right| > (\eta/18) \log \log (8n)\right\} \\
    \leq &  \exp\left(  \frac{c b_{\eta}}{2} \right) \PP_{l_1, l_2} \left\{\left| l_1 - l_2\right| > (\eta/18) \log \log (8n)\right\}\\
    \leq & \left(  1+ \frac{\eta}{4}\right) \PP_{l_1, l_2} \left\{\left| l_1 - l_2\right| > (\eta/18) \log \log (8n)\right\},
\end{align}
by choosing $c \leq 2\frac{\log(1+\eta/4)}{b_{\eta}}$. It follows that $\chi^2(f_1, f_0) \leq \eta$ whenever
\begin{align}
    c \leq \log(12/\eta)^{-1}/2 \wedge  \eta \log(1+\eta/4)/6  \wedge 2 \frac{\log\left(1 + \eta/4\right)}{b_{\eta}} \wedge 1,
\end{align}
and the proof is complete. 

\end{proof}
\subsection{Proofs of results from Section \ref{multivariatesec}}
\begin{proof}[Proof of Proposition \ref{multivariatelowerbound}]
We use a similar strategy as in the proof of Proposition \ref{univariatelower}. Fix any $\sigma>0$, $w \geq (2\pi)^{-1/2}$ and $u>0$ sufficiently large. Since $w \geq (2\pi)^{-1/2}$ and $u$ is sufficiently large, we can take the distribution of the $X_i$ to be Gaussian. Given any $0<\eta<1$, it suffices by Lemma \ref{minimaxlemma} to find an absolute constant $c>0$ and a probability measure $\nu$ with $\text{supp}(\nu) \subseteq \Theta(p,n, s,\sigma, c \{ s \log(ep/s)\vee \log\log(8n)\})$ such that
$\chi^2(f_1,f_0) \leq  \eta,$ where $f_0$ and $f_1$ are the densities of $X \sim \text{N}_{np}(0, V)$ with some $V \in \Theta_0(p,n,\sigma)$ fixed and $V \sim \nu$, respectively. 

Fix $c\leq 1$, to be specified later, and set $\rho = \rho(p,n,s) = c \{ s \log \left(\frac{ep}{s}\right) \vee \log \log(8n)\}$. Define $\nu$ as the distribution of $V$ generated according to the following process: 
\begin{enumerate}
    \item Let $\Delta = 2^l$, where $l$ is sampled uniformly from the set  \\
    $\left\{0,1,2, \ldots, \lfloor \log_2(n/2)\rfloor\right\}.$  \label{sampstepmulti1}
    \item Independently of $\Delta$, uniformly sample a subset $I \subseteq [p]$ with cardinality $s$. \label{sampstepmulti12}
    \item Independently of $\Delta$, sample $u = (u(1), \ldots, u(p))^\top \in \mathcal{S}^{p-1}_s$, where \\$u(i) \iid \text{Unif}\left\{ -s^{-1/2}, s^{-1/2}\right\}$ for $i \in I$ and $u(i)=0$ otherwise, and $I$ is the subset sampled in Step \ref{sampstepmulti12}.\label{sampstepmulti2}
    \item Given \label{sampstep3}$(\Delta,u)$, set $V = V(\Delta, u) = \text{Diag}( \left\{ V_j \right\}_{j=1}^n )$, where $V_j = \Sigma_1 = \sigma^2 I - \kappa u u^\top$ for $j \leq \Delta$ and $V_j = \Sigma_2 = \sigma^2 I$ for $j>\Delta$, where $\kappa = \kappa(\Delta,u)$ is defined by
    \begin{align}
        \kappa &= \begin{cases}
            \sigma^2 \frac{\rho }{\Delta + \rho}, &\text{ if } \Delta \leq \rho \\
            \sigma^2 \frac{ \sqrt{ \rho} }{\sqrt{\Delta} + \sqrt{\rho}},&\text{ otherwise.}
            \end{cases}
    \end{align} \label{sampstepmulti3}
\end{enumerate}
One can easily check that $\Sigma_1 = \sigma^2 I - \kappa u u^\top$ and $\Sigma_2 = \sigma^2 I$ sampled according to the above process are symmetric and positive definite, both with operator norm $\sigma^2$. Moreover, since $u^\top (\Sigma_2 - \Sigma_1) u = u^\top (\kappa u u ^\top)u = \kappa = \normmop{\Sigma_1 - \Sigma_2}$, and $u \in S^{p-1}_s$, the change in covariance is $s$-sparse. Moreover, the definition of $\kappa$ implies that 
\begin{align}
\min(t_0, n - t_0)  \left \{ \left( \frac{\normmop{\Sigma_1 - \Sigma_2}}{\sigma^2 - \normmop{\Sigma_1 - \Sigma_2}}  \right) \wedge \left( \frac{\normmop{\Sigma_1 - \Sigma_2}}{\sigma^2 - \normmop{\Sigma_1 - \Sigma_2}}\right)^2 \right\} = \rho,
\end{align}
given any $\Delta$, and hence $\text{supp}(\nu) \subseteq \Theta(p,n,s, \sigma,\rho)$.

Let $f_0$ denote the density of $\text{N}_{np}(0, \sigma^2 I)$ and let $f_1$ denote the density of the mixture distribution $\text{N}_{np}(0, V)$ induced by sampling $V \sim \nu$.  As in the proof of Proposition \ref{univariatelower}, Lemma \ref{lemmachisqbound} implies that
\begin{align}
    &\chi^2(f_1, f_0) +1 \\
    &\leq \EE_{(V_1, V_2) \sim \nu \otimes \nu} \exp\left[   \frac{1}{2} \inner{u_1}{u_2}^2 \left\{  \sqrt{\frac{\Delta_-}{\Delta_+}} (\Delta_+ \alpha_+^2)^{1/2} (\Delta_- \alpha_-^2)^{1/2}  \wedge   \Delta_- \alpha_- \right\} \right],
\end{align}
where $\alpha_i = \kappa_i (\sigma^2-\kappa_i)^{-1}$, using subscripts $+$ and $-$ to indicate the index of the largest and smallest value of the $\Delta_i$, respectively, for $i=1,2$. Here, $(\Delta_i, \kappa_i)$ are there variables from from Step \ref{unisampstep1} and \ref{unisampstep2} for generating $V_i$ for $i=1,2$.

If $\rho = c \log\log(8n)$, one can apply the same steps as in the proof of Proposition \ref{univariatelower} to obtain $\chi^2(f_1, f_0) \leq \eta$ by choosing $c$ sufficiently small (depending only on $\eta$), since $\inner{u_1}{u_2}^2 \leq 1$. On the other hand, if $\rho =  c s \log \left( ep/s\right)$, observe that $\Delta_- \alpha_- = \rho$ whenever $\Delta_- \leq \rho$ and $\Delta_i \alpha_i^2 =  \rho$ for $i=1,2$ otherwise, as $\Delta_- \geq \rho$ implies $\Delta_+ \geq \rho$. It follows that
\begin{align}
    \chi^2(f_1, f_0) +1 &\leq \EE_{(V_1, V_2) \sim \nu \otimes \nu} \exp\left(  \frac{1}{2} \inner{u_1}{u_2}^2 \rho \right)\\
     &= \EE_{u_1, u_2}  \exp \left\{ \frac{c}{2s}  \log \left( \frac{ep}{s}\right) s^2 \inner{u_1}{u_2}^2\right\}.
\end{align}

The distribution of $s\inner{u_1}{u_2}$ equals that of $\sum_{i=1}^H R_i$, where the $R_i$ are independent Rademacher random variables and $H$ is a Hypergeometric random variable, independent of the $R_i$,  with parameters $(p,s,s)$. By lowering $c$ if necessary, it follows from Lemma \ref{rademacherlemma} that $ \chi^2(f_1, f_0)< \eta$, and the proof is complete. 

\end{proof}

\begin{proof}[Proof of Proposition \ref{multivariateupperbound}]
    Fix any $\delta \in (0,1)$, $\sigma>0$, $s \in [p]$ and $u>0$. 
    Write $X = (X_1^\top, \ldots, X_n^\top)^\top$ for the random vector in $\RR^{pn}$ consisting of the observed data. 
    
    We first control the Type I error under the null hypothesis. Assume that the distribution of $X$ belongs to the class $\mathcal{P}_0(p,n,u,\sigma)$ given in Equation \eqref{p0multivariate}, so that $\Cov(X_i) = \Sigma$ for all $i \in [n]$ and some $\Sigma$ with operator norm $\normmop{\Sigma} = \sigma^2$. Note first that 
    \begin{align}
        \lambda_{\max}^s({\widehat{\Sigma}_{1,t} - \widehat{\Sigma}_{2,t}}) &\leq \lambda_{\max}^s ({\widehat{\Sigma}_{1,t} - \Sigma}) + \lambda_{\max}^s ({\widehat{\Sigma}_{2,t} - \Sigma}),\label{prop4firsteq}
    \end{align}
    for any $t \in \mathcal{T}$. Using Lemma \ref{covarianceconcentrationlemma}, we have
    \begin{align}
        \PP \left\{ \underset{i=1,2}{\max} \   \lambda_{\max}^s ({\widehat{\Sigma}_{i,t}- \Sigma}) \geq c_0 c \sigma^2 r(p,n,s,t)\right\} 
        &\leq 2 \exp\left\{-c \log\log(8n)\right\},
    \end{align}
    for any $c\geq 1$ and $t \in \mathcal{T}$, where $r(p,n,s,t)$ is given in Equation \eqref{rdef} and $c_0$ is the constant from Lemma \ref{covarianceconcentrationlemma} depending only on $u$.  If $c \geq 2 + \log(4/\delta)$, then by a union bound, the event
    \begin{align}
        \mathcal{E} =  \bigcup_{t \in \mathcal{T}} \left\{ \lambda_{\max}^s({\widehat{\Sigma}_{1,t} - \widehat{\Sigma}_{2,t}}) < 2 c_0 c \sigma^2 r(p,n,s,t) \right\}
    \end{align}
    occurs with probability at least
    \begin{align}
        &\PP  (\mathcal{E}) \geq 1 - \PP (\mathcal{E}^\complement)\\
        &\geq 1-  2 \frac{\left| \mathcal{T}\right|}{\log(8n)^c}\\
        &\geq 1 - 2 \frac{1 + \lfloor \log_2(n/2)\rfloor}{\log(8n)^c}\\
        &\geq 1 - \delta/2, 
    \end{align}
    where the last inequality follows from the fact that 
    \begin{align}
        c &\geq 2 + \log(4/\delta) \\
&> \underset{n \in \NN; \ n\geq 2}{\sup} \ \frac{\log\left\{ 1 + \lfloor \log_2(n/2)\rfloor \right\} + \log(4/\delta)}{\log\log(8n)}.
    \end{align}
    Now choose $\lambda \geq \lambda_0= 2c_0\left\{2 +  \log(4/\delta)\right\}$. On $\mathcal{E}$, we then have
    \begin{align}
        S_{t,s} - \lambda \sigma^2 r(p,n,s,t)&\leq \left[ 2c_0\left\{2 +  \log(4/\delta)\right\} - \lambda \right] \sigma^2 r(p,n,s,t)\leq 0.
    \end{align}
    It follows that 
\begin{align}
     \EE \left(   \psi_{\lambda}(X)   \right) &\leq 
     \PP (\mathcal{E}^\complement) \leq \delta/2.
\end{align}

Next, we consider the Type II error. Let $\lambda$ remain unchanged, and assume now that the distribution of $X$ belongs to the class $\mathcal{P}(p,n,s,u,\sigma,\rho)$ given in Equation \eqref{pmultivariate}, where $\rho \geq C \{ s\log(ep/s)\vee \log\log(8n)\}$ and $C \geq 8\lambda^2$. Let $t_0 \in [n-1]$, $\Sigma_1$ and $\Sigma_2$ respectively be the changepoint location and pre- and post-change covariances of the $X_i$, so that $\Cov(X_i) = \Sigma_1$ for $i\leq t_0$ and $\Cov(X_i) = \Sigma_2$ for $i>t_0$, and $\Sigma_1,\Sigma_2$ are some positive definite matrices satisfying $\lambda_{\max}^s(\Sigma_1 - \Sigma_2) = \normmop{\Sigma_1 - \Sigma_2}$ and $\normmop{\Sigma_1}\vee \normmop{\Sigma_2}\leq \sigma^2$. Assume further that Equation \eqref{prop4condition} is satisfied for $t_0, \Sigma_1$ and $\Sigma_2$. Without loss of generality, we may assume that $\min(t_0, n-t_0) = t_0$.  

Note first that there exists some $t \in \mathcal{T}$ such that $t_0/2 \leq t \leq t_0$. For this $t$, we have
    \begin{align}
          \lambda_{\max}^s ({\widehat{\Sigma}_{1,t} - \widehat{\Sigma}_{2,t}}) &\geq \lambda_{\max}^s ({\Sigma_1 - \Sigma_2})- 2 \  \underset{i = 1,2}{\max} \  \lambda_{\max}^s ({\widehat{\Sigma}_{i,t} - \Sigma_i}),
    \end{align}
    Using similar arguments as above to bound the two terms at right-hand side of Equation \eqref{prop4firsteq}, the event 
    \begin{align}
        \mathcal{E} &= \left\{ \underset{i = 1,2}{\max} \  \sigma^{-2}\lambda_{\max}^s ({\widehat{\Sigma}_{i,t} - \Sigma_i}) < \frac{\lambda}{2} r(p,n,s,t) \right\}
    \end{align}
    has probability at least $1-\delta/2$. 

    Now, due to the assumption in Equation \eqref{prop4condition}, we have
    \begin{align}
        \sigma^{-2} \lambda_{\max}^s({\Sigma_1 - \Sigma_2} ) &\geq C^{1/2}   \sqrt{ \frac{s\log\left(\frac{ep}{s}\right) \vee \log\log(8n)}{t_0}}\\
        &\geq \sqrt{\frac{C}{2}}\sqrt{ \frac{s\log\left(\frac{ep}{s}\right) \vee \log\log(8n)}{t}}
    \end{align}
    Note also that since $C\geq 2$, Equation \eqref{prop4condition} implies that $t \geq s\log(ep/s)\vee \log\log(8n)$. To see this, note that $\lambda_{\max}^s({\Sigma_1 - \Sigma_2}) \leq \lambda_{\max}^p({\Sigma_1 - \Sigma_2}) \leq \normmop{\Sigma_1} \vee \normmop{\Sigma_2}\leq \sigma^2$, since $\Sigma_1$ and $\Sigma_2$ are positive definite. Since also $s\log(ep/s)\vee \log\log(8n) >1$, and $t \geq t_0/2$, Equation \eqref{prop4condition} cannot be true if $t < s\log(ep/s)\vee \log\log(8n)$. It follows that
     \begin{align}
         \lambda_{\max}^s({\Sigma_1 - \Sigma_2}) &\geq \sigma^2 \sqrt{\frac{C}{2}} r(p,n,s,t).
    \end{align}
    On the event $\mathcal{E}$, using that $C \geq 8 \lambda^2$ we thus have
    \begin{align}
        S_{t,s} - \lambda \sigma^2 r(p,n,s,t) &\geq \left( \sqrt{\frac{C}{2}} - \lambda \right) \sigma^2 r(p,n,s,t)> 0,
        \intertext{which implies that}
        \EE \left(1 - \psi_{\lambda}(X)\right)&\leq \PP(\mathcal{E}^\complement)\\
        &\leq \frac{\delta}{2},
    \end{align}
    and the proof is complete.
\end{proof}

\begin{proof}[Proof of Proposition \ref{adaptivemultivariateupperbound}]
    Fix any $\delta>0$, and assume that the observations $X_1, \ldots, X_n$ satisfy Assumption \ref{assmultivariate2} for some $w,u>0$. Write $X = (X_1^\top, \ldots, X_n^\top)^\top$ for the random vector in $\RR^{pn}$ consisting of the observed data. 
    
    We first control the Type I error under the null hypothesis. Assume that $\Cov(X) = \mathrm{Diag}( \left\{\Sigma\right\}_{i\in[n]})$ for some  positive definite covariance matrix $\Sigma \in \RR^{p \times p}$. Define the events
    \begin{align}
    \mathcal{E}_5 &= \bigcap_{\substack{s \in \mathcal{S};\\ \gamma(s)\leq n}} \left\{ c_4 \leq \frac{\widehat{\sigma}^2_s}{\lambda_{\max}^s({\Sigma})} \right\},\\
    \mathcal{E}_6 &=  \bigcap_{t \in \mathcal{T}} \bigcap_{s \in \mathcal{S}} \left\{ \lambda_{\max}^s ( {\widehat{\Sigma}_{1,t} - \widehat{\Sigma}_{2,t}} ) \leq c_5 \lambda_{\max}^s({\Sigma}) r(p,n,s,t) \right\},
    \end{align}
    where $\widehat{\sigma}_s^2$ is given in \eqref{sigmahatsdef}, $\mathcal{S}$ is given in \eqref{mathcals}, $r(p,n,s,t)$ is given in \eqref{rdef}, $c_4 =  \delta^2 (\delta^2 + 16\delta + 64)^{-1}(2e\pi w^2)^{-1}$,  $c_5 = 4c_0 \log(1 + 16/\delta)$, and $c_0$ is the constant from Lemma \ref{covarianceconcentrationlemma} depending only on $u>0$.  Define the event $\mathcal{E} = \mathcal{E}_5 \cap \mathcal{E}_6$. By Lemma \ref{eventbound3} we have $\PP(\mathcal{E})\geq 1 - \delta/2$.

    On $\mathcal{E}$, we have
    \begin{align}
        \frac{S_{t,s}}{\widehat{\sigma}^2_s} &\leq \frac{c_5 \lambda_{\max}^s ({\Sigma}) r(p,n,s,t)}{c_4\lambda_{\max}^s ({\Sigma})}\\
        &= \frac{c_5}{c_4}r(p,n,s,t),
    \end{align}
    for all $t\in \mathcal{T}$ and $s \in \mathcal{S}$. Choosing $\lambda \geq \lambda_0=c_5/c_4$, we thus obtain 
    \begin{align}
        \EE \phi_{\mathrm{adaptive},\lambda}(X) &\leq \PP(\mathcal{E}^\complement)\\
        &\leq \frac{\delta}{2}. 
    \end{align}

     Next we consider the Type II error. Let $\lambda$ remain unchanged, and assume now that $\Cov(X_i) = \Sigma_1$ for $i\leq t_0$ and $\Cov(X_i) = \Sigma_2$ for $i>t_0$, where $t_0 \in [n-1]$ and $\Sigma_1, \Sigma_2$ are some positive definite matrices in $\RR^{p\times p}$. 
     Assume further that $\Sigma_1, \Sigma_2$ and $t_0$ satisfy Equation \eqref{prop5condition} for some $s_0\in [p]$, where the constant $C$ satisfies
    \begin{align}
        C > 32 (\lambda c_3 + c_5)^2,
    \end{align}
    where $c_3$ is defined below and $c_5$ is as before.
    
    Without loss of generality, we may assume that $\min(t_0, n-t_0) = t_0$.  Note that there exists some $t \in \mathcal{T}$ such that $t_0/2 \leq t \leq t_0$ and some $s \in \mathcal{S}$ such that $s_0/2\leq s\leq s_0$ and $s \log(ep/s) \leq n$. Fixing this $s$ and $t$, we define the events 
    \begin{align}
    \mathcal{E}_7 &=  \left\{\frac{\widehat{\sigma}^2_s}{\lambda_{\max}^s(\Sigma_1) \wedge \lambda_{\max}^s(\Sigma_2)} \leq c_3\right\},\\
    \mathcal{E}_8 &=  \bigcap_{i=1,2} \left\{ \lambda_{\max}^s ( {\widehat{\Sigma}_{i,t} - \Sigma_i}) \leq \frac{c_5}{2} \lambda_{\max}^s({\Sigma_i}) r(p,n,s,t) \right\},
    \end{align}
    where $c_3 = 1 + c_0 \log(1+4/\delta)$ and $c_5$ is  as before. Define $\mathcal{E} = \mathcal{E}_7 \cap \mathcal{E}_8$. Due to Lemma \ref{eventbound4}, we have that $\PP(\mathcal{E})\geq 1 - \delta/2$. On $\mathcal{E}$, we have
    \begin{align}
          \lambda_{\max}^s ({\widehat{\Sigma}_{1,t} - \widehat{\Sigma}_{2,t}}) &\geq \lambda_{\max}^s({\Sigma_1 - \Sigma_2})- 2 \  \underset{i = 1,2}{\max} \  \lambda_{\max}^s({\widehat{\Sigma}_{i,t} - \Sigma_i})\\
          &\geq \lambda_{\max}^s({\Sigma_1 - \Sigma_2}) - c_5 \left\{ \lambda_{\max}^s(\Sigma_1) \vee \lambda_{\max}^s(\Sigma_2)\right\} r(p,n,s,t).
    \end{align}
    Now, due to the definition of $s_0/2 \leq s \leq s_0$ and Lemma \ref{tullelemma}, we know that 
    \begin{align}\lambda_{\max}^{s_0}(\Sigma_1 - \Sigma_2) \leq \lambda_{\max}^{s}(\Sigma_1 - \Sigma_2) \leq 4\lambda_{\max}^{s_0}(\Sigma_1 - \Sigma_2).
    \end{align}
    Since also $t_0/2 \leq t \leq t_0$, it holds that
    \begin{align}
t \left ( \frac{\lambda_{\max}^s ({\Sigma_1 - \Sigma_2})}{\lambda_{\max}^s(\Sigma_1) \vee \lambda_{\max}^s(\Sigma_2)}\right)^2  &\geq \frac{t}{16}  \left ( \frac{\lambda_{\max}^{s_0} ({\Sigma_1 - \Sigma_2})}{\lambda_{\max}^{s_0}(\Sigma_1) \vee \lambda_{\max}^{s_0}(\Sigma_2)} \right)^2 \\
&\geq \frac{t_0}{32} \left ( \frac{\lambda_{\max}^{s_0} ({\Sigma_1 - \Sigma_2})}{\lambda_{\max}^{s_0}(\Sigma_1) \vee \lambda_{\max}^{s_0}(\Sigma_2)} \right)^2\\
&\geq \frac{C}{32} \left\{ s_0 \log(ep/s_0) \vee \log\log(8n)\right\}\\
&\geq \frac{C}{32} \left\{ s \log(ep/s) \vee \log\log(8n)\right\}.   \label{prop5prooftemp}
\end{align}
    Since $C\geq 32$, the chain of inequalities preceding Equation \eqref{prop5prooftemp} imply that $t \geq s\log(ep/s)\vee \log\log(8n)$, since $\lambda_{\max}^s(\Sigma_1 - \Sigma_2) \leq \lambda_{\max}^s(\Sigma_1) \vee \lambda_{\max}^s(\Sigma_2)$, due to Lemma \ref{tullelemma}. It follows that $r(p,n,s,t) = t^{-1/2}\{ s \log(ep/s) \vee \log \log (8n)\}^{1/2}$ and thus
     \begin{align}
         \lambda_{\max}^s ({\Sigma_1 - \Sigma_2}) &\geq \sqrt{\frac{C}{32}} \left\{ \lambda_{\max}^s(\Sigma_1) \vee \lambda_{\max}^s(\Sigma_2)\right\} r(p,n,s,t).
    \end{align}

    On the event $\mathcal{E}$, we thus have
    \begin{align}
        S_{t,s} - \lambda \widehat{\sigma}^2_s r(p,n,s,t) &\geq \lambda_{\max}^s ({\Sigma_1 - \Sigma_2}) - c_5 \left\{\lambda_{\max}^s(\Sigma_1) \vee \lambda_{\max}^s(\Sigma_2)\right\} r(p,n,s,t)  \\
        & - \lambda  c_3 \left\{\lambda_{\max}^s(\Sigma_1) \wedge \lambda_{\max}^s(\Sigma_2)\right\}  r(p,n,s,t) \\
        &\geq \left( \sqrt{\frac{C}{32}} - c_5 - \lambda c_3 \right) \left\{  \lambda_{\max}^s(\Sigma_1) \vee \lambda_{\max}^s(\Sigma_2) \right\} r(p,n,s,t) \\
        &> 0,
    \end{align}
    since $C > 32 (\lambda c_3 + c_5)^2$. It follows that 
    \begin{align}
        \EE \left(1 - \psi_{\mathrm{adaptive},\lambda}(X)\right) &\leq \PP(\mathcal{E}^{\complement})\\
        &\leq \frac{\delta}{2},
    \end{align}
    and the proof is complete.

\end{proof}

\subsection{Proofs of results from Section \ref{adaptivecompsec}}
\begin{proof}[Proof of Proposition \ref{comuptationaltestresult}]
The proof follows the same strategy as the proof of Proposition \ref{adaptivemultivariateupperbound}. Fix any $\delta>0$, and assume that the observations $X_1, \ldots, X_n$ satisfy Assumption \ref{assmultivariate2} for some $w,u>0$. Write $X = (X_1^\top, \ldots, X_n^\top)^\top$ for the random vector in $\RR^{pn}$ consisting of the observed data. 
    
We first control the Type I error under the null hypothesis. Assume that $\Cov(X) = \mathrm{Diag}( \left\{\Sigma\right\}_{i\in[n]})$ for some  positive definite covariance matrix $\Sigma \in \RR^{p \times p}$. 
Define the events
    \begin{align}
    \mathcal{E}_9 &=  \left\{ c_4 \leq \frac{\widehat{\sigma}^2_{ \mathrm{con}}}{\lambda_{\max}^1({\Sigma})} \right\},\\
    \mathcal{E}_{10} &=  \bigcap_{t \in \mathcal{T}} \bigcap_{s \in \mathcal{S}} \left\{ \widehat{\lambda}_{\max}^s ( {\widehat{\Sigma}_{1,t} - \widehat{\Sigma}_{2,t}} ) \leq c_6 \lambda_{\max}^1({\Sigma}) h(p,n,s,t) \right\},
    \end{align}
    where $\widehat{\sigma}^2_{\mathrm{con}}$ is given in \eqref{convsigmahat}, $\widehat{\Sigma}_{i,t}$ is given in \eqref{sigmahats},  $\mathcal{T}$ is given in \eqref{mathcalt}, $\mathcal{S}$ is given in \eqref{mathcals}, $h(p,n,s,t)$ is given in \eqref{hdef},  $c_4 =  \delta^2 (\delta^2 + 16\delta + 64)^{-1}(2e\pi w^2)^{-1}$,  $c_6 = c_0\{1 + \log(4/\delta)\}$, and $c_0$ is the constant from Lemma \ref{covarianceconcentrationlemma} depending only on $u>0$.   Define the event $\mathcal{E} = \mathcal{E}_9 \cap \mathcal{E}_{10}$. By Lemma \ref{eventbound5} we have $\PP(\mathcal{E})\geq 1 - \delta/2$.

    On $\mathcal{E}$, we have
    \begin{align}
        \frac{\widehat{S}_{t,s}}{\widehat{\sigma}^2_{\mathrm{con}}} &\leq \frac{c_6 \lambda_{\max}^1 ({\Sigma}) h(p,n,s,t)}{c_4\lambda_{\max}^1 ({\Sigma})}\\
        &= \frac{c_6}{c_4}h(p,n,s,t),
    \end{align}
    for all $t\in \mathcal{T}$ and $s \in \mathcal{S}$. Choosing $\lambda \geq \lambda_0=c_6/c_4$, we thus obtain 
    \begin{align}
        \EE \widehat{\phi}_{\mathrm{adaptive},\lambda}(X) &\leq \PP(\mathcal{E}^\complement)\\
        &\leq \frac{\delta}{2}. 
    \end{align}
     Next we consider the Type II error. Let $\lambda$ remain unchanged, and assume now that $\Cov(X_i) = \Sigma_1$ for $i\leq t_0$ and $\Cov(X_i) = \Sigma_2$ for $i>t_0$, where $t_0 \in [n-1]$ and $\Sigma_1, \Sigma_2$ are some positive definite matrices in $\RR^{p\times p}$. 
     Assume further that $\Sigma_1, \Sigma_2$ and $t_0$ satisfy Equation \eqref{prop6condition} for some $s_0\in [p]$, where the constant $C$ satisfies
    \begin{align}
        C > 32 (\lambda c_3 + c_6)^2,
    \end{align}
    where $c_3 = 1 + c_0 \log(1+4/\delta)$ and $c_6$ is as before. 
    
    Without loss of generality, we may assume that $\min(t_0, n-t_0) = t_0$.  Note that there exists some $t \in \mathcal{T}$ such that $t_0/2 \leq t \leq t_0$ and some $s \in \mathcal{S}$ such that $s_0/2\leq s\leq s_0$. Fixing this $s$ and $t$, we define the events 
    \begin{align}
    \mathcal{E}_{11} &=  \left\{\frac{\widehat{\sigma}^2_{\mathrm{con}}}{\lambda_{\max}^1(\Sigma_1) \wedge \lambda_{\max}^1(\Sigma_2)} \leq  c_3\right\},\\
    \mathcal{E}_{12} &=  \bigcap_{i=1,2} \left\{ \widehat{\lambda}_{\max}^s ( {\widehat{\Sigma}_{i,t} - \Sigma_i}) \leq \frac{c_6}{2} \lambda_{\max}^1({\Sigma_i}) h(p,n,s,t) \right\},
    \end{align}
    where $c_3$ and $c_6$ are as before. Define $\mathcal{E} = \mathcal{E}_{11} \cap \mathcal{E}_{12}$. Due to Lemma \ref{eventbound6}, we have that $\PP(\mathcal{E})\geq 1 - \delta/2$. On $\mathcal{E}$, we have
    \begin{align}
          \widehat{\lambda}_{\max}^s ({\widehat{\Sigma}_{1,t} - \widehat{\Sigma}_{2,t}}) &\geq \widehat{\lambda}_{\max}^s({\Sigma_1 - \Sigma_2})- 2 \  \underset{i = 1,2}{\max} \  \widehat{\lambda}_{\max}^s({\widehat{\Sigma}_{i,t} - \Sigma_i})\\
          &\geq \lambda_{\max}^s({\Sigma_1 - \Sigma_2}) - c_6 \left\{ \lambda_{\max}^1(\Sigma_1) \vee \lambda_{\max}^1(\Sigma_2)\right\} h(p,n,s,t).
    \end{align}
    Due to the definition of $s_0/2 \leq s \leq s_0$ and Lemma \ref{tullelemma}, we know that 
    \begin{align}\lambda_{\max}^{s_0}(\Sigma_1 - \Sigma_2) \leq \lambda_{\max}^{s}(\Sigma_1 - \Sigma_2) \leq 4\lambda_{\max}^{s_0}(\Sigma_1 - \Sigma_2).
    \end{align}
    Since also $t_0/2 \leq t \leq t_0$, it holds that
    \begin{align}
t \left ( \frac{\lambda_{\max}^s ({\Sigma_1 - \Sigma_2})}{\lambda_{\max}^1(\Sigma_1) \vee \lambda_{\max}^1(\Sigma_2)}\right)^2  &\geq \frac{t}{16}  \left ( \frac{\lambda_{\max}^{s_0} ({\Sigma_1 - \Sigma_2})}{\lambda_{\max}^{s_0}(\Sigma_1) \vee \lambda_{\max}^{s_0}(\Sigma_2)} \right)^2 \\
&\geq \frac{t_0}{32} \left ( \frac{\lambda_{\max}^{s_0} ({\Sigma_1 - \Sigma_2})}{\lambda_{\max}^{s_0}(\Sigma_1) \vee \lambda_{\max}^{s_0}(\Sigma_2)} \right)^2\\
&\geq \frac{C}{32} s_0^2 \left\{ \log(ep) \vee \log\log(8n)\right\}\\
&\geq \frac{C}{32} s^2 \left\{ \log(ep) \vee \log\log(8n)\right\}.   \label{prop6prooftemp}
\end{align}
    Since $C\geq 32$, the chain of inequalities preceding Equation \eqref{prop6prooftemp} imply that \\$t \geq \left\{ \log(ep) \vee \log\log(8n)\right\}$, since $\lambda_{\max}^s(\Sigma_1 - \Sigma_2)$ can never exceed the maximum of $\lambda_{\max}^s(\Sigma_1)$ and $\lambda_{\max}^s(\Sigma_2)$. It follows that $h(p,n,s,t) = s t^{-1/2} \{ \log (ep) \vee \log \log (8n)\}^{1/2}$, and thus
     \begin{align}
         \lambda_{\max}^s ({\Sigma_1 - \Sigma_2}) &\geq \sqrt{\frac{C}{32}} \left\{ \lambda_{\max}^1(\Sigma_1) \vee \lambda_{\max}^1(\Sigma_2)\right\}  h(p,n,s,t).
    \end{align}

    On the event $\mathcal{E}$, we thus have
    \begin{align}
        & \widehat{S}_{t,s} - \lambda \widehat{\sigma}^2_{\mathrm{con}} h(p,n,s,t) \\
        \geq & \lambda_{\max}^s ({\Sigma_1 - \Sigma_2}) - c_6 \left\{\lambda_{\max}^1(\Sigma_1) \vee \lambda_{\max}^1(\Sigma_2)\right\} h(p,n,s,t)  \\
         - & \lambda  c_3 \left\{\lambda_{\max}^1(\Sigma_1) \vee \lambda_{\max}^1(\Sigma_2)\right\}  h(p,n,s,t) \\
        \geq  & \left( \sqrt{\frac{C}{32}} - c_6 - \lambda c_3 \right) \left\{  \lambda_{\max}^1(\Sigma_1) \vee \lambda_{\max}^1(\Sigma_2) \right\} h(p,n,s,t) \\
        > & 0,
    \end{align}
    since $C > 32 (\lambda c_3 + c_6)^2$. It follows that 
    \begin{align}
        \EE \left(1 - \widehat{\psi}_{\mathrm{adaptive},\lambda}(X)\right) &\leq \PP(\mathcal{E}^{\complement})\\
        &\leq \frac{\delta}{2},
    \end{align}
    and the proof is complete.

\end{proof}

\section{Auxiliary Lemmas}
\begin{lemma}\label{lowerchernoff}
Let $X_1, \ldots, X_n$ be independent random variables, and assume that each $X_i /\sigma$ has a continuous density bounded above by $w$ for $i=1, \ldots, n\in \NN$ and some $w>0$. Let $S = \sum_{i=1}^n X_i^2$. Then for any $x>0$ we have
\begin{align}
    \PP\left(  S \leq \sigma^2 x\right) &\leq \exp \left[ \frac{n}{2} \left\{1 + \log\left(2\pi w^2\right) - \log\left(\frac{n}{x}\right) \right\} \right].
\end{align}
\end{lemma}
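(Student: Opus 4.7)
The plan is to prove this lower tail bound via the standard Chernoff / exponential Markov trick applied to $e^{-\lambda S}$, using the density bound to control each Laplace transform $\E[\exp(-\lambda X_i^2 / \sigma^2)]$. After dividing through by $\sigma^2$, let $Y_i = X_i/\sigma$ so that each $Y_i$ has density bounded above by $w$, and it suffices to bound $\PP\bigl(\sum_i Y_i^2 \le x\bigr)$.

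First I would apply Markov's inequality in the form
\begin{align}
\PP\Bigl(\sum_{i=1}^n Y_i^2 \le x\Bigr) = \PP\bigl(e^{-\lambda \sum_i Y_i^2} \ge e^{-\lambda x}\bigr) \le e^{\lambda x}\prod_{i=1}^n \E\bigl[e^{-\lambda Y_i^2}\bigr],
\end{align}
valid for any $\lambda > 0$, using independence of the $Y_i$. The key step is bounding each factor $\E[e^{-\lambda Y_i^2}]$ using only the density bound. Since the density of $Y_i$ is bounded by $w$, we have
\begin{align}
\E\bigl[e^{-\lambda Y_i^2}\bigr] = \int_{\RR} e^{-\lambda y^2} f_{Y_i}(y)\,dy \le w \int_{\RR} e^{-\lambda y^2}\,dy = w\sqrt{\pi/\lambda}.
\end{align}
Plugging this in yields
\begin{align}
\PP\Bigl(\sum_{i=1}^n Y_i^2 \le x\Bigr) \le \exp\!\Bigl\{\lambda x + \tfrac{n}{2}\log(\pi w^2/\lambda)\Bigr\}.
\end{align}

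Finally I would optimize over $\lambda > 0$. Differentiating the exponent gives the minimizer $\lambda^* = n/(2x)$, and substituting back yields
\begin{align}
\PP\Bigl(\sum_{i=1}^n Y_i^2 \le x\Bigr) \le \exp\!\Bigl[\tfrac{n}{2} + \tfrac{n}{2}\log(2\pi w^2 x / n)\Bigr] = \exp\!\Bigl[\tfrac{n}{2}\{1 + \log(2\pi w^2) - \log(n/x)\}\Bigr],
\end{align}
which is the claimed bound. There is no real obstacle here — the only non-routine element is recognizing that the density bound $f_{Y_i} \le w$ gives a uniform-over-$i$ control $\E[e^{-\lambda Y_i^2}] \le w\sqrt{\pi/\lambda}$ without any moment or tail assumption on $Y_i$, which is exactly what makes the argument work for the general family under Assumption \ref{assunivariate-b}.
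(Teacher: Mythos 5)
Your proof is correct and follows essentially the same Chernoff argument as the paper: bound $\EE[e^{-\lambda X_i^2/\sigma^2}]$ by $w\sqrt{\pi/\lambda}$ using the density bound, then optimize at $\lambda = n/(2x)$. The only cosmetic difference is that the paper first writes down the density of $X_i^2$ and integrates $e^{\lambda y}$ against it, whereas you integrate $e^{-\lambda y^2}$ directly against the density of $X_i/\sigma$; these are the same computation after a change of variables.
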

\begin{proof}
    By a Chernoff bound, we have
    \begin{align}
        \PP\left(  S \leq \sigma^2 x\right) &\leq \underset{\lambda<0}{\inf} \ \exp\left( -\lambda \sigma^2 x\right) \prod_{i=1}^n M_{Y_i} \left( \lambda \right) ,
    \end{align}
    where $M_{Y_i}$ denotes the Moment Generating Function of $Y_i = X_i^2$. Now, the density $f_{Y_i}$ of $Y_i$ satisfies $f_{Y_i}(y) \leq y^{-1/2} w /\sigma$, which implies that
    \begin{align}
        M_{Y_i}(\lambda) &\leq \int_{y=0}^{\infty} \frac{\exp\left( \lambda y \right) y^{-1/2} w}{\sigma}dy\\
        &=\frac{1}{\sqrt{-\lambda}} \frac{\sqrt{\pi}w}{\sigma},
    \end{align}
    for $\lambda<0$. Hence, 
    \begin{align}
        \PP\left(  S \leq \sigma^2 x\right) &\leq \underset{\lambda<0}{\inf} \ \exp\left\{\frac{n}{2} \log \left( - \frac{\pi w^2}{\lambda \sigma^2} \right) -\lambda \sigma^2 x\right\}.
    \end{align}
    Set $\lambda = - n \left(2\sigma^2x\right)^{-1}$ to obtain the desired result. 
\end{proof}

The following Lemma can be viewed as a unification of Proposition 4.2 in \cite{berthet_optimal} and Theorem 1 in \cite{koltchinskii2017}.
\begin{lemma}\label{covarianceconcentrationlemma}
    Fix any $p\in \NN$ and $s\in [p]$, and let $X_i$ be centered and independent $p$-dimensional sub-Gaussian random variables with $\EE X_i X_i^\top = \Sigma$, for $i=1, \ldots, n$ and some $\Sigma \in \RR^{p \times p}$.  Assume further that $\normmo{X_i}^2 \leq u \normmop{\Sigma}$ for all $i$ and some $u>0$. Let $\widehat{\Sigma} = n^{-1} \sum_{i=1}^n X_i X_i^\top$ and let $\lambda_{\max}^s(\cdot)$ be defined as in \eqref{firsteigsparse}. There exists a constant $c_0>0$ depending only on $u$, such that, for all $x\geq 1$, we have
    \begin{align}
        \PP \left[\lambda_{\max}^s (\widehat{\Sigma}- \Sigma) \geq c_0 \normm{\Sigma}_{\mathrm{op}} \left\{   \sqrt{\frac{s\log(ep/s)}{n}}\vee \frac{s\log(ep/s)}{n} \vee \sqrt{\frac{x}{n}} \vee \frac{x}{n}  \right\}  \right] \leq e^{-x}. \label{eqberthet}
    \end{align}
    Moreover, if $\normmo{v^\top X_i}^2 \leq u ( v^\top \Sigma v)$ for any $v \in S^{p-1}$, the factor $\normm{\Sigma}_{\mathrm{op}}$ on the left hand side of \eqref{eqberthet} can be replaced by $\lambda_{\max}^s ({\Sigma})$. 
\end{lemma}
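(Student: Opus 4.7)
The plan is to proceed by the standard discretisation--plus--Bernstein strategy, adapted to the sparse unit sphere. By the variational identity
\[
\lambda_{\max}^s(\widehat\Sigma - \Sigma) \;=\; \sup_{v \in S^{p-1}_s} \bigl| v^\top (\widehat\Sigma - \Sigma) v \bigr|
\;=\; \sup_{v \in S^{p-1}_s} \Bigl| \tfrac{1}{n} \sum_{i=1}^n \bigl\{ (v^\top X_i)^2 - v^\top \Sigma v \bigr\} \Bigr|,
\]
the task reduces to a uniform deviation bound for sums of centred sub-exponential random variables indexed by $v \in S^{p-1}_s$. The key constants are $K_v := \|v^\top X_i\|_{\Psi_2}^2$: under the weaker hypothesis one has $K_v \leq u\|\Sigma\|_{\mathrm{op}}$ uniformly, while under the stronger hypothesis one has the sharper $K_v \leq u (v^\top \Sigma v) \leq u \lambda_{\max}^s(\Sigma)$ for every $v \in S^{p-1}_s$; the two statements are therefore proved in parallel, differing only in which quantity plays the role of the noise level.

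The first step is to decompose $S^{p-1}_s = \bigcup_{I \subseteq [p],\,|I|=s} S_I$, where $S_I$ is the unit sphere supported on coordinates in $I$. For each fixed $I$, I would take a $(1/4)$-net $N_I$ of $S_I$ with $|N_I| \leq 9^s$ by the volumetric argument, and form the global net $N := \bigcup_I N_I$ of cardinality at most $\binom{p}{s} 9^s \leq (9ep/s)^s$, so that $\log|N| \lesssim s \log(ep/s)$. A standard polarisation / discretisation argument, carried out within each fixed support $I$ (so that $v \pm u$ remain $s$-sparse for $v,u \in S_I$), then yields
\[
\lambda_{\max}^s (\widehat\Sigma - \Sigma) \;\leq\; 2 \, \max_{v \in N} \bigl| v^\top (\widehat\Sigma - \Sigma) v \bigr|,
\]
thereby replacing the uncountable supremum by a maximum over a finite index set.

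Next I would apply Bernstein's inequality for sub-exponential summands to each fixed $v \in N$. Since $v^\top X_i$ is sub-Gaussian with $\|v^\top X_i\|_{\Psi_2}^2 \leq K_v$, the centred random variable $(v^\top X_i)^2 - v^\top \Sigma v$ is sub-exponential with Orlicz-$\Psi_1$ norm of order $K_v$. Bernstein then yields, for every $t > 0$,
\[
\PP\!\left( \Bigl| \tfrac{1}{n}\sum_{i=1}^n \bigl\{ (v^\top X_i)^2 - v^\top \Sigma v \bigr\} \Bigr| \geq t \right)
\;\leq\; 2 \exp\!\left( - c\, n \min\!\left( \tfrac{t^2}{K_v^2}, \tfrac{t}{K_v} \right) \right),
\]
with an absolute constant $c > 0$.

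Finally, I would union-bound the above over $N$, bounding $K_v$ by the uniform bound $u\|\Sigma\|_{\mathrm{op}}$ (respectively $u\lambda_{\max}^s(\Sigma)$). Choosing $t$ so that the exponent dominates $\log|N| + x \lesssim s\log(ep/s) + x$, and applying $\sqrt{a+b} \leq \sqrt{a} + \sqrt{b}$ and $a + b \leq 2(a\vee b)$, delivers the stated bound with a suitable constant $c_0$ depending only on $u$. I expect the main subtleties to lie in (i) running the polarisation step \emph{within a fixed support} so that the sparsity level is preserved in the discretisation, and (ii) keeping careful track of the two regimes (sub-Gaussian vs.\ sub-exponential tail) so that the final bound features the sum-of-four-terms form involving both $\sqrt{\cdot/n}$ and $\cdot/n$; the remaining computations are routine.
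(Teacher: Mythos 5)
Your proposal is correct and takes essentially the same approach as the paper's proof. Both arguments decompose the $s$-sparse sphere over $\binom{p}{s}$ supports, build a $1/4$-net of cardinality $\binom{p}{s}9^s$, reduce the supremum to a maximum over the net with a factor of $2$, apply Bernstein with the per-summand Orlicz-$\Psi_1$ norm controlled by $u\normmop{\Sigma}$ (or $u\lambda_{\max}^s(\Sigma)$ under the stronger hypothesis), union bound, and choose the deviation level of order $s\log(ep/s)\vee x$.
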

\begin{proof}
    The proof follows a similar strategy as the proof of Proposition 4.2 in \cite{berthet_optimal}, which is a slightly less general result. We begin by claiming that there exist a subset $\mathcal{N}$ the unit sphere $S^{p-1}$ with cardinality at most ${p \choose s} 9^s$ such that for any symmetric $A\in \RR^{p\times p}$, we have
    \begin{align}
        \lambda_{\max}^s (A) &\leq 2 \ \underset{v \in \mathcal{N}}{\max} \  |v^\top A v|\label{claim1}. 
    \end{align}
    To show this, note first that 
    \begin{align}
        \lambda_{\max}^s (A)  &= \underset{\substack{I \subseteq [p]\\ |I| =s}}{\max} \ \lambda_{\max}^s(A_I),
    \end{align}
    where $A_I = \left\{A_{i,j}\right\}_{(i,j)\in I \times I} \in \RR^{s\times s}$ is the sub-matrix of $A$ in which the rows and columns not contained in the index set $I$ have been removed. Now, for any $I \subseteq [p]$ such that $|I| = s$, we have
    \begin{align}
        \lambda_{\max}(A_I) &= \underset{v \in S^{s-1}}{\sup} \ |v^\top A_I v|\\
        &= \normmop{A_I},
    \end{align}
    where the last inequality follows from $A$ being symmetric, in which case operator norm agrees with the largest absolute eigenvalue of $A$. As in the proof of Theorem 4.4.5 in \cite{Vershynin_2018}, as well as the results in Exercise 4.4.3 in the same book, there exists an $1/4$-net $\mathcal{N}_I$ of the unit sphere ${S}^{s-1}$ with cardinality at most $9^s$ such that 
    \begin{align}
        \normmop{A_I} &\leq 2 \underset{x \in \mathcal{N}_I}{\max} \ |x^\top A_I x|.
    \end{align}
    For any vector $v \in \RR^{p}$ and $I \subseteq [p]$, let $v_I$ denote the sub-vector of $v$ containing the $i$-th entry of $v$ if and only if $i \in I$. Now set
    \begin{align}
        \mathcal{N} = \bigcup_{I \subseteq [p], |I| = s} \left\{  v \in \RR^p \ ; \ v_I  \in \mathcal{N}_I, v_{I^\complement} = 0      \right\}, 
    \end{align}
    which has cardinality at most ${p \choose s} 9^s$, proving the claim. It follows that
    \begin{align}
        \lambda_{\max}^s (\widehat{\Sigma} - \Sigma) &\leq 2 \underset{v \in \mathcal{N}}{\max} \  |v^\top (\widehat{\Sigma} - \Sigma) v|.
    \end{align}
    Now, for any $v \in \mathcal{N}$ we have
    \begin{align}
        v^\top \left( \widehat{\Sigma}-\Sigma \right)v &= \frac{1}{n}\sum_{i=1}^n \left[\left(v^\top X_i\right)^2 - \EE \left\{ \left( v^\top X_i\right)^2\right\} \right]\label{simplification}.
    \end{align}

    Since the $X_i$ are sub-Gaussian, the summands on the right hand side of Equation \eqref{simplification} are independent and mean-zero sub-Exponential random variables\footnote{The Orlicz-1 norm $\normm{X}_{\Psi_1}$ of a real-valued random variable $X$ is defined as $\normm{X}_{\Psi_1} = \inf \ \{ \lambda \geq 0 \ ; \ \EE \exp (|X|/\lambda) \leq 2\}$, and $X$ is said to be sub-Exponential if $\normm{X}_{\Psi_1}\leq \infty$. }, with Orlicz-1 norm 
    \begin{align}
        \normmotwo{\left(v^\top X_i\right)^2} &= \normmo{v^\top X_i}^2\\
        &\leq \normmo{X_i}^2\\
        &\leq u \normmop{\Sigma},\label{orliczstuff}
    \end{align}
    where the first equality is due to Lemma 2.7.6 in \cite{Vershynin_2018}. Since centering of a sub-Gamma random variable preserves the Orlicz-1 norm up to a universial constant, it follows that $\lVert v^\top (\widehat{\Sigma} - \Sigma)v \rVert_{\Psi_1} \lesssim u \normmop{\Sigma}$. From Bernstein's Inequality (Theorem 2.8.1 in \citeauthor{Vershynin_2018} \citeyear{Vershynin_2018}), we thus have
    \begin{align}
        \PP \left[  \left| \frac{1}{n}\sum_{i=1}^n \left[\left(v^\top X_i\right)^2 - \EE \left\{ \left( v^\top X_i\right)^2\right\} \right] \right|  \geq  c u \normmop{\Sigma} \left( \sqrt{\frac{t}{n}} \vee \frac{t}{n}\right) \right] &\leq 2e^{-t},
    \end{align}
    for some absolute constant $c\geq 1$ and any $t>0$. By a union bound over all $v \in \mathcal{N}$, we obtain 
    \begin{align}
        \PP \left\{ \lambda_{\max}^s ( {\widehat{\Sigma}- \Sigma}) \geq {2c u \normmop{\Sigma}} \left( \sqrt{\frac{t}{n}} \vee \frac{t}{n}\right)  \right\} &\leq 2 {p \choose s} 9^s e^{-t}\\
        &\leq 2 \exp\left\{ 4 s \log\left(\frac{ep}{s}\right) - t\right\}.
    \end{align}
    Now set $t = 8s\log(ep/s) \vee 2(1+\log 2)x $ to obtain the first claim of the Lemma,  letting $c_0 = 16cu$. 

    For the second claim, note that if $\normmo{v^\top X_i}^2 \leq u ( v^\top \Sigma v)$ for any $v \in S^{p-1}$, then then the right hand side of the inequality in \eqref{orliczstuff} may be replaced by $u (v^\top \Sigma v)$, which is bounded above by $\lambda_{\max}^s(\Sigma)$ for any $v \in S^{p-1}_s$. The second claim then holds with the same constant $c_0$ as before. 
\end{proof}

\begin{lemma}\label{eventbound1}
Let $X_1, \ldots, X_n$ be univariate random variables satisfying Assumption \ref{assunivariate} for some $u>0$, and assume that $\Var(X_i) = \sigma^2>0$ for all $i\in [n]$. For any $t$, let $\widehat{\sigma}^2_{1,t} = \frac{1}{t}\sum_{i=1}^t X_i^2$ and $\widehat{\sigma}^2_{2,t} = \frac{1}{t} \sum_{i=1}^t X_{n-i+1}^2$. Let $\mathcal{T}$ be as in \eqref{mathcalt}, and fix any $\delta \in (0,1)$. Define the events
\begin{align}
    \mathcal{E}_1 &= \bigcap_{i=1,2} \bigcap_{ t \in \mathcal{T}} \left\{ \widehat{\sigma}_{i,t}^2 \geq \sigma^2 c_1  \right\},\\
    \mathcal{E}_2 &= \bigcap_{i=1,2} \bigcap_{ t \in \mathcal{T}} \left\{  \left| \widehat{\sigma}_{i,t}^2 - \sigma^2\right| \leq \sigma^2  c_2 \left( \frac{\log\log(8n)}{t} \vee \sqrt{\frac{\log\log(8n)}{t}}\right)  \right\},
\end{align}
where $c_1 = \delta^2 (16 w)^{-2} (2e\pi)^{-1}$, $c_2 = c_0 \left\{ 2 + \log\left(8/\delta\right)\right\} \vee 1$ and $c_0$ is the constant from Lemma \ref{covarianceconcentrationlemma} depending only on $u$. Then we have $\PP(\mathcal{E}_1 \cap \mathcal{E}_2) \geq 1- \delta/2$.
\end{lemma}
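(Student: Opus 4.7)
The plan is to union-bound separately: $\PP(\mathcal E_1^c)\leq \delta/4$ and $\PP(\mathcal E_2^c)\leq \delta/4$. The two events require different tools: Lemma~\ref{covarianceconcentrationlemma} gives sharp two-sided deviations at a scale that shrinks with $t$, whereas $\mathcal E_1$ only asks that $\widehat\sigma_{i,t}^2$ stay bounded away from $0$ by a constant, which must hold even at $t=1$. For that regime Lemma~\ref{covarianceconcentrationlemma} is useless, and I must instead feed Assumption~\ref{assunivariate-b} into Lemma~\ref{lowerchernoff}.

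For $\mathcal E_2$, I would specialize Lemma~\ref{covarianceconcentrationlemma} to $p=s=1$, so that $s\log(ep/s)=1$ and, for any $x\geq 1$,
\[
\PP\!\left[\left|\widehat\sigma_{i,t}^2-\sigma^2\right|\geq c_0\sigma^2\!\left(\sqrt{x/t}\vee x/t\right)\right]\leq e^{-x}.
\]
Setting $x=(c_2/c_0)\log\log(8n)$ and using the elementary homogeneity $f(\alpha y)\leq \alpha f(y)$ for $f(y)=\sqrt y\vee y$ and $\alpha\geq 1$, this delivers the deviation at the scale prescribed by $\mathcal E_2$ with failure probability $(\log 8n)^{-c_2/c_0}$ per $(i,t)$. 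A union bound over the $2|\mathcal T|\leq 2\log_2(8n)$ many pairs then yields $\PP(\mathcal E_2^c)\leq 2\log_2(8n)(\log 8n)^{-c_2/c_0}\leq \delta/4$, where the final step uses $c_2/c_0\geq 2+\log(8/\delta)$ to absorb both the $\log_2(8n)$ prefactor and $\delta$.

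For $\mathcal E_1$, I would apply Lemma~\ref{lowerchernoff} to each of $\sum_{j=1}^t X_j^2$ and $\sum_{j=1}^t X_{n-j+1}^2$ with $x=tc_1$; the exponent becomes $(t/2)\{1+\log(2\pi w^2 c_1)\}$, and the specific value $c_1=\delta^2/\{2e\pi(16w)^2\}$ is engineered so that this collapses to $-(t/2)\log(128)+t\log\delta$, giving $\PP(\widehat\sigma_{i,t}^2\leq c_1\sigma^2)\leq (\delta/(8\sqrt 2))^t$. The geometric structure of $\mathcal T$ then lets me sum using $2^k\geq k+1$:
\[
\sum_{t\in\mathcal T}\left(\frac{\delta}{8\sqrt 2}\right)^t\leq\sum_{k\geq 0}\left(\frac{\delta}{8\sqrt 2}\right)^{k+1}\leq\frac{\delta/(8\sqrt 2)}{1-\delta/(8\sqrt 2)}\leq\frac{\delta}{8}.
\]
Doubling for $i\in\{1,2\}$ gives $\PP(\mathcal E_1^c)\leq \delta/4$, and the union bound concludes.

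The main obstacle will be calibrational rather than conceptual: $c_1$ must depend on $w$ and $\delta$ so that the Chernoff exponent already defeats $\delta$ at $t=1$ using only the density upper bound $w$, and $c_2$ must exceed $c_0\{2+\log(8/\delta)\}$ so that the $|\mathcal T|$ factor is absorbed into the denominator $(\log 8n)^{c_2/c_0}$. The values of $c_1$ and $c_2$ given in the lemma are exactly what is needed for both ends.
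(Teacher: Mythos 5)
Your proposal is correct and follows essentially the same route as the paper's proof: Lemma~\ref{lowerchernoff} (fed the density bound from Assumption~\ref{assunivariate-b}) for the lower tail controlling $\mathcal{E}_1$, Lemma~\ref{covarianceconcentrationlemma} specialized to $p=s=1$ for the two-sided deviation controlling $\mathcal{E}_2$, and union bounds over $i\in\{1,2\}$ and $t\in\mathcal{T}$ using the geometric structure of the grid. The only discrepancy is a small arithmetic slip: with $c_1 = \delta^2(16w)^{-2}(2e\pi)^{-1}$, the exponent in Lemma~\ref{lowerchernoff} collapses to $t\log\delta - \tfrac{t}{2}\log 256 = t\log(\delta/16)$, giving $\PP(\widehat\sigma_{i,t}^2\leq c_1\sigma^2)\leq(\delta/16)^t$ rather than your claimed $(\delta/(8\sqrt 2))^t$; since $16>8\sqrt 2$ the true bound is tighter than what you wrote, so your subsequent geometric summation remains valid a fortiori and the conclusion stands.
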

\begin{proof}
    Observe that by Lemma \ref{lowerchernoff}, for  we have 
\begin{align}
    \PP\left( \mathcal{E}_1^\complement\right) &\leq \sum_{i=1,2}\sum_{t \in \mathcal{T}} \PP  \left( \widehat{\sigma}_{i,t}^2 \leq \sigma^2 c_1   \right)\\
    &\leq 2\sum_{k=0}^{\infty} \left(\frac{\delta}{16}\right)^{2^k}\\
    &\leq \frac{\delta}{4}. 
\end{align}
Moreover, using Lemma \ref{covarianceconcentrationlemma}, we have
\begin{align}
    \PP \left( \mathcal{E}_2^\complement \right) &\leq \sum_{i=1,2}\sum_{t \in \mathcal{T}} \PP\left\{   \left| \widehat{\sigma}_{i,t}^2  - \sigma^2\right| \geq \sigma^2 c_2 \left( \frac{\log\log(8n)}{t} \vee \sqrt{\frac{\log\log(8n)}{t}}\right)  \right\}\\
    &\leq 2 \left| \mathcal{T}\right| \exp\left\{ - (c_2/c_0)  \log\log(8n)\right\}\\
    &\leq 2\frac{\log_2(n/2)+1}{\log(8n)^{c_2/c_0}}\\
    &\leq \frac{\delta}{4},
\end{align}
where we in the last inequality used the $c_2$ and the fact that \begin{align}
\{\log \log (8n)\}^{-1} \log \{ 1 + \lfloor \log_2(n/2)\rfloor\} \leq 2
\end{align} and $\log\log(8n)\geq 1$ for all $n \geq 2$.
\end{proof}

\begin{lemma}\label{eventbound2}
For some $t \in \NN$, let $X_1, \ldots, X_t$ and $Y_1, \ldots, Y_t$ be two independent sets of univariate random variables, each satisfying Assumption \ref{assunivariate} for some $w,u>0$. Assume that $\Var(X_i) = \sigma_1^2$ and $\Var(Y_i) = \sigma_2^2$ for all $i\in [t]$ and some $\sigma_1^2, \sigma_2^2>0$. 
Let $\widehat{\sigma}^2_{1,t} = \frac{1}{t}\sum_{i=1}^t X_i^2$ and $\widehat{\sigma}^2_{2,t} = \frac{1}{t} \sum_{i=1}^t Y_{i}^2$. Fix any $\delta\in(0,1)$ and define the events
\begin{align}
    \mathcal{E}_3 &= \left\{   \widehat{\sigma}_{2,t}^2 \geq  \sigma_2^2 \left( c_1 \vee  \left[1 -c_0\left\{ \frac{\log(8/\delta)}{t} \vee \sqrt{\frac{\log(8/\delta)}{t}}\right\}  \right]   \right)  \right\}\\
    \mathcal{E}_4 &= \left\{ \widehat{\sigma}_{1,t}^2 \leq \sigma_1^2 \left[ 1 + c_0\left\{ \frac{\log(8/\delta)}{t} \vee \sqrt{\frac{\log(8/\delta)}{t}}\right\} \right] \right\}, 
\end{align}
where $c_1 = \delta^2 (16 w)^{-2} (2e\pi)^{-1}$ and $c_0$ is the constant from Lemma \ref{covarianceconcentrationlemma} depending only on $u$. Then $\PP\left( \mathcal{E}_3 \cap \mathcal{E}_4\right)\geq 1 - \delta /2$.
\end{lemma}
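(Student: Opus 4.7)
The plan is to bound $\PP(\mathcal{E}_3^\complement)$ and $\PP(\mathcal{E}_4^\complement)$ separately by $\delta/4$ each, and conclude via a union bound. Since $y \geq a \vee b$ is equivalent to $y \geq a$ and $y \geq b$ simultaneously, the event $\mathcal{E}_3$ can be rewritten as
\[
\mathcal{E}_3 = \{\widehat{\sigma}_{2,t}^2 \geq \sigma_2^2 c_1\} \cap \bigl\{\widehat{\sigma}_{2,t}^2 \geq \sigma_2^2[1 - c_0\{\log(8/\delta)/t \vee \sqrt{\log(8/\delta)/t}\}]\bigr\},
\]
so its complement is covered, via a further union bound, by the complements of the two intersecting events.

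For the ``concentration-type'' pieces---namely the upper-tail event in $\mathcal{E}_4^\complement$ and the second component of $\mathcal{E}_3^\complement$---I would apply Lemma \ref{covarianceconcentrationlemma} with $p = s = 1$ and $x = \log(8/\delta)$. Since $\delta \in (0,1)$ forces $\log(8/\delta) > 1$, the hypothesis $x \geq 1$ of the lemma is met, and the $s\log(ep/s)$ contribution to the rate is dominated by the $x$ terms. The lemma then yields $\PP(|\widehat{\sigma}_{i,t}^2 - \sigma_i^2| \geq c_0 \sigma_i^2 \{\log(8/\delta)/t \vee \sqrt{\log(8/\delta)/t}\}) \leq e^{-\log(8/\delta)} = \delta/8$ for $i = 1, 2$, which handles both events.

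For the remaining lower-tail event $\{\widehat{\sigma}_{2,t}^2 < \sigma_2^2 c_1\}$, the concentration inequality is too weak when $t$ is small (the deviation term $c_0\{\log(8/\delta)/t \vee \sqrt{\log(8/\delta)/t}\}$ may exceed $1$), so I would instead invoke Lemma \ref{lowerchernoff} with $n = t$ and $x = t c_1$. The exponent then becomes $\tfrac{t}{2}\{1 + \log(2\pi w^2 c_1)\}$, and the decisive calculation is to verify that with the calibrated constant $c_1 = \delta^2/\{(16w)^2(2e\pi)\}$ one has $2\pi w^2 c_1 = \delta^2/(256 e)$, making the exponent collapse to $t \log(\delta/16)$. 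This gives $\PP(\widehat{\sigma}_{2,t}^2 \leq \sigma_2^2 c_1) \leq (\delta/16)^t \leq \delta/16$ for any $t \geq 1$.

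Putting the three pieces together by union bound delivers $\PP(\mathcal{E}_3^\complement) \leq \delta/16 + \delta/8 \leq \delta/4$ and $\PP(\mathcal{E}_4^\complement) \leq \delta/8 \leq \delta/4$, hence $\PP(\mathcal{E}_3 \cap \mathcal{E}_4) \geq 1 - \delta/2$ as required. The only non-routine step is the arithmetic simplification in the Chernoff calculation, which works out precisely because $c_1$ has been engineered for this purpose; once this is observed I do not anticipate any serious obstacle.
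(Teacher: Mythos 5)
Your proposal is correct and follows the same route as the paper: decompose $\mathcal{E}_3^\complement \cup \mathcal{E}_4^\complement$ into a lower-tail piece handled by Lemma \ref{lowerchernoff} (with $n=t$, $x=tc_1$) and two concentration pieces handled by Lemma \ref{covarianceconcentrationlemma} (with $p=s=1$, $x=\log(8/\delta)$), then union bound. The only difference is that you carry out the Chernoff arithmetic explicitly to obtain $(\delta/16)^t \le \delta/16$, whereas the paper cites the analogous computation in Lemma \ref{eventbound1} and simply states the looser bound $\delta/4$; both lead to $\PP(\mathcal{E}_3 \cap \mathcal{E}_4) \ge 1-\delta/2$.
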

\begin{proof}
     Similar to the proof of Lemma \ref{eventbound1}, we have
\begin{align}
    \PP\left(   \widehat{\sigma}_{2,t}^2 \leq  c_1 \sigma_2^2 \right) &\leq \frac{\delta}{4}, 
\end{align}
using Lemma \ref{lowerchernoff}. By Lemma \ref{covarianceconcentrationlemma}, we also have
    \begin{align}
        \PP \left(   \widehat{\sigma}_{2,t}^2 \leq  \sigma_2^2  \left[1 -c_0\left\{ \frac{\log(8/\delta)}{t} \vee \sqrt{\frac{\log(8/\delta)}{t}}\right\}  \right]  \right) &\leq \frac{\delta}{8}, 
        \intertext{and}
        \PP \left(    \widehat{\sigma}_{1,t}^2 \geq \sigma_1^2 \left[ 1 + c_0 \left\{ \frac{\log(8/\delta)}{t} \vee \sqrt{\frac{\log(8/\delta)}{t}}\right\} \right]  \right)  &\leq \frac{\delta}{8}.
    \end{align}
    It follows that $\PP\left(\mathcal{E}_3 \cap \mathcal{E}_4\right) \geq 1 - \PP\left( \mathcal{E}_3^\complement\right) - \PP\left( \mathcal{E}_4^\complement\right) \geq 1 - \delta/2$. 
    
\end{proof}

\begin{lemma}\label{eventbound3}
Let $X_1, \ldots, X_n$ be $p$-dimensional random variables satisfying Assumption \ref{assmultivariate2} for some $w,u>0$. Assume further that $\EE X_i X_i^\top = \Sigma$ for all $i\in[n]$ and some positive definite matrix $\Sigma \in \RR^{p\times p}$.  Let $\widehat{\Sigma}_{1,t} = t^{-1} \sum_{i=1}^t X_i X_i^\top$ and $\widehat{\Sigma}_{2,t} = t^{-1} \sum_{i=n-t+1}^n X_i X_i^\top$. Define $\gamma(s) = \gamma(p,n,s)=  s\log(ep/s)\vee \log\log(8n)$ and for any $s \in [p]$ such that $\gamma(s)\leq n$, let 
\begin{align}
    \widehat{\sigma}^2_s = \lambda_{\max}^s(\widehat{\Sigma}_{1, \lceil \gamma(s) \rceil})  \wedge \lambda_{\max}^s(\widehat{\Sigma}_{2, \lceil \gamma(s) \rceil}),
\end{align}
where $\lambda_{\max}^s(\cdot)$ is defined in \eqref{firsteigsparse}.  
For any fixed $\delta\in (0,1)$, define the events
    \begin{align}
    \mathcal{E}_5 &= \bigcap_{\substack{s \in \mathcal{S};\\ \gamma(s)\leq n}}\left\{ c_4 \leq \frac{\widehat{\sigma}^2_s}{\lambda_{\max}^s(\Sigma) }\right\},\\
    \mathcal{E}_6 &=  \bigcap_{t \in \mathcal{T}} \bigcap_{s \in \mathcal{S}} \left\{ \lambda_{\max}^s (\widehat{\Sigma}_{1,t} - \widehat{\Sigma}_{2,t})  \leq c_5 \lambda_{\max}^s(\Sigma) r(p,n,s,t) \right\},
    \end{align}
    where $\mathcal{S}$ is given in \eqref{mathcals}, $\mathcal{T}$ is given in \eqref{mathcalt},  $r(p,n,s,t)$ is given in \eqref{rdef}, $c_4 =  \delta^2 (\delta^2 + 16\delta + 64)^{-1}(2e\pi w^2)^{-1}$, $c_5 = 4c_0 \log(1 + 16/\delta)$, and $c_0$ is the constant from Lemma \ref{covarianceconcentrationlemma} depending only on $u>0$. Then $\PP\left( \mathcal{E}_5 \cap \mathcal{E}_6 \right)\geq 1- \delta/2$.
\end{lemma}
\begin{proof}
    We first show that  $\PP(\mathcal{E}_5^\complement)\leq \delta/4$. Fix any $s\in \mathcal{S}$, and let $v \in S^{p-1}_s$ be an $s$-sparse vector satisfying $v^\top \Sigma v = \lambda_{\max}^s(\Sigma)$.\footnote{Note that such a $v$ always exists, since $S^{p-1}_s = \{v \in S^{p-1} \ ; \ \normm{v}_0\leq s\}$ is a closed (and thus compact) subset of $S^{p-1}$ for any $s \in [p]$. To see this, consider any sequence $\{v_i\}$ in $S^{p-1}_s$ converging to some $v \in S^{p-1}$. Suppose for contradiction that $\normm{v}_0 >s$, and let $0<\epsilon < \min\{ |v(j) : v(j)\neq 0\}$. There exists an $i$ such that $\normm{v_i - v}_2 < \epsilon$. In particular, since $v_i \in S^{p-1}_s$ and $\normm{v}_0>s$ it must be that $v_{i}(j)=0$ and $v(j) \neq 0$ for some $j$. For this $j$, we have $|v(j)| = |v(j) - v_{i}(j)| < \epsilon$, which is a contradiction. }  By Lemma \ref{lowerchernoff} and a union bound, we have that
    \begin{align}   
    \PP \left(\mathcal{E}_5^\complement\right) &= \PP \left(  \bigcup_{\substack{s \in \mathcal{S};\\ \gamma(s)\leq n}}\left\{ \frac{\widehat{\sigma}^2_s}{\lambda_{\max}^s(\Sigma)} \leq c_4 \right\}\right)\\
    &\leq 2 \sum_{\substack{s \in \mathcal{S};\\ \gamma(s) \leq n}} \PP \left(   \lambda_{\max}^{s}\left(  \lceil \gamma(s) \rceil^{-1}\sum_{i=1}^{\lceil \gamma(s) \rceil} X_i X_i^\top  \right)  \leq  c_4 \lambda_{\max}^s(\Sigma) \right)\\
     &\leq 2 \sum_{\substack{s \in \mathcal{S};\\ \gamma(s) \leq n}} \PP \left(   \sum_{i=1}^{\lceil \gamma(s) \rceil} (v^\top X_i)^2    \leq  \lceil \gamma(s) \rceil c_4 v^\top \Sigma v \right)\\
    &\leq  2 \sum_{s\in \mathcal{S}} \exp \left[ \frac{\lceil \gamma(s) \rceil}{2} \left\{1 + \log\left(2\pi w^2\right) - \log\left(\frac{1}{c_4}\right) \right\} \right]\\
    &\leq  2 \sum_{s\in \mathcal{S}} \exp \left\{  - \frac{1}{2}{s\log\left( \frac{ep}{s}\right)}  \log\left(\frac{1}{2e c_4\pi w^2}\right)  \right\}\\
    &\leq  2 \sum_{s\in \mathcal{S}} \exp \left\{  - \frac{1}{2}{s}  \log\left(\frac{1}{2e c_4\pi w^2}\right)  \right\}\\
    &\leq 2 \sum_{s=1}^{\infty} (2ec_4 \pi w^2)^{s/2}\\
    &\leq \frac{2}{(2ec_4 \pi w^2)^{-1/2}-1}\\
    &= \delta / 4,
\end{align}
where we in the third inequality used Assumption \ref{assmultivariate2-b}, and in the last equality used the definition of $c_4$. It follows that $\PP(\mathcal{E}_5) \geq 1-\delta/4$. 

Now consider $\mathcal{E}_6$.  Lemma \ref{covarianceconcentrationlemma} implies that
    \begin{align}
        \PP\left (\mathcal{E}_6^\complement \right ) &\leq \sum_{t \in \mathcal{T}} \sum_{s \in \mathcal{S}} \PP \left\{ \lambda_{\max}^s \left( {\widehat{\Sigma}_{1,t} - \widehat{\Sigma}_{2,t}} \right) \geq c_5 \lambda_{\max}^s (\Sigma) r(p,n,s,t) \right\}\\
         &\leq 2 \sum_{t \in \mathcal{T}} \sum_{s \in \mathcal{S}} \PP \left\{ \lambda_{\max}^s \left( {\widehat{\Sigma}_{1,t} - \Sigma} \right) \geq \frac{c_5}{2}\lambda_{\max}^s(\Sigma) r(p,n,s,t) \right\}\\
         &\leq 2 \sum_{t \in \mathcal{T}} \sum_{s \in \mathcal{S}}  \exp \left[  - \frac{c_5}{2c_0 } \left\{ s \log (ep/s) \vee \log\log(8n) \right\}  \right]\\
         &\leq 2  \sum_{t \in \mathcal{T}} \sum_{s \in \mathcal{S}}  \exp \left[  - \frac{c_5}{4c_0 } \left\{ s \log (ep/s) + \log\log(8n) \right\}  \right]\\
        &\leq 2\left|\mathcal{T}\right|  \exp \left\{   -\frac{c_5}{4c_0} \log\log(8n)  \right\} \sum_{s \in \mathcal{S}} \exp\left \{-\frac{c_5}{4c_0} s \log\left(\frac{ep}{s}\right)\right\}\\
        &\leq 2 \frac{1 + \lfloor \log_2(n/2) \rfloor}{\log(8n)^{c_5 / (4c_0)}} \sum_{s=1}^{\infty} \exp\left(  -\frac{c_5}{4c_0}s   \right)\\
        &\leq \underset{n \in \NN  \ ; \ n\geq 2}{\sup} \frac{ 1 + \lfloor \log_2(n/2) \rfloor}{\log(8n)} \frac{2}{\exp\left\{ \frac{c_5}{4c_0}\right\}-1}\\
        &\leq  \frac{4}{\exp\left\{ \frac{c_5}{4c_0}\right\}-1}\\
        &=\delta/4,
    \end{align}
    where we used that $c_5 / (4c_0) \geq 1$ in the third and second last inequalities and the definition of $c_5$ in the last. The proof is complete. 
    
\end{proof}

\begin{lemma}\label{eventbound4}
Let $X_1, \ldots, X_n$ be $p$-dimensional random variables satisfying Assumption \ref{assmultivariate2} for some $w,u>0$. For some $t \leq \lfloor n/2 \rfloor$, assume that $\EE X_i X_i^\top = \Sigma_1$ for $i \leq t$ , and $\EE X_i X_i^\top = \Sigma_2$ for $i \geq n - t+1$, where $\Sigma_1$ and $\Sigma_2$ are two positive definite matrices in $\RR^{p \times p}$.  Let $\widehat{\Sigma}_{1,t} = t^{-1} \sum_{i=1}^t X_i X_i^\top$ and $\widehat{\Sigma}_{2,t} = t^{-1} \sum_{i=n-t+1}^n X_i X_i^\top$, and let $\gamma(s) = \gamma(p,n,s) =  s\log(ep/s)\vee \log\log(8n)$. For any fixed $s \in [p]$ such that $\gamma(s) \leq n$ define
\begin{align}
    \widehat{\sigma}^2_s = \lambda_{\max}^s(\widehat{\Sigma}_{1, \lceil \gamma(s) \rceil})  \wedge \lambda_{\max}^s(\widehat{\Sigma}_{2,\lceil \gamma(s) \rceil}),
\end{align}
where $\lambda_{\max}^s(\cdot)$ is defined in \eqref{firsteigsparse}. 
For the same $s$ and any $\delta\in (0,1)$, define the events
    \begin{align}
    \mathcal{E}_7 &=  \left\{\frac{\widehat{\sigma}^2_s}{\lambda_{\max}^s (\Sigma_1) \wedge \lambda_{\max}^s(\Sigma_2) } \leq c_3\right\},\\
    \mathcal{E}_8 &=  \bigcap_{i=1,2} \left\{ \lambda_{\max}^s ({\widehat{\Sigma}_{i,t} - \Sigma_i} ) \leq \frac{c_5}{2} \lambda_{\max}^s(\Sigma_i) r(p,n,s,t) \right\},
    \end{align}
    where $r(p,n,s,t)$ is given in \eqref{rdef}, $c_3 = 1 + c_0 \log(1+4/\delta)$, and $c_5 = 4c_0 \log(1 + 16/\delta)$, and $c_0$ is the constant from Lemma \ref{covarianceconcentrationlemma} depending only on $u>0$. Then $\PP\left( \mathcal{E}_7 \cap \mathcal{E}_8 \right)\geq 1- \delta/2$.
\end{lemma}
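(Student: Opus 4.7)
The plan is to bound $\PP(\mathcal{E}_7^c)$ and $\PP(\mathcal{E}_8^c)$ each by $\delta/4$ and conclude by a union bound, in the spirit of Lemmas \ref{eventbound2} and \ref{eventbound3}.

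For $\mathcal{E}_7$, the key observation is that $\widehat{\sigma}_s^2$ is a minimum of two quantities, so if $i^* \in \{1,2\}$ attains $\lambda_{\max}^s(\Sigma_1) \wedge \lambda_{\max}^s(\Sigma_2)$, then $\widehat{\sigma}_s^2 \leq \lambda_{\max}^s(\widehat{\Sigma}^{(i^*)})$. Since $i^*$ is deterministic (depending only on the true covariances), this reduces $\mathcal{E}_7^c$ to the single event $\{\lambda_{\max}^s(\widehat{\Sigma}^{(i^*)}) > c_3 \lambda_{\max}^s(\Sigma_{i^*})\}$, avoiding any union bound. Using sub-additivity $\lambda_{\max}^s(\widehat{\Sigma}^{(i^*)}) \leq \lambda_{\max}^s(\Sigma_{i^*}) + \lambda_{\max}^s(\widehat{\Sigma}^{(i^*)} - \Sigma_{i^*})$ and applying Lemma \ref{covarianceconcentrationlemma} with $n = \gamma$ samples and $x = \gamma \log(1+4/\delta)$, the $s\log(ep/s)$-terms in the concentration bound are at most $1$ (since $\gamma \geq s\log(ep/s)$) and hence dominated by $\log(1+4/\delta) \geq 1$. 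This yields $\lambda_{\max}^s(\widehat{\Sigma}^{(i^*)} - \Sigma_{i^*}) \leq c_0 \log(1+4/\delta) \lambda_{\max}^s(\Sigma_{i^*})$ with probability at least $1 - (1+4/\delta)^{-\gamma} \geq 1 - \delta/4$, precisely matching the definition of $c_3$.

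For $\mathcal{E}_8$, I would apply Lemma \ref{covarianceconcentrationlemma} separately to $\widehat{\Sigma}_{i,t} - \Sigma_i$ for each $i \in \{1,2\}$, with $x = 2 \log(1+16/\delta) \log\log(8n)$. By definition of $r(p,n,s,t)$, the $s\log(ep/s)/t$-terms are absorbed into $r$, and since $2\log(1+16/\delta) \geq 1$, the $\sqrt{x/t} \vee x/t$-terms are bounded by $2\log(1+16/\delta) \cdot r(p,n,s,t)$. The max of the four terms in the concentration bound is therefore at most $2\log(1+16/\delta) r(p,n,s,t)$, producing the required bound $(c_5/2)\lambda_{\max}^s(\Sigma_i) r(p,n,s,t)$ with failure probability $e^{-x} = \log(8n)^{-2\log(1+16/\delta)} \leq \delta/8$, using $\log\log(8n) \geq 1$ for $n \geq 2$ and the elementary inequality $(1+16/\delta)^2 \geq 8/\delta$ for $\delta \in (0,1)$. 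A union bound over $i \in \{1,2\}$ then gives $\PP(\mathcal{E}_8^c) \leq \delta/4$.

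The main obstacle is purely bookkeeping: choosing $x$ in each application of Lemma \ref{covarianceconcentrationlemma} so that the resulting deviation bound produces exactly the constants $c_3$ and $c_5$ in the statement, rather than constants of merely the right order. Otherwise the argument reduces to straightforward applications of Lemma \ref{covarianceconcentrationlemma} (in its second form, justified by Assumption \ref{assmultivariate2-c}) together with the sub-additivity of $\lambda_{\max}^s$.
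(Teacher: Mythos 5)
Your proof is correct and follows essentially the same route as the paper: for $\mathcal{E}_7$ it reduces to the single event $\{\lambda_{\max}^s(\widehat{\Sigma}^{(i^*)}) > c_3 \lambda_{\max}^s(\Sigma_{i^*})\}$ via $\widehat{\sigma}^2_s \leq \lambda_{\max}^s(\widehat{\Sigma}^{(i^*)})$, then applies sub-additivity and Lemma \ref{covarianceconcentrationlemma} with $x = \gamma\log(1+4/\delta)$; for $\mathcal{E}_8$ it applies Lemma \ref{covarianceconcentrationlemma} to each $i\in\{1,2\}$ and takes a union bound. The only cosmetic difference is your choice $x = 2\log(1+16/\delta)\log\log(8n)$ for $\mathcal{E}_8$ (the paper implicitly takes $x = (c_5/2c_0)\{s\log(ep/s)\vee\log\log(8n)\}$), which yields a marginally weaker but still sufficient failure probability.
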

\begin{proof}
We first show that $\PP(\mathcal{E}_7^\complement)\leq \delta/4$. Without loss of generality, assume that $\lambda_{\max}^s(\Sigma_1) \leq  \lambda_{\max}^s(\Sigma_2)$. We have
    \begin{align}
    \PP\left(\mathcal{E}_7^\complement\right)& \leq  
    \PP \left\{ \frac{\lambda_{\max}^s (\widehat{\Sigma}_{1, \lceil \gamma(s) \rceil})}{\lambda_{\max}^s(\Sigma_1)} \geq c_3  \right\} \\
    &= \PP\left\{  \lambda_{\max}^s \left( \lceil \gamma(s) \rceil^{-1}\sum_{i=1}^{\lceil \gamma(s) \rceil} X_i X_i^\top \right) \geq c_3 \lambda_{\max}^s(\Sigma_1) \right\}\\
    &\leq \PP\left\{ \lambda_{\max}^s \left( \lceil \gamma(s) \rceil^{-1}\sum_{i=1}^{\lceil \gamma(s) \rceil} X_i X_i^\top  - \Sigma \right) \geq (c_3 -1) \lambda_{\max}^s(\Sigma_1) \right\}.
    \intertext{Now we apply Lemma \ref{covarianceconcentrationlemma} with $x = c_0^{-1}(c_3 -1)  \gamma(s)$ and $c_0$ the constant from that Lemma to obtain}
    \PP \left\{   \frac{\widehat{\sigma}^2_s}{\lambda_{\max}^s(\Sigma_1)} \geq c_3 \right\} &\leq \exp\left\{ -(c_3 - 1) c_0^{-1} s \log(ep/s)\right\}\\
    &\leq \frac{1}{\exp\left\{ (c_3 - 1) c_0^{-1} \right\}-1}\\
    &\leq \frac{\delta}{4},
\end{align}
here using the definition of $c_3$. It follows that $\PP(\mathcal{E}_7) \geq 1 - \delta/4$.

Now consider $\mathcal{E}_8$.  Similar to the proof of Lemma \ref{eventbound3}, we apply Lemma \ref{covarianceconcentrationlemma} to obtain 
    \begin{align}
    \PP(\mathcal{E}_8^\complement) 
         &\leq 2 \PP \left\{ \lambda_{\max}^s ( {\widehat{\Sigma}_{1,t} - \Sigma}) \geq \frac{c_5}{2}\lambda_{\max}^s(\Sigma_1) r(p,n,s,t) \right\}\\
        &\leq  \frac{4}{\exp\left\{ \frac{c_3}{4c_0}\right\}-1}\\
        &\leq \delta/4,
    \end{align}
    using the definition of $c_5$. The proof is complete. 
    
\end{proof}

\begin{lemma}\label{eventbound5}
Assume $n \geq \log(ep)$ and let $X_1, \ldots, X_n$ be $p$-dimensional random variables satisfying Assumption \ref{assmultivariate2} for some $w,u>0$. Assume further that $\EE X_i X_i^\top = \Sigma$ for all $i \in [n]$ and some positive definite matrix $\Sigma \in \RR^{p\times p}$. Let $\widehat{\Sigma}_{1,t} = t^{-1} \sum_{i=1}^t X_i X_i^\top$ and $\widehat{\Sigma}_{2,t} = t^{-1} \sum_{i=n-t+1}^n X_i X_i^\top$ and let
\begin{align}
    \widehat{\sigma}^2_{\mathrm{con}} &= \widehat{\lambda}^1_{\max} (\widehat{\Sigma}_{1,\lceil \log(ep)\rceil})  \wedge \widehat{\lambda}^1_{\max} (\widehat{\Sigma}_{2,\lceil \log(ep)\rceil}),
\end{align}
where $\widehat{\lambda}_{\max}^{s}(\cdot)$ is defined in \eqref{conveigdef}. 
For any $\delta\in (0,1)$, define the events
    \begin{align}
    \mathcal{E}_9 &=  \left\{ c_4 \leq \frac{\widehat{\sigma}^2_{\mathrm{con}}}{\lambda_{\max}^1({\Sigma})} \right\},\\
    \mathcal{E}_{10} &=  \bigcap_{t \in \mathcal{T}} \bigcap_{s \in \mathcal{S}} \left\{ \widehat{\lambda}_{\max}^s ( {\widehat{\Sigma}_{1,t} - \widehat{\Sigma}_{2,t}} ) \leq c_6 \lambda_{\max}^1({\Sigma}) h(p,n,s,t) \right\},
    \end{align}
    where $\mathcal{S}$ is given in \eqref{mathcals}, $\mathcal{T}$ is given in \eqref{mathcalt}, $\lambda_{\max}^s(\cdot)$ is defined in \eqref{firsteigsparse}, $h(p,n,s,t)$ is given in \eqref{hdef}, $c_4 =  \delta^2 (\delta^2 + 16\delta + 64)^{-1}(2e\pi w^2)^{-1}$,  $c_6 = c_0\{1 + \log(4/\delta) \}$, and $c_0$ is the constant from Lemma \ref{covarianceconcentrationlemma} depending only on $u>0$. Then $\PP\left( \mathcal{E}_9 \cap \mathcal{E}_{10}\right)\geq 1- \delta/2$.
\end{lemma}
\begin{proof}
    We first show that  $\PP(\mathcal{E}_9^\complement)\leq \delta/4$. Since $\widehat{\lambda}_{\max}^{s} (\cdot)$ is a convex relaxation of the implicit optimization problem giving $\lambda_{\max}^s(\cdot)$, we have that
    \begin{align}
        \widehat{\lambda}_{\max}^1 (A) \geq \lambda_{\max}^1 (A)
    \end{align}
    for any symmetric matrix $A$. By symmetry and the same arguments as in the proof of Lemma \ref{eventbound3} it follows that
    \begin{align}
        \PP(\mathcal{E}_9) &=  1 - 2 \PP \left\{     { \lambda^1_{\max} (\widehat{\Sigma}_{1,\lceil \log(ep)\rceil}) } < c_4 \lambda_{\max}^1({\Sigma}) \right\}  \\
        &\geq 1 - \delta/4.
    \end{align}

Now consider $\mathcal{E}_{10}$. Note that by a union bound, we have
\begin{align}
    \PP(\mathcal{E}_{10}^\complement) &\leq \sum_{t \in \mathcal{T}} \sum_{s \in \mathcal{S}} \PP \left \{   \widehat{\lambda}_{\max}^s ( {\widehat{\Sigma}_{1,t} - \widehat{\Sigma}_{2,t}} ) > c_6 \lambda_{\max}^1({\Sigma}) h(p,n,s,t)   \right\}
\end{align}
For any $t \in \mathcal{T}$ and $s \in \mathcal{S}$, Lemma \ref{duallemma} implies that
\begin{align}
    \widehat{\lambda}_{\max}^s ( {\widehat{\Sigma}_{1,t} - \widehat{\Sigma}_{2,t}} ) &\leq s\normm{{\widehat{\Sigma}_{1,t} - \widehat{\Sigma}_{2,t}}}_{\infty}\\
    &\leq  s\normm{{\widehat{\Sigma}_{1,t} - \Sigma_1}}_{\infty} + s\normm{{\widehat{\Sigma}_{2,t} - \Sigma_2}}_{\infty}
\end{align}
Now, let $Y_{i,t,j,k}$ denote the $(j,k)$-th element of $\widehat{\Sigma}_{i,t} - \Sigma_i$, for $i = 1,2$, and $j,k \in [p]$, so that e.g. 
\begin{align}
    Y_{1,t, j,k} = \frac{1}{t} \sum_{i=1}^t \left\{ X_i(j) X_i(k) - \EE X_i(j)X_i(k)\right\}. 
\end{align}
By symmetry, we have that
\begin{align}
    \PP(\mathcal{E}_{10}^\complement)  &\leq 2 \sum_{t \in \mathcal{T}} \sum_{s \in \mathcal{S}} \sum_{j \in [p]} \sum_{k \in [p]} \PP \left\{ | Y_{1,t,j,k}| > \frac{c_6}{2s} \lambda_{\max}^1(\Sigma) h(p,n,s,t)    \right\}. 
\end{align}
Due to Lemma 2.7.7 in \cite{Vershynin_2018} and Assumption \ref{assmultivariate2}, we have that
\begin{align}
    \normm{X_i(j) X_i(k)}_{\Psi_1} &\leq \normmo{X_i(j)} \normmo{X_i(k)}\\
    &\leq u \lambda_{\max}^1(\Sigma),
\end{align}
for all $i \in [n]$ and $(j,k) \in [p]\times [p]$. 

From Bernstein's Inequality (Theorem 2.8.1 in \citeauthor{Vershynin_2018} \citeyear{Vershynin_2018}), we thus have
    \begin{align}
        \PP \left[ | Y_{1,t,j,k}| \geq  c u \lambda_{\max}^1 (\Sigma) \left( \sqrt{\frac{x}{t}} \vee \frac{x}{t}\right) \right] &\leq 2e^{-x},
    \end{align}
    for $x>0$ and the same absolute constant $c\geq1$ as in Lemma \ref{covarianceconcentrationlemma}. Taking $x = 8 t c_6 c_0^{-1} \{ \log (ep)\vee \log\log(8n)\}$, where $c_0 = 16cu$ is the constant from Lemma \ref{covarianceconcentrationlemma}, we obtain
    \begin{align}
         \PP \left\{ | Y_{1,t,j,k}| > \frac{c_6}{2s} \lambda_{\max}^1(\Sigma) h(p,n,s,t)    \right\}  &\leq 2 \exp\left[- \frac{8c_6}{c_0}  \left\{ \log(ep) \vee \log \log(8n)\right\}\right]\\
         &\leq 2 \exp\left[- \frac{4c_6}{c_0}  \left\{ \log(ep) + \log \log(8n)\right\}\right]. 
    \end{align}
By a union bound, it follows that
\begin{align}
     \PP(\mathcal{E}_{10}^\complement)  &\leq  4 |\mathcal{T}| \ |\mathcal{S}|  p^2 \log(8n)^{- 4c_6 /c_0} (ep)^{-4 c_6/c_0} \exp \left\{ - \frac{4 c_6}{c_0} \log(ep) \right\}. 
     \intertext{Due to the definition of $c_6$, we have $4 c_6 / c_0 \geq 3$. It follows that }
      \PP(\mathcal{E}_{10}^\complement)  &\leq  4 e^{-3} {\frac{1 + \lfloor \log_2(n/2)\rfloor}{\log(8n)} } \log(8n)^{ 1- 4c_6 /c_0} p^{-1}  \left\{ 1 + \lfloor \log_2(p)\rfloor) \right\} \\
      &\leq  \log(8n)^{ 1- 4c_6 /c_0}\\
      &\leq \log(16)^{ 1- 4c_6 /c_0}\\
      &\leq \delta/4,
\end{align}
where we third last inequality used that $\sup_{p \geq 1} \ p^{-1} ( 1 + \lfloor \log_2 p \rfloor ) = 1$ and $\sup_{n \geq 2} \{ 1 + \lfloor \log_2 (n/2) \} / \log(8n) \leq 2$,  and the definition of $c_6$ in the last. The proof is complete.

\end{proof}

\begin{lemma}\label{eventbound6}
Assume $n \geq \log(ep)$ and let $X_1, \ldots, X_n$ be $p$-dimensional random variables satisfying Assumption \ref{assmultivariate2} for some $w,u>0$. For some $t \leq \lfloor n/2 \rfloor$, assume that $\EE X_i X_i^\top = \Sigma_1$ for $i \leq t$ , and $\EE X_i X_i^\top = \Sigma_2$ for $i \geq n - t+1$, where $\Sigma_1$ and $\Sigma_2$ are two positive definite matrices in $\RR^{p \times p}$. Let $\widehat{\Sigma}_{1,t} = t^{-1} \sum_{i=1}^t X_i X_i^\top$ and $\widehat{\Sigma}_{2,t} = t^{-1} \sum_{i=n-t+1}^n X_i X_i^\top$ and let
\begin{align}
    \widehat{\sigma}^2_{\mathrm{con}} &= \widehat{\lambda}^1_{\max} (\widehat{\Sigma}_{1,\lceil \log(ep)\rceil})  \wedge \widehat{\lambda}^1_{\max} (\widehat{\Sigma}_{2,\lceil \log(ep)\rceil}),
\end{align}
where $\widehat{\lambda}_{\max}^{s}(\cdot)$ is defined in \eqref{conveigdef}. For any fixed $\delta\in (0,1)$ and $s \in [p]$, define the events
    \begin{align}
    \mathcal{E}_{11} &=  \left\{\frac{\widehat{\sigma}^2_{\mathrm{con}}}{\lambda_{\max}^1(\Sigma_1) \wedge \lambda_{\max}^1(\Sigma_2)} \leq c_3\right\},\\
    \mathcal{E}_{12} &=  \bigcap_{i=1,2} \left\{ \widehat{\lambda}_{\max}^s ( {\widehat{\Sigma}_{i,t} - \Sigma_i}) \leq \frac{c_6}{2} \lambda_{\max}^1({\Sigma_i}) h(p,n,s,t) \right\},
    \end{align}
    where $h(p,n,s,t)$ is given in \eqref{hdef}, $\lambda_{\max}^s(\cdot)$ is defined in \eqref{firsteigsparse}, $c_3 = 1 + c_0 \log(1+4/\delta)$, $c_6 = c_0\{1 + \log(4/\delta)\}$, and $c_0$ is the constant from Lemma \ref{covarianceconcentrationlemma} depending only on $u>0$. Then $\PP\left( \mathcal{E}_{11} \cap \mathcal{E}_{12} \right)\geq 1- \delta/2$.
\end{lemma}
\begin{proof}
We first show that $\PP(\mathcal{E}_{11}^\complement)\leq \delta/4$. Without loss of generality, assume that $\lambda_{\max}^1(\Sigma_1) \leq  \lambda_{\max}^1(\Sigma_2)$. Observe that $\widehat{\lambda}_{\max}^1(A) = \lambda_{\max}^1(A) = \normm{A}_{\infty}$ for any positive definite matrix $A$. In particular, we have
\begin{align}
    \PP (\mathcal{E}_{11}^\complement) &\leq  \PP \left\{ {\lambda}^1_{\max} (\widehat{\Sigma}_{1,\lceil \log(ep)\rceil}) > c_3 \lambda_{\max}^1(\Sigma_1) \right\}. 
\end{align}
Using the same arguments as in the proof of Lemma \ref{eventbound4} it then follows that $\PP (\mathcal{E}_{11}^\complement) \leq \delta/4$. 
Moreover,  using the same arguments as in the proof of Lemma \ref{eventbound5}, we obtain $\PP(\mathcal{E}_{12}^\complement)\leq \delta/4$, 
   and the proof is complete. 
\end{proof}

\begin{lemma}\label{lemmachisqbound}
    Let $p\geq 1$ and $n\geq 2$. For $i = 1,2$, let $V_i\in \RR^{pn\times pn}$ be given by $V_i = \text{Diag}( \left\{ V_{j,i} \right\}_{j=1}^n )$, where $V_{j,i} = \sigma^2 I - \kappa_i u_i u_i^\top$ for $j \leq \Delta_i$ and $V_{j,i} = \sigma^2 I$ for $j>\Delta_i$, where $\sigma>0$, $1 \leq \Delta_i \leq n$, $0 < \kappa_i < \sigma^2$ and $u_i \in \mathcal{S}^{p-1}$ for $i=1,2$. Assume that $\Delta_1 \leq \Delta_2$. If $X \sim \text{N}(0, \sigma^2 I)$, then 
    \begin{align}
        \EE \left\{   \frac{\phi_{V_1}(X) \phi_{V_2}(X)}{\phi_{\sigma^2I}^2(X)}      \right\} &\leq \exp\left[   \frac{1}{2} \inner{u_1}{u_2}^2\left\{  \sqrt{\frac{\Delta_1}{\Delta_2}} (\Delta_2 \alpha_2^2)^{1/2} (\Delta_1 \alpha_1^2)^{1/2}  \wedge  \Delta_1 \alpha_1 \right\} \right],
    \end{align}
    where $\alpha_i = \kappa_i (\sigma^2 - \kappa_i)^{-1}$ for $i=1,2$.
\end{lemma}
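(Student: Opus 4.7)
The plan is to exploit the block-diagonal structure of $V_1$, $V_2$ and $\sigma^2I$. Under $X\sim\text{N}(0,\sigma^2 I)$ the $p$-dimensional blocks $X_j$ are independent, so the density ratio factorizes as a product over $j=1,\ldots,n$ of block-wise ratios, and by independence the expectation factorizes too. For $j>\Delta_2$ we have $V_{j,1}=V_{j,2}=\sigma^2 I$, so the block factor is identically $1$; for $\Delta_1<j\le\Delta_2$ only $V_{j,2}$ differs from $\sigma^2 I$ and the factor reduces to $\phi_{V_{j,2}}/\phi_{\sigma^2 I}$, whose expectation under $\phi_{\sigma^2 I}$ is $1$. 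The full expectation therefore equals the $\Delta_1$-th power of the single-block expectation for $j\le\Delta_1$.

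For such an overlap block, Sherman--Morrison gives $V_{j,i}^{-1}=\sigma^{-2}(I+\alpha_i u_i u_i^\top)$ and $|V_{j,i}|=\sigma^{2p}/(1+\alpha_i)$, so that
\[
\frac{\phi_{V_{j,1}}(x)\phi_{V_{j,2}}(x)}{\phi_{\sigma^2 I}^2(x)}=\sqrt{(1+\alpha_1)(1+\alpha_2)}\exp\!\left(-\frac{\alpha_1(u_1^\top x)^2}{2\sigma^2}-\frac{\alpha_2(u_2^\top x)^2}{2\sigma^2}\right).
\]
Setting $Y_i=u_i^\top X_j/\sigma$, the pair $(Y_1,Y_2)$ is centred bivariate Gaussian with correlation $\inner{u_1}{u_2}$, so I would apply the standard identity $\EE\exp(-\tfrac{1}{2}Y^\top A Y)=|I+A\Sigma_Y|^{-1/2}$ with $A=\mathrm{diag}(\alpha_1,\alpha_2)$ and compute the $2\times 2$ determinant $(1+\alpha_1)(1+\alpha_2)-\alpha_1\alpha_2\inner{u_1}{u_2}^2$ directly. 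After raising to the $\Delta_1$-th power this gives
\[
\EE\!\left[\frac{\phi_{V_1}(X)\phi_{V_2}(X)}{\phi_{\sigma^2 I}^2(X)}\right]=\left(1-\frac{\alpha_1\alpha_2\inner{u_1}{u_2}^2}{(1+\alpha_1)(1+\alpha_2)}\right)^{-\Delta_1/2}.
\]

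To convert this exact formula into the stated exponential bound I would take logarithms and apply $-\log(1-x)\le x/(1-x)$ with $x=\alpha_1\alpha_2\inner{u_1}{u_2}^2/\{(1+\alpha_1)(1+\alpha_2)\}$. The resulting denominator equals $1+\alpha_1+\alpha_2+\alpha_1\alpha_2(1-\inner{u_1}{u_2}^2)$, which is bounded below by both $1$ and by $1+\alpha_2$; these two lower bounds yield the two upper estimates $\log\EE\le\tfrac{1}{2}\Delta_1\alpha_1\alpha_2\inner{u_1}{u_2}^2$ and $\log\EE\le\tfrac{1}{2}\Delta_1\alpha_1\inner{u_1}{u_2}^2$, respectively, and taking their minimum matches the right-hand side of the lemma once one observes that $\sqrt{\Delta_1/\Delta_2}\,(\Delta_2\alpha_2^2)^{1/2}(\Delta_1\alpha_1^2)^{1/2}$ simplifies to $\Delta_1\alpha_1\alpha_2$. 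The only nontrivial calculation is the $2\times 2$ Gaussian MGF evaluation; the rest is algebraic bookkeeping.
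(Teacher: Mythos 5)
Your proof is correct and follows the same structural route as the paper: factorize over the independent $p$-dimensional blocks, observe that only the $\Delta_1$ overlap blocks contribute nontrivially, compute the overlap-block expectation exactly, and then apply elementary inequalities to the resulting ratio. The only cosmetic difference is that you evaluate $\EE\exp\{-\tfrac{1}{2}(\alpha_1 Y_1^2 + \alpha_2 Y_2^2)\}$ directly via the $2\times 2$ determinant identity $\det(I+A\Sigma_Y)^{-1/2}$, whereas the paper first diagonalizes the rank-two matrix $\alpha_1 u_1 u_1^\top + \alpha_2 u_2 u_2^\top$ and integrates two independent $\chi^2_1$ variables, but both routes yield the identical determinant $(1+\alpha_1)(1+\alpha_2)-\alpha_1\alpha_2\langle u_1,u_2\rangle^2$ and the same final bound.
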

\begin{proof}
We have that 
    \begin{align}
    & \quad \ \EE \left\{   \frac{\phi_{V_1}(X) \phi_{V_2}(X)}{\phi_{\sigma^2I}^2(X)}      \right\}\\
    &= \EE  \left[  \frac{\sigma^{2np}}{\det\left(V_1\right)^{1/2}\det\left(V_2\right)^{1/2}} \exp \left\{  -\frac{1}{2} X^\top \left( V_1^{-1} + V_2^{-1} - 2 \sigma^{-2} I \right)X  \right\}     \right].
\end{align}
Since $\det(V_i)=  \sigma^{2np}\left( 1 - \kappa_i /\sigma^2\right)^{\Delta_i} = \sigma^{2np} (1+ \alpha_i)^{\Delta_i}$, for $i=1,2$,  we obtain
\begin{align}
   \EE \left\{   \frac{\phi_{V_1}(X) \phi_{V_2}(X)}{\phi_{\sigma^2I}(X)}      \right\}&=   (1+\alpha_1)^{\Delta_1/2} (1+\alpha_2)^{\Delta_2/2} \\
   & \ \ \cdot \EE \left[   \exp \left\{  -\frac{1}{2} X^\top \left( V_1^{-1} + V_2^{-1} - 2 \sigma^{-2} I \right)X  \right\}     \right].
\end{align}
Since $V_i^{-1} = \text{Diag}\left(\left\{ V_{j,i}^{-1}\right\}_{j=1}^n\right)$ and $V_{j,i}^{-1} = \sigma^{-2} \left(I + \alpha_i u_i u_i^\top \ind\left \{j\leq \Delta_i\right\}\right)$, we obtain
\begin{align}
  \EE \left\{   \frac{\phi_{V_1}(X) \phi_{V_2}(X)}{\phi_{\sigma^2I}(X)}      \right\} &=(1+\alpha_1)^{\Delta_1/2} (1+\alpha_2)^{\Delta_2/2} \prod_{j=1}^n f_j, 
\end{align}
where
\begin{align}
    f_j &= \begin{cases} 1  & \text{ if } j > \Delta_2\\
    \EE   \ \exp \left(   - \frac{1}{2} \alpha_2 X^\top u_{2} u_{2}^\top X /\sigma^2 \right )  & \text{ if } \Delta_1 < j \leq \Delta_2 \\
    \EE \     \exp \left\{   - \frac{1}{2} X^\top \left( \alpha_1 u_{1} u_{1}^\top  + \alpha_2 u_{2} u_{2}^\top \right) X /\sigma^2\right \} & \text{ if }  j \leq \Delta_1.
    \end{cases}
\end{align}
For the case $\Delta_1 < j \leq \Delta_2$, we have that $f_j = \EE \exp \left( - \frac{\alpha_2}{2} Z^2\right) = (1 + \alpha_2)^{-1/2}$, where $Z \sim \text{N}(0,1)$.  For the case $1 \leq j \leq \Delta_1$ we have
\begin{align}
    I_j =& \EE\exp\left( - \frac{1}{2} \lambda_1 Z_1^2 - \frac{1}{2}\lambda_2 Z_2^2    \right),
\end{align}
where $Z_i \iid \text{N}(0,1)$ for $i=1,2$ and $\lambda_1, \lambda_2>0$ are the two (possibly) nonzero eigenvalues of the matrix $\alpha_1 u_{1} u_{1}^\top  + \alpha_2 u_{2} u_{2}^\top$, given by 
\begin{align}
    \lambda_1 &= \frac{\alpha_1 + \alpha_2 + \sqrt{(\alpha_1 - \alpha_2)^2 + 4\alpha_1 \alpha_2 \inner{u_1}{u_2}^2}}{2},\\
    \lambda_2 &= \frac{\alpha_1 + \alpha_2 - \sqrt{(\alpha_1 - \alpha_2)^2 + 4\alpha_1 \alpha_2 \inner{u_1}{u_2}^2}}{2}.
\end{align}
For $1\leq j \leq \Delta_1$, it thus follows  that
\begin{align}
    f_j &= \left( 1 + \lambda_1\right)^{-1/2}\left( 1 + \lambda_2\right)^{-1/2}\\
    &= \left\{ 1 + \alpha_1 + \alpha_2 + \alpha_1 \alpha_2 \left( 1- \inner{u_1}{u_2}^2 \right)  \right\}^{-1/2}.
\end{align}
We thus obtain
\begin{align}
   \EE \left\{   \frac{\phi_{V_1}(X) \phi_{V_2}(X)}{\phi_{\sigma^2I}(X)}      \right\}  &=  \left\{  \frac{(1+\alpha_1)(1+\alpha_2)}{1 + \alpha_1 + \alpha_2 + \alpha_1 \alpha_2 \left( 1- \inner{u_1}{u_2}^2 \right)}   \right\}^{\Delta_1/2}\\
    &\leq   \left(  1 + \frac{\alpha_1 \alpha_2 \inner{u_1}{u_2}^2}{1 + \alpha_1 + \alpha_2 }   \right)^{\Delta_1/2}\\
    &\leq  \exp \left( \frac{1}{2}\Delta_1 \frac{\alpha_1 \alpha_2 \inner{u_1}{u_2}^2}{1 + \alpha_1 + \alpha_2 }\right).\label{ineqrhs}
\end{align}
Observing that 

\begin{align}
    \Delta_1 \frac{\alpha_1 \alpha_2 }{1 + \alpha_1 + \alpha_2 } &\leq  \sqrt{\frac{\Delta_1}{\Delta_2}} \left(\Delta_2 \alpha_2\right)^{1/2} \left( \Delta_1 \alpha_1\right)^{1/2} \wedge  \Delta_1 \alpha_1,
\end{align}
the proof is complete. 
\end{proof}

\begin{lemma}\label{boundimplication}
Let  $n\geq 2$, $p \in \NN$ and $t_0\in [n-1]$. Let $\Sigma_1, \Sigma_2 \in \RR^{p\times p}$ be positive definite and symmetric matrices, and let $\sigma^2 = \normmop{\Sigma_1}\vee \normmop{\Sigma_2}$.  Define $\Delta = \Delta(t_0, n) = \min(t_0,n-t_0)$ and $\gamma = \gamma(p,n,s) =  s \log\left(\frac{ep}{s}\right) \vee \log \log(8n)$. Assume further that
\begin{align}
\Delta \left\{ \left( \frac{\lambda_{\max}^s (\Sigma_1 - \Sigma_2)}{\sigma^2 - \lambda_{\max}^s (\Sigma_1 - \Sigma_2)}  \right) \wedge \left( \frac{\lambda_{\max}^s (\Sigma_1 - \Sigma_2)}{\sigma^2 - \lambda_{\max}^s (\Sigma_1 - \Sigma_2)}\right)^2 \right\}&\geq c \gamma \label{necessarylemma1},
\end{align}
for some constant $c>0$, where $\lambda_{\max}^s(\cdot)$ is defined in \eqref{firsteigsparse}. Then we have
\begin{align}
    \underset{v \in \mathcal{S}^{p-1}_s}{\sup} \ \frac{v^\top \Sigma_1 v}{v^\top \Sigma_2 v} \vee \frac{v^\top \Sigma_2 v}{v^\top \Sigma_1 v} &\geq 1 + c \frac{ \gamma}{\Delta}\vee \sqrt{c \frac{ \gamma}{\Delta}}.\label{necessarylemma2}
\end{align}
In particular, Equation \eqref{necessarylemma2} is a necessary condition for Equation \eqref{necessarylemma1} to hold.
\end{lemma}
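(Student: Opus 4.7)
The plan is to reduce the conclusion to a scalar statement about the single quantity $A := \lambda_{\max}^s(\Sigma_1 - \Sigma_2)/\{\sigma^2 - \lambda_{\max}^s(\Sigma_1 - \Sigma_2)\}$, which appears in the hypothesis, and then translate this back into a statement about the supremum of the variance ratio via a single well-chosen test vector.

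First, I would pick $v^* \in S^{p-1}_s$ attaining $|v^{*\top}(\Sigma_1 - \Sigma_2)v^*| = \lambda_{\max}^s(\Sigma_1 - \Sigma_2)$ (such a $v^*$ exists by compactness of $S^{p-1}_s$, as noted in the footnote of Lemma \ref{eventbound3}), and without loss of generality assume $v^{*\top}\Sigma_1 v^* > v^{*\top}\Sigma_2 v^*$, so that $v^{*\top}\Sigma_1 v^* - v^{*\top}\Sigma_2 v^* = \lambda_{\max}^s(\Sigma_1 - \Sigma_2)$. Since $v^{*\top}\Sigma_1 v^* \leq \|\Sigma_1\|_{\mathrm{op}}\leq \sigma^2$, this gives $v^{*\top}\Sigma_2 v^* \leq \sigma^2 - \lambda_{\max}^s(\Sigma_1 - \Sigma_2)$, and hence
\begin{align}
\sup_{v\in S^{p-1}_s}\frac{v^\top\Sigma_1 v}{v^\top\Sigma_2 v}\vee \frac{v^\top\Sigma_2 v}{v^\top\Sigma_1 v} \;\geq\; \frac{v^{*\top}\Sigma_1 v^*}{v^{*\top}\Sigma_2 v^*} \;=\; 1 + \frac{\lambda_{\max}^s(\Sigma_1-\Sigma_2)}{v^{*\top}\Sigma_2 v^*} \;\geq\; 1 + A.
\end{align}
Note that $A$ is well-defined and positive: the assumption $c\gamma>0$ together with $\Delta(A\wedge A^2)\geq c\gamma$ rules out $\lambda_{\max}^s(\Sigma_1 - \Sigma_2) \in \{0, \sigma^2\}$, and positive-definiteness of $\Sigma_1, \Sigma_2$ combined with the operator-norm bound forces $\lambda_{\max}^s(\Sigma_1 - \Sigma_2) < \sigma^2$.

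It then remains to show that the hypothesis $A\wedge A^2 \geq c\gamma/\Delta$ implies $A \geq (c\gamma/\Delta)\vee \sqrt{c\gamma/\Delta}$. This is the one short case analysis. If $c\gamma/\Delta\geq 1$, then $A\wedge A^2 \geq 1$ forces $A \geq 1$ (otherwise $A^2<A<1$), so $A\wedge A^2 = A$ and $A\geq c\gamma/\Delta\geq \sqrt{c\gamma/\Delta}$. If instead $c\gamma/\Delta<1$, then either $A\geq 1 > \sqrt{c\gamma/\Delta} \geq c\gamma/\Delta$, or $A<1$ in which case $A^2\geq c\gamma/\Delta$ yields $A\geq \sqrt{c\gamma/\Delta}\geq c\gamma/\Delta$. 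In every case $A\geq (c\gamma/\Delta)\vee\sqrt{c\gamma/\Delta}$, which combined with the previous display concludes the proof.

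I do not expect any real obstacle: the argument is a straightforward rearrangement, and the only mildly delicate point is making sure the denominator $v^{*\top}\Sigma_2 v^*$ is both positive and bounded above by $\sigma^2 - \lambda_{\max}^s(\Sigma_1 - \Sigma_2)$, which follows directly from positive-definiteness of $\Sigma_2$ and from $v^{*\top}\Sigma_1 v^* \leq \sigma^2$.
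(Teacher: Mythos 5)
Your proof is correct and follows essentially the same approach as the paper: pick an $s$-sparse maximizer $v^*$ and bound the variance ratio along that direction. The paper introduces an intermediate quantity $a$ with $\lambda^s_{\max}(\Sigma_1-\Sigma_2)\geq a\sigma^2$ and derives the ratio bound $1/(1-a)$, whereas you work directly with $A=\lambda^s_{\max}(\Sigma_1-\Sigma_2)/\{\sigma^2-\lambda^s_{\max}(\Sigma_1-\Sigma_2)\}$ and observe $1/(1-a)=1+A$; these are equivalent, and your bookkeeping is arguably slightly cleaner.
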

\begin{proof}
    Note first that \eqref{necessarylemma1} implies
    \begin{align}
        \lambda_{\max}^s (\Sigma_1 - \Sigma_2) \geq a \sigma^2 \label{necessaryproof1},
    \end{align}
    where $a = c\gamma (\Delta + c\gamma)^{-1} \vee (c\gamma)^{1/2} (\sqrt{\Delta} + \sqrt{c\gamma})^{-1}$. Since $S^{p-1}_s$ is a closed subset of the unit sphere $\mathcal{S}^{p-1}$, we can choose $v \in S^{p-1}_s$ such that $\lambda_{\max}^s (\Sigma_1 - \Sigma_2) = |v^\top (\Sigma_1 - \Sigma_2) v|$. Assume first that $v^\top (\Sigma_1 - \Sigma_2) v>0$. It then follows from Equation \eqref{necessaryproof1} that
        \begin{align}
       v^\top \Sigma_1 v  - v^\top \Sigma_2 v &\geq  a \sigma^2  .
        \intertext{In particular, since $\sigma^2 =\normmop{\Sigma_1}$ we have that}
        v^\top \Sigma_1 v  -a v^\top \Sigma_2 v &\geq a \normmop{\Sigma_1} \\
        &\geq a v^\top \Sigma_1 v.
        \intertext{Rearranging, we get}
        \frac{v^\top \Sigma_1 v}{v^\top \Sigma_2 v} &\geq \frac{1}{1-a}.
    \intertext{Similarly, if $v^\top (\Sigma_1 - \Sigma_2) v <0 $, we have}
        \frac{v^\top \Sigma_2 v}{v^\top \Sigma_1 v} &\geq \frac{1}{1-a} .
    \end{align}
    Inserting for $a$, we obtain the result. 
    
\end{proof}

In the following, we let $\mathrm{Sym}(p)$ denote the set of symmetric matrices in $\RR^{p\times p}$. We also let $\PSD(p)$ denote the set of positive semi-definite matrices in $\RR^{p\times p}$. 

The following Lemma is a formalization of results in Section 5 in \cite{bach2010convex}. 
\begin{lemma}\label{duallemma}
    Let $A \in \RR^{p\times p}$ be a symmetric matrix. Then we have
    \begin{align}
      \underset{\substack{Z \in N(p,s) }}{\sup} \ \mathrm{Tr}(AZ)  &\leq \underset{Y \in \mathrm{Sym}(p)}{\inf} \ \underset{\substack{Z \in \PSD(p)\\ \Tr(Z)=1}}{\sup} \Tr Z(A+Y) + s \normm{Y}_{\infty}\\
    &\leq  \underset{Y \in \mathrm{Sym}(p)}{\inf} \ \lambda_{\max}(A+Y) + s \normm{Y}_{\infty},
\end{align}
where $ N(p,s) = \{Z \in \PSD(p) \ ; \ \mathrm{Tr}(Z)=1, \normm{Z}_1\leq s\}$ and $\lambda_{\max}(\cdot) = \lambda_{\max}^p(\cdot)$ from \eqref{firsteigsparse}. 
\end{lemma}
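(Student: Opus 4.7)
The plan is to establish this as a weak-duality chain, introducing a Lagrange multiplier $Y$ for the nonconvex-looking constraint $\lVert Z \rVert_1 \leq s$ in the definition of $N(p,s)$, and then evaluating the resulting inner supremum over the PSD spectrahedron.

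For the first inequality, I would fix any symmetric $Y \in \RR^{p \times p}$ and any $Z \in N(p,s)$, and use the trace inner product bound
\begin{align}
| \Tr(Z Y) | \;=\; \Bigl| \sum_{i,j} Z_{ij} Y_{ij} \Bigr| \;\leq\; \lVert Z \rVert_1 \lVert Y \rVert_{\infty} \;\leq\; s \lVert Y \rVert_{\infty},
\end{align}
where the final bound uses $\lVert Z \rVert_1 \leq s$. This yields $-\Tr(ZY) \leq s \lVert Y \rVert_{\infty}$, hence
\begin{align}
\Tr(AZ) \;=\; \Tr\bigl(Z(A+Y)\bigr) - \Tr(ZY) \;\leq\; \Tr\bigl(Z(A+Y)\bigr) + s \lVert Y \rVert_{\infty}.
\end{align}
Since $Z$ also satisfies $Z \succcurlyeq 0$ and $\Tr(Z) = 1$, I can relax the $\ell_1$ constraint on the right-hand side and take the sup over $\{Z \in \PSD(p) : \Tr(Z) = 1\}$, then take the infimum over $Y \in \mathrm{Sym}(p)$. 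This gives the first inequality.

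For the second inequality, I would use the standard variational characterization of the largest eigenvalue of a symmetric matrix $M = A + Y \in \mathrm{Sym}(p)$,
\begin{align}
\underset{\substack{Z \in \PSD(p) \\ \Tr(Z) = 1}}{\sup} \; \Tr(ZM) \;=\; \lambda_{\max}(M),
\end{align}
which follows from $\Tr(ZM) \leq \lambda_{\max}(M)\Tr(Z) = \lambda_{\max}(M)$ by the PSD ordering, with equality attained at $Z = v v^\top$ for a top unit eigenvector $v$ of $M$. Substituting this into the middle expression and then taking the infimum over symmetric $Y$ yields the stated bound.

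There is no genuine obstacle here, as the argument is essentially weak Lagrangian duality combined with a textbook SDP representation of $\lambda_{\max}$; the only point requiring a little care is verifying that the Hölder-type bound $|\Tr(ZY)| \leq \lVert Z \rVert_1 \lVert Y \rVert_{\infty}$ applies to the entrywise $\ell_1$ and $\ell_\infty$ norms used in the definition of $N(p,s)$ (i.e., treating $Z$ and $Y$ as vectors in $\RR^{p^2}$), rather than to any operator or Schatten norms.
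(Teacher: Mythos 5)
Your proof is correct and establishes the same chain of inequalities, but by a shorter and more direct route than the paper's. The paper factors the weak-duality argument into several steps, following the exposition in the cited reference: it first introduces a scalar Lagrange multiplier $\lambda \geq 0$ for the constraint $\normm{Z}_1 \leq s$, then rewrites $-\normm{Z}_1 = \inf_{\normm{Y}_\infty\leq 1}\Tr(ZY)$ via the entrywise $\ell_1$/$\ell_\infty$ duality, and finally reabsorbs $\lambda$ into a symmetric matrix $Y$ before commuting the resulting supremum and infimum. Your argument collapses these steps into one pointwise observation: for any $Y\in\mathrm{Sym}(p)$ and any $Z\in N(p,s)$, the entrywise H\"{o}lder bound gives $-\Tr(ZY)\leq\normm{Z}_1\normm{Y}_\infty\leq s\normm{Y}_\infty$, hence $\Tr(AZ)\leq\Tr\{Z(A+Y)\}+s\normm{Y}_\infty$; relaxing the $\ell_1$ constraint to the spectrahedron and applying the standard SDP identity $\sup_{Z\succcurlyeq 0,\,\Tr Z=1}\Tr(ZM)=\lambda_{\max}(M)$ then yields both inequalities after taking the supremum over $Z$ and infimum over $Y$. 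Both arguments use the same matrix $Y$ as the Lagrange multiplier; the paper's longer decomposition traces the derivation of the convex-relaxation dual in \cite{bach2010convex} explicitly, while yours is more economical and self-contained. Your caution about the entrywise (not Schatten) interpretation of $\normm{\cdot}_1$ and $\normm{\cdot}_\infty$ is warranted and matches the paper's conventions.
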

Note that the result in \cite{bach2010convex} is slightly stronger as equalities prevail throughout in that paper. Since they do not prove the result,  we prove the slightly weaker Lemma \ref{duallemma} here. 
\begin{proof}
    We show the following chain of inequalities in steps: 
    \begin{align}
         \underset{\substack{Z \in N(p,s) }}{\sup} \ \mathrm{Tr}(AZ) &\leq \underset{\lambda\geq 0}{\inf} \ \underset{\substack{Z \in \PSD(p) \\ \Tr(Z)=1}}{\sup} \ \Tr(AZ)  + \lambda(s - \normm{Z}_1) \label{firsteq}\\
         &= \underset{\lambda\geq 0}{\inf} \  \underset{\substack{Z \in \PSD(p) \\ \Tr(Z)=1}}{\sup} \underset{\substack{Y \in \mathrm{Sym}(p)\\ \normm{Y}_{\infty}\leq 1}}{\inf} \ \Tr Z (A + \lambda Y) + s\lambda \label{secondeq}\\
         &\leq \underset{Y \in \mathrm{Sym}(p)}{\inf} \ \underset{\substack{Z \in \PSD(p) \\ \Tr(Z)=1}}{\sup} \Tr Z(A+Y) + s \normm{Y}_{\infty} \label{thirdeq}\\
         &=  \underset{Y \in \mathrm{Sym}(p)}{\inf} \ \lambda_{\max}(A + Y) + s \normm{Y}_{\infty} \label{fourtheq}.
    \end{align}

    \textbf{Step 1.} We begin by showing \eqref{firsteq}. For any $Z \in N(p,s)$ we have $Z \in \PSD(p)$, $\Tr(Z)=1$ and $\normm{Z}_1 \leq s$. It follows $\lambda(s - \normm{Z}_1)\geq 0$ for any $\lambda\geq 0$. Since we also have that $Z \in \PSD(p)$ and $\Tr(Z)=1$, we obtain
    \begin{align}
         \underset{\substack{Z \in N(p,s) }}{\sup} \ \mathrm{Tr}(AZ) &\leq \underset{\substack{Z \in \PSD(p) \\ \Tr(Z)=1}}{\sup} \ \Tr(AZ)  + \lambda(s - \normm{Z}_1),
    \end{align}
    for any $\lambda \geq 0$. Now take infimum over all $\lambda\geq 0$ on both sides to obtain \eqref{firsteq}.

    \textbf{Step 2.} To show \eqref{secondeq} it suffices to prove that 
    \begin{align}
        \Tr(AZ)  + \lambda(s - \normm{Z}_1) = \underset{\substack{Y \in \mathrm{Sym}(p)\\ \normm{Y}_{\infty}\leq 1}}{\inf} \ \Tr Z (A + \lambda Y) + s\lambda,\label{tempeq2}
    \end{align}
    for any $\lambda \geq 0$ and $Z \in \PSD(p)$ with $\Tr(Z)=1$. To this end, it suffices to show that 
    \begin{align}
        \underset{\substack{Y \in \mathrm{Sym}(p)\\ \normm{Y}_{\infty}\leq 1}}{\inf} \Tr(ZY) = - \normm{Z}_1.\label{tempeq3}
    \end{align}
    Note first that $ - \normm{Z}_1 = \Tr(Z\widehat{Y})$, where $\widehat{Y}$ is given by $\widehat{Y}_{i,j} = -\mathrm{sgn}(Z_{i,j})$. In particular, $\widehat{Y}$ is symmetric and $\lVert \widehat{Y}\rVert_{\infty}\leq 1$. This shows that the right hand side of \eqref{tempeq3} is no smaller than the left hand side. Conversely, for any $Y$ satisfying $\normm{Y}_{\infty}\leq 1$, we have
    \begin{align}
        \Tr(ZY) &= \sum_{i\in [p]}\sum_{j\in [p]} Z_{i,j}Y_{j_i}\\
        &\geq - \sum_{i\in [p]}\sum_{j\in [p]} |Z_{i,j}|\\
        &= -\normm{Z}_1.
    \end{align}
    Taking an infimum over all such $Y$, we obtain \eqref{tempeq3}.

    \textbf{Step 3.} We now show \eqref{thirdeq}. Fix any $\lambda \geq 0$. For any $Y \in \mathrm{Sym}(p)$ with $\normm{Y}_{\infty}=\lambda$, we have
    \begin{align}
        \underset{\substack{Z \in \PSD(p) \\ \Tr(Z)=1}}{\sup} \Tr Z(A+Y) + s \normm{Y}_{\infty} &=  \underset{\substack{Z \in \PSD(p) \\ \Tr(Z)=1}}{\sup} \Tr Z(A+Y) + s \lambda\\
        &=  \underset{\substack{Z \in \PSD(p) \\ \Tr(Z)=1}}{\sup} \Tr Z(A+\lambda Y) + s \lambda\\
        &\geq   \underset{\substack{Z \in \PSD(p) \\ \Tr(Z)=1}}{\sup}\ \underset{\substack{Y \in \mathrm{Sym}(p)\\ \normm{Y}_{\infty}\leq 1}}{\inf} \ \Tr Z(A+\lambda Y) + s \lambda.
    \end{align}
    It follows that 
    \begin{align}
        \underset{\substack{Y \in \mathrm{Sym}(p), \\ \normm{Y}_{\infty}=\lambda}}{\inf} \underset{\substack{Z \in \PSD(p) \\ \Tr(Z)=1}}{\sup} \Tr Z(A+Y) + s \normm{Y}_{\infty} &\geq \underset{\substack{Z \in \PSD(p) \\ \Tr(Z)=1}}{\sup} \ \underset{\substack{Y \in \mathrm{Sym}(p)\\ \normm{Y}_{\infty}\leq 1}}{\inf} \  \Tr Z(A+\lambda Y/\lambda) + s \lambda.
    \end{align}
    Taking an infimum over all $\lambda\geq 0$, we arrive at \eqref{thirdeq}.

    \textbf{Step 4.} The equality in \eqref{fourtheq} follows from the well known fact that
    \begin{align}
        \underset{\substack{Z \in \PSD(p) \\ \Tr(Z)=1}}{\sup} \Tr Z(Y) = \lambda_{\max} (Y) = \underset{v \in S^{p-1}}{\sup} \ v^\top Y v , \label{tempeq4}
    \end{align}
    for any symmetric $Y\in \RR^{p\times p}$, which we prove here for completeness. 
    
    Let $v \in S^{p-1}$. Then 
    \begin{align}
        v^\top Y v &= \Tr(v^\top Y v)\\
        &= \Tr(Y v v^\top).
    \end{align}
    Now let $Z = v v^\top$, which is positive semidefinite and satisfies $\Tr(Z) = \Tr(v v^\top) = v^\top v = 1$. Thus, 
    \begin{align}
        v^\top Y v \leq  \underset{\substack{Z \in \PSD(p) \\ \Tr(Z)=1}}{\sup} \ \Tr(YZ) 
    \end{align}
    Since $v$ was arbitrary, it follows that
    \begin{align}
        \underset{v \in S^{p-1}}{\sup} \ v^\top Y v \leq  \underset{\substack{Z \in \PSD(p) \\ \Tr(Z)=1}}{\sup} \ \Tr(YZ).
    \end{align}

    Conversely, let $Z \in \PSD(p)$ satisfy $\Tr(Z)=1$. Since $Z$ is symmetric, an eigendecomposition yields $Z = Q \Delta Q^\top$, where $Q = [q_1, \ldots, q_p]$ some matrix in $\RR^{p \times p}$ with orthonormal columns, and $\Delta = \mathrm{Diag}(\lambda_1(Z), \ldots, \lambda_p(Z))$ is a diagonal matrix with the eigenvalues of $Z$ on its diagonal. We have
    \begin{align}
        \Tr(ZY) &= \Tr(YZ)\\
        &= \Tr( Y Q\Delta Q^\top)\\
        &= \sum_{i=1}^p \lambda_i(Z)  \Tr (Y q_i q_i^\top)\\
        &= \sum_{i=1}^p \lambda_i(Z)  \Tr (q_i^\top Y q_i)\\
        &\leq \sum_{i=1}^p \lambda_i(Z)  \lambda_{\max}(Y)\\
        &\leq \lambda_{\max}(Y),
    \end{align}
    where we in the penultimate inequality used that $\lambda_i(Z)\geq 0$ for $i \in [p]$ and the fact that $\sum_{i=1}^p \lambda_i(Z) = \Tr(Z) = 1$. Taking a supremum, we obtain
    \begin{align}
        \underset{\substack{Z \in \PSD(p) \\ \Tr(Z)=1}}{\sup} \ \Tr(YZ) \leq \underset{v \in S^{p-1}}{\sup} \ v^\top Y v,
    \end{align}
    and we are done.

\end{proof}

\begin{lemma}\label{tullelemma}
Let $\Sigma \in \RR^{p\times p}$ be a symmetric positive definite matrix and let $\lambda_{\max}^s(\cdot)$ be defined as in \eqref{firsteigsparse}. Then $\lambda_{\max}^s(\Sigma)$ is non-decreasing in $s$, and for any $s_0/2 \leq s\leq s_0\leq p $ we have $\lambda_{\max}^{s_0}(\Sigma) \leq 4 \lambda_{\max}^{s}(\Sigma)$. Moreover, if $\Sigma_1, \Sigma_2 \in \RR^{p\times p}$ are two symmetric positive definite matrices, then $\lambda_{\max}^s(\Sigma_1  -\Sigma_2) \leq \lambda_{\max}^s(\Sigma_1) \vee \lambda_{\max}^s (\Sigma_2)$ for any $s \in [p]$.
\end{lemma}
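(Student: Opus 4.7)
The plan is to dispatch the three claims in the order stated, each by a short direct argument. The monotonicity of $\lambda_{\max}^s(\Sigma)$ in $s$ is essentially a tautology: since $S^{p-1}_s \subseteq S^{p-1}_{s+1}$, the defining supremum is taken over a set that only grows with $s$, so no real work is needed.

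For the factor-$4$ comparison when $s_0/2 \leq s \leq s_0$, my plan is to pick a maximizer $v^* \in S^{p-1}_{s_0}$ (which exists by compactness of $S^{p-1}_{s_0}$), split its support $I$ into two disjoint subsets $I_1, I_2$ of cardinality at most $\lceil s_0/2 \rceil \leq s$, and set $v_j$ to be the restriction of $v^*$ to $I_j$. Then $v^* = v_1 + v_2$ with $\|v_1\|^2 + \|v_2\|^2 = 1$, and by the Cauchy--Schwarz inequality applied in the $\Sigma$-inner product (valid since $\Sigma$ is positive definite),
\begin{align}
v^{*\top}\Sigma v^* \leq \bigl(\sqrt{v_1^\top \Sigma v_1} + \sqrt{v_2^\top \Sigma v_2}\bigr)^2 \leq 2(v_1^\top \Sigma v_1 + v_2^\top \Sigma v_2).
\end{align}
Hence at least one $v_j$ satisfies $v_j^\top \Sigma v_j \geq v^{*\top}\Sigma v^*/4$; after normalizing $u_j = v_j/\|v_j\|$ (which is possible since that $v_j$ is nonzero, and which lies in $S^{p-1}_s$), we obtain $u_j^\top \Sigma u_j \geq v_j^\top \Sigma v_j$ because $\|v_j\| \leq 1$. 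This yields $\lambda_{\max}^s(\Sigma) \geq \lambda_{\max}^{s_0}(\Sigma)/4$.

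The third claim is the easiest: for any $v \in S^{p-1}_s$, both $v^\top \Sigma_1 v$ and $v^\top \Sigma_2 v$ are nonnegative because $\Sigma_1, \Sigma_2$ are positive definite, and the elementary inequality $|a-b| \leq \max(a,b)$ valid for $a,b \geq 0$ gives $|v^\top(\Sigma_1 - \Sigma_2)v| \leq \lambda_{\max}^s(\Sigma_1) \vee \lambda_{\max}^s(\Sigma_2)$; taking a supremum over $v \in S^{p-1}_s$ closes the argument.

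The only non-cosmetic step is the factor-$4$ bound, and the main (mild) obstacle there is to make sure the Cauchy--Schwarz step is justified and that the support-splitting actually produces subsets of size $\leq s$; the constraint $s \geq s_0/2$ is used precisely once, to guarantee $\lceil s_0/2 \rceil \leq s$, so no additional case analysis on the parity of $s_0$ is needed.
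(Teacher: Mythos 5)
Your proof is correct and takes essentially the same approach as the paper: the paper also splits an $s_0$-sparse unit vector into two $s$-sparse pieces and bounds $\sqrt{\lambda_{\max}^{s_0}(\Sigma)} = \sup_v \lVert \Sigma^{1/2}v\rVert_2 \le \lVert \Sigma^{1/2}v_1\rVert_2 + \lVert \Sigma^{1/2}v_2\rVert_2 \le 2\sqrt{\lambda_{\max}^s(\Sigma)}$, which is the Minkowski form of the Cauchy--Schwarz step you use. Your rephrasing via $(a+b)^2\le 2(a^2+b^2)$, pigeonhole on the larger of the two pieces, and explicit normalization by $\lVert v_j\rVert\le 1$ is just a reorganization of that same computation, and your treatments of monotonicity and of the third inequality $|v^\top(\Sigma_1-\Sigma_2)v|\le \max(v^\top\Sigma_1 v, v^\top\Sigma_2 v)$ match the paper's.
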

\begin{proof}
    For the first claim,  note that $S^{p-1}_s$ is an increasing set in $s$. The first claim then follows immediately. For the second claim, note that since $\Sigma$ is symmetric and positive definite, we have
    \begin{align}
        \lambda_{\max}^{s_0}(\Sigma) &= \underset{v \in S_{s_0}^{p-1}}{\sup} \ |v^\top \Sigma v |\label{normmopdef}\\
        &= \underset{v \in S_{s_0}^{p-1}}{\sup} \ v^\top \Sigma v\\
        &=\underset{v \in S_{s_0}^{p-1}}{\sup}  \normm{  \Sigma^{1/2} v}_2^2
    \end{align}
    for all $s_0$, where $\Sigma^{1/2}$ is a symmetric positive definite matrix satisfying $\Sigma^{1/2} \Sigma^{1/2} = \Sigma$. It follows that
    \begin{align}
        \sqrt{\lambda_{\max}^{s_0}(\Sigma)} &= \underset{v \in S_{s_0}^{p-1}}{\sup}    \normm{  \Sigma^{1/2} v}_2.
    \end{align}
    For any $v \in S_{s_0}^{p-1}$, we may write $v = v_1 + v_2$, where $v_1, v_2 \in S_s^{p-1}$ since $s \geq s_0/2$. Hence for any $v \in S_{s_0}^{p-1}$, it follows that
    \begin{align}
         \normm{  \Sigma^{1/2} v}_2 &\leq   \normm{  \Sigma^{1/2} v_1}_2 +  \normm{  \Sigma^{1/2} v_2}_2\\
         &\leq 2 \underset{v_s \in S_{s}^{p-1}}{\sup}  \normm{  \Sigma^{1/2} v_s}_2 \\
         &= 2 \sqrt{\lambda_{\max}^s(\Sigma)}.
    \end{align}
    As $v$ was arbitrary, the second claim follows. 

    For the third claim, note that 
    \begin{align}
        \lambda_{\max}^s (\Sigma_1 - \Sigma_2) &= \underset{s \in S^{p-1}_s}{\sup} | v^\top (\Sigma_1 - \Sigma_2)v|\\
        &= \underset{s \in S^{p-1}_s}{\sup}  \left\{  v^\top (\Sigma_1 - \Sigma_2)v  \vee v^\top (\Sigma_2 - \Sigma_1)v \right\}.
    \end{align}
    Since $\Sigma_1$ and $\Sigma_2$ are both positive definite, both $v^\top \Sigma_1 v$ and $v^\top \Sigma_2 v$ are positive. It follows that 
    \begin{align}
        \lambda_{\max}^s (\Sigma_1 - \Sigma_2) &\leq \underset{s \in S^{p-1}_s}{\sup}  \left\{  v^\top\Sigma_1v  \vee v^\top \Sigma_2 v \right\}
        \\ &= \lambda_{\max}^s(\Sigma_1) \vee \lambda_{\max}^s (\Sigma_2),
    \end{align}
    and the proof is complete.
\end{proof}

\begin{lemma}[\citeauthor{liu_minimax_2021} \citeyear{liu_minimax_2021}, Supplementary Material, Lemma 8]\label{minimaxlemma}
Let $\Theta_0$ and $\Theta_1$ denote general parameter spaces, and consider a family of distributions $\{\PP_{\theta}\}_{\theta \in \Theta}$ on $\RR^p$, where $\Theta = \Theta_0 \cup \Theta_1$. Let $\nu_0$ and $\nu_1$ be two distributions supported on $\Theta_0$ and $\Theta_1$ respectively. For $r \in \{0,1\}$, define $\mathbb{Q}_r$ to be the marginal distribution of the random variable $X$ generated hierarchically according to $\theta \sim \nu_r$ and $X|\theta \sim \PP_{\theta}$. Then
\begin{align}
    \underset{\psi \in \Psi}{\inf} \left[   \underset{\theta \in \Theta_0}{\sup} \ \EE_{\theta} \psi(X) + \underset{\theta \in \Theta_1}{\sup}\ \EE_{\theta} \left\{ 1- \psi(X)\right\}     \right] &\geq \max \left\{ \frac{1}{2}\exp(-\alpha), 1 - \sqrt{\frac{\alpha}{2}} \right\},
\end{align}
where $\Psi$ is the set of measurable functions $\psi : \RR^p \mapsto \{0,1\}$ and $\alpha = \chi^2(\mathbb{Q}_0 \ \| \ \mathbb{Q}_1)$.
    
\end{lemma}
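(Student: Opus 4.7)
The plan is to follow the Le~Cam / $\chi^2$-divergence recipe used in the proof of Proposition~\ref{univariatelower}, but now with a prior over $\Theta(p,n,s,\sigma,\rho)$ that randomizes over both the changepoint location (which will deliver the $\log\log(8n)$ term) and the direction of the rank-one perturbation (which will deliver the $s\log(ep/s)$ term). First I would reduce to Gaussian data: since $w\ge (2\pi)^{-1/2}$ and $u$ is sufficiently large, the measure $\mathrm{N}_p(0,\Sigma)$ belongs to $\mathcal{P}(u)$ for any admissible $\Sigma$, so by Lemma~\ref{minimaxlemma} it suffices to exhibit a prior $\nu$ supported in $\Theta(p,n,s,\sigma,c\,\rho_\star)$, where $\rho_\star = s\log(ep/s)\vee \log\log(8n)$, such that the chi-square divergence between the null density $f_0 = \phi_{\sigma^2 I_{np}}$ and the Gaussian mixture $f_1 = \mathbb{E}_{V\sim\nu}\phi_V$ can be made smaller than any fixed $\eta(\delta)>0$ by choosing $c$ small.

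Next I would construct $\nu$ by independently sampling: (i) $\Delta = 2^l$ with $l$ uniform on $\{0,1,\ldots,\lfloor\log_2(n/2)\rfloor\}$; (ii) a uniformly random subset $I\subseteq[p]$ of size $s$; and (iii) $u\in S^{p-1}_s$ with $u(i)\iid \mathrm{Unif}\{\pm s^{-1/2}\}$ on $I$ and zero off $I$. Given $(\Delta,u)$, set $V = \mathrm{Diag}(\{V_j\}_{j=1}^n)$ with $V_j = \sigma^2 I - \kappa uu^\top$ for $j\le\Delta$ and $V_j=\sigma^2 I$ otherwise, where $\kappa=\kappa(\Delta)$ is chosen piecewise (linear vs.\ quadratic in $\Delta/\rho_\star$) to make the signal strength in \eqref{altparammulti} exactly $c\rho_\star$, giving $\operatorname{supp}(\nu)\subseteq \Theta(p,n,s,\sigma,c\rho_\star)$.

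To bound $\chi^2(f_1,f_0)+1 = \mathbb{E}_{(V_1,V_2)\sim\nu\otimes\nu}\mathbb{E}_{X\sim f_0}\{\phi_{V_1}(X)\phi_{V_2}(X)/\phi_{\sigma^2 I}^2(X)\}$, I would apply Lemma~\ref{lemmachisqbound}, which for rank-one perturbations of $\sigma^2 I$ yields a bound of the form
\begin{align}
\chi^2(f_1,f_0)+1 &\le \mathbb{E}\exp\!\left[\tfrac{1}{2}\langle u_1,u_2\rangle^2 \Bigl\{\sqrt{\tfrac{\Delta_-}{\Delta_+}}(\Delta_+\alpha_+^2)^{1/2}(\Delta_-\alpha_-^2)^{1/2}\wedge \Delta_-\alpha_-\Bigr\}\right].
\end{align}
By the choice of $\kappa$, $\Delta_-\alpha_- = c\rho_\star$ whenever $\Delta_-\le c\rho_\star$, and $\Delta_i\alpha_i^2 = c\rho_\star$ otherwise. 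In the regime where $\rho_\star = \log\log(8n)$, the inner-product factor $\langle u_1,u_2\rangle^2\le 1$ reduces the bound to exactly the univariate case, and the same geometric-grid argument as in the proof of Proposition~\ref{univariatelower} (splitting $|l_1-l_2|$ into zero, small, and large) controls the divergence by choosing $c$ small.

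The main obstacle is the regime $\rho_\star = s\log(ep/s)$. Here the bound collapses to $\mathbb{E}\exp\{\tfrac{c}{2s}\log(ep/s)\cdot s^2\langle u_1,u_2\rangle^2\}$, and the key observation is that $s\langle u_1,u_2\rangle$ is distributed as $\sum_{i=1}^H R_i$ with independent Rademachers $R_i$ and an independent $H\sim\mathrm{Hyp}(p,s,s)$ controlling the overlap $|I_1\cap I_2|$. Classical hypergeometric tail bounds give $H\le s^2/p + t$ with Gaussian-like probability $e^{-ct^2/s}$, and conditional on $H$ the Rademacher MGF gives $\mathbb{E}\exp(\lambda(\sum R_i)^2)\lesssim (1-2\lambda H)^{-1/2}$. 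I would invoke Lemma~\ref{rademacherlemma} (already cited in the proof of Proposition~\ref{univariatelower}'s multivariate analogue) to obtain $\mathbb{E}\exp\{(c/s)\log(ep/s)\cdot (s\langle u_1,u_2\rangle)^2\}\le 1+\eta/2$ provided $c$ is taken small enough depending on $\eta$. Combining the two regimes, taking $c_\delta$ to be the minimum of the constants produced, and applying Lemma~\ref{minimaxlemma} completes the proof.
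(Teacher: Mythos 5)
Your proposal does not prove the statement you were asked to prove. Lemma \ref{minimaxlemma} is the abstract Le~Cam-type reduction itself: for arbitrary parameter spaces $\Theta_0,\Theta_1$ and arbitrary priors $\nu_0,\nu_1$, the minimax testing error is bounded below by $\max\{\tfrac12 e^{-\alpha},\,1-\sqrt{\alpha/2}\}$ with $\alpha=\chi^2(\QQ_0\,\|\,\QQ_1)$. What you have written is instead a proof sketch of Proposition \ref{multivariatelowerbound} (the multivariate changepoint lower bound), and it explicitly \emph{invokes} Lemma \ref{minimaxlemma} as a black box ("by Lemma \ref{minimaxlemma} it suffices to exhibit a prior $\nu$ \ldots"). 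Using the lemma to prove the lemma is circular; none of the content of your argument (the construction of the prior over $\Delta$ and $u$, Lemma \ref{lemmachisqbound}, the hypergeometric/Rademacher bound) bears on the statement at hand. In the paper the lemma is not proved at all but cited from \cite{liu_minimax_2021}, so the relevant comparison is against the standard argument, which your proposal does not attempt.

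For the record, the missing argument is short: the worst-case risk over $\Theta_0\cup\Theta_1$ dominates the Bayes risk under $\nu_0\otimes\nu_1$, i.e.
\begin{align}
\underset{\psi}{\inf}\left[\underset{\theta\in\Theta_0}{\sup}\EE_\theta\psi+\underset{\theta\in\Theta_1}{\sup}\EE_\theta(1-\psi)\right]
\;\geq\;\underset{\psi}{\inf}\left[\EE_{\QQ_0}\psi+\EE_{\QQ_1}(1-\psi)\right]
\;=\;1-\mathrm{TV}(\QQ_0,\QQ_1),
\end{align}
and one then converts total variation to chi-square divergence two ways: $\mathrm{TV}(\QQ_0,\QQ_1)\leq\sqrt{\mathrm{KL}(\QQ_0\,\|\,\QQ_1)/2}\leq\sqrt{\alpha/2}$ (Pinsker plus $\mathrm{KL}\leq\log(1+\chi^2)\leq\chi^2$) gives the $1-\sqrt{\alpha/2}$ branch, while the Bretagnolle--Huber-type inequality $1-\mathrm{TV}\geq\tfrac12\exp(-\mathrm{KL})\geq\tfrac12\exp(-\alpha)$ gives the other. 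Without some version of this chain, your submission leaves the actual statement unproved.
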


\begin{lemma}[\citeauthor{cai_optimal_2015} \citeyear{cai_optimal_2015}, Lemma 1]\label{rademacherlemma}
Let $p \in \NN$ and $s \in [p]$. Let $R_1, \ldots, R_s$ be independently Rademacher distributed. Denote the symmetric random walk on $\ZZ$ stopped at the $h$-th step by 
\begin{align}
    G(h) = \sum_{i=1}^h R_i.
\end{align}
Let $H\sim \text{Hyp}(p,s,s)$ be independent of $\{R_i\}_{i \in [s]}$. Then there exists a function $g \ : \ (0, 1/36) \mapsto (1, \infty)$ with ${\lim}_{x \downarrow 0 } \ {g(x)} = 1$ such that for any $a < 1/36$, 
\begin{align}
\EE \exp \left\{   \frac{a}{s}\log \left( \frac{ep}{s}  \right)  G^2(H)  \right\}\leq g(a).
\end{align}
\end{lemma}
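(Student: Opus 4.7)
The plan is to condition on $H$ and exploit the sub-Gaussianity of Rademacher sums, then integrate against the hypergeometric law of $H$. Conditional on $H = h$, the variable $G(h) = \sum_{i=1}^h R_i$ is sub-Gaussian with variance proxy $h$, so Khintchine's inequality (or equivalently the Gaussian-integration identity $e^{cx^2} = \EE_Z e^{\sqrt{2c}\,Z x}$ combined with $\cosh(y) \leq e^{y^2/2}$) yields the conditional MGF bound
\begin{align}
\EE \exp\{c\, G(h)^2\} \leq (1 - 2ch)^{-1/2}, \qquad \text{valid for } 2ch < 1.
\end{align}
With $c = (a/s)\log(ep/s)$ and $L := \log(ep/s)$, this bound is only usable on the event $\{H \leq s/(4aL)\}$, since $H$ can be as large as $s$.

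To obtain a global bound, I would split $\EE \exp\{cG(H)^2\}$ at the threshold $T = \lfloor s/(4aL)\rfloor$. On $\{H \leq T\}$ we have $2cT \leq 1/2$, so the conditional MGF bound contributes at most $\sqrt{2}$. On $\{H > T\}$ use the deterministic bound $G(h)^2 \leq h^2$ and a Chernoff-type tail bound for the hypergeometric distribution. By Hoeffding's convex-ordering theorem, $\EE \phi(H) \leq \EE \phi(\widetilde H)$ for any convex $\phi$ where $\widetilde H \sim \mathrm{Bin}(s, s/p)$; optimizing the binomial Chernoff bound then gives
\begin{align}
\PP(H \geq h) \leq \bigl(e s^2/(hp)\bigr)^h \qquad \text{for } h \geq s^2/p.
\end{align}

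The tail contribution is therefore controlled by $\sum_{h > T} (e s^2/(hp))^h \exp(a h^2 L/s)$. Writing $r = h/s$, the logarithm of the summand equals $h\bigl[2 - L(1-ar) - \log r\bigr]$. On $r \in (1/(4aL), 1]$ this function is convex in $r$ and hence attains its maximum at an endpoint; a direct computation shows the maximum is negative and of order $-s/(4a)$ whenever $a < 1/36$. Summing yields a tail contribution bounded by a quantity depending only on $a$, and combining with the $\sqrt{2}$ piece defines the function $g(a)$. Continuity of $g$ and the limit $g(a) \to 1$ as $a \downarrow 0$ then follow from dominated convergence, since $T \to \infty$ and the MGF factor $(1 - 2cT)^{-1/2} \to 1$ while the hypergeometric tail vanishes.

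The main obstacle is the quantitative interplay in the regime $p \gg s$, where $L$ is large. The MGF bound only holds for $h$ much smaller than $s$, while the worst-case bound $G(h)^2 \leq h^2$ together with $c = aL/s$ gives $\exp\{c G(h)^2\} \leq (ep/s)^{as}$, a potentially enormous factor. The threshold $a < 1/36$ is precisely what makes the hypergeometric decay $\bigl(s^2/(hp)\bigr)^h$ beat this exponential blow-up uniformly in $s,p$; extracting the explicit constant $1/36$ requires carefully tracking the endpoint exponents above, and is likely where the sharpest numerical work of the argument must be done.
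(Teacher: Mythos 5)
The paper does not prove this lemma; it is imported verbatim as Lemma~1 of \cite{cai_optimal_2015}, so there is no in-text argument to compare against. Evaluating your proposal on its own terms: the high-level strategy (condition on $H$, use the Gaussian-integration MGF bound $\EE e^{cG(h)^2}\le (1-2ch)^{-1/2}$ on the bulk, Hoeffding's hypergeometric--binomial comparison plus Chernoff on the tail) is the right one, but two of the supporting claims do not hold as stated.

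First, the convexity claim fails. Writing the per-unit exponent as $g(r)=r\bigl[2-L(1-ar)-\log r\bigr]$, a direct computation gives $g''(r)=2La-1/r$, which changes sign at $r^*=1/(2La)$. Since your interval is $(1/(4aL),1]$ and $r^*=2/(4aL)$, the inflection point lies strictly inside that interval whenever $L\ge 1/(2a)$, which is precisely the regime in which the tail is nonempty and matters. So $g$ is concave on a nontrivial left portion of the interval, and you cannot conclude that the maximum is attained at an endpoint. Negativity of $g$ on the interval (for $L>1/(4a)$ and $a<1/36$) can still be shown --- by analysing the bracket $b(r)=2-L-\log r+aLr$, whose stationary point is at $r=1/(aL)$, and checking signs at the relevant endpoints --- but this requires a separate sign argument, not convexity.

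Second, and more consequentially, the mechanism you offer for $\lim_{a\downarrow 0}g(a)=1$ does not work. By construction $cT=\frac{aL}{s}\lfloor s/(4aL)\rfloor$ is pinned near $1/4$ whenever $T\ge 1$, so $(1-2cT)^{-1/2}$ does not tend to $1$ as $a\downarrow 0$; it stays near $\sqrt 2$. Your bound for the bulk piece is therefore a valid uniform constant but cannot serve as a $g(a)$ that tends to $1$. The dominated-convergence appeal only gives pointwise convergence for each fixed $(p,s)$, whereas the lemma asserts a bound $g(a)$ that is uniform over $(p,s)$. To obtain the limit, you must not collapse the bulk to the worst case $H=T$. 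Instead integrate the conditional bound against the law of $H$: using $(1-x)^{-1/2}\le e^{x}$ for $x\le 1/2$ reduces the bulk to $\EE e^{2cH}$, which via Hoeffding's comparison to $\widetilde H\sim\mathrm{Bin}(s,s/p)$ and the observation that $c\,s^2/p=a(1+\log(p/s))/(p/s)\le a$ uniformly in $(p,s)$ is $1+O(a)$ uniformly. Combined with a vanishing tail, this yields $g(a)\to 1$. The threshold $1/36$ then arises from the sharpest constants in the tail comparison, which your sketch gestures at but does not pin down.
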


\section{Acknowledgments}
The author wishes to express sincere appreciation to Ingrid Kristine Glad and Martin Tveten for their feedback and enduring support.

\bibliography{main}

\end{document}